\documentclass[reqno, 12pt]{amsart}
\usepackage{a4wide}
\usepackage{amscd}
\input xy
\xyoption{all}

\setlength{\textheight}{23cm} \setlength{\textwidth}{16cm}
\setlength{\oddsidemargin}{0cm} \setlength{\topmargin}{0cm}

\newtheorem{thm}{Theorem}[section]

\newtheorem{prop}[thm]{Proposition}

\newtheorem{lm}[thm]{Lemma}

\newtheorem{tb}[thm]{Table}

\numberwithin{equation}{section}

\hyphenation{Rie-mann-ian action Grass-mann-ians iso-metry either}

\begin{document}

\title{Hypersurfaces in the noncompact Grassmann manifold $SU_{2,m}/S(U_2U_m)$}
\author{\textsc{J\"{u}rgen Berndt and Young Jin Suh}}

\address{Department of Mathematics \\ King's College London\\ Strand\\ London\\ WC2R 2LS \\ United Kingdom}
%\email{j.berndt@ucc.ie}
\address{Department of Mathematics \\ College of Natural Sciences\\ Kyungpook National University \\
Daegu 702-701\\ Republic of Korea}
%\email{yjsuh@knu.ac.kr}
\date{}

\begin{abstract}
The Riemannian symmetric space $SU_{2,m}/S(U_2U_m)$, $m \geq 2$, is both Hermitian
symmetric and quaternionic K\"{a}hler symmetric. Let $M$ be a
hypersurface in $SU_{2,m}/S(U_2U_m)$ and denote by $TM$ its tangent bundle.
The complex structure of $SU_{2,m}/S(U_2U_m)$ determines a maximal
complex subbundle ${\mathcal C}$ of $TM$, and the quaternionic
structure of $SU_{2,m}/S(U_2U_m)$ determines a maximal quaternionic
subbundle ${\mathcal Q}$ of $TM$. In this article we investigate
hypersurfaces in $SU_{2,m}/S(U_2U_m)$ for which
${\mathcal C}$ and ${\mathcal Q}$ are closely related to the shape of $M$.
\end{abstract}

\maketitle
\thispagestyle{empty}

\footnote[0]{The second author was supported by grant Proj. No. R17-2008-001-01001-0 from NRF\\
2000 \textit{Mathematics Subject Classification}: Primary 53C40. Secondary 53C55.\\
\textit{Key words}: Hypersurface, horosphere, symmetric space, K\"{a}hler structure, quaternionic K\"{a}hler structure.}

%\begin{center}
%Draft Version from $2^{\rm nd}$ November 2009
%\end{center}

\section{Introduction}

Real hypersurfaces in complex space forms have been the source for a vast amount
of research activity in the last decades. Little is known though about real hypersurfaces
in other K\"{a}hler manifolds, which is of course due to the more complicated geometry
of other K\"{a}hler manifolds.
Of particular importance in this context are real hypersurfaces $M$ for which
the maximal complex subbundle ${\mathcal C}$ of the tangent bundle
$TM$ of $M$ is closely related to the shape of $M$.
The shape of $M$ is encoded in its second fundamental form $h$.
Let ${\mathcal C}^\perp = TM \ominus {\mathcal C}$ be
the orthogonal  of ${\mathcal C}$ in $TM$. The subbundle ${\mathcal C}^\perp$
has rank one and hence is always integrable. If the integral
manifolds are totally geodesic submanifolds of $M$, then $M$ is called a Hopf hypersurface.
For the special case of $S^{2m-1} \subset {\mathbb C}^m$ the
corresponding foliation is the well-known Hopf foliation.
It is not difficult to see that $M$ is a Hopf hypersurface if and only
if $h({\mathcal C},{\mathcal C}^\perp) = 0$, or equivalently,
if ${\mathcal C}$ is invariant under the shape operator $A$ of $M$.

In this paper we investigate Hopf hypersurfaces in the Hermitian
symmetric space $SU_{2,m}/S(U_2U_m)$, $m \geq 2$. This symmetric
space has rank two.
A major geometric difference between $SU_{2,m}/S(U_2U_m)$ and
its rank one partner, the complex hyperbolic space ${\mathbb C}H^m = SU_{1,m}/S(U_1U_m)$,
is the existence of geometrically inequivalent
tangent vectors. In ${\mathbb C}H^m$ all tangent
vectors are geometrically equivalent because of the two-point homogeneity
of ${\mathbb C}H^m$. On $SU_{2,m}/S(U_2U_m)$, however, there is a
one-parameter family of geometrically inequivalent tangent vectors,
and it therefore seems to be a good choice as ambient K\"{a}hler manifold for
investigating real hypersurfaces.

The Hermitian symmetric space $SU_{2,m}/S(U_2U_m)$ has
the remarkable feature that it is also a quaternionic K\"{a}hler symmetric space.
We denote
by $J$ the K\"{a}hler structure and by ${\mathfrak J}$ the
quaternionic K\"{a}hler structure on $SU_{2,m}/S(U_2U_m)$.
Let $M$
be a connected hypersurface in $SU_{2,m}/S(U_2U_m)$ and denote by
$TM$ the tangent bundle of $M$. The maximal complex subbundle of
$TM$ is defined by ${\mathcal C} = \{ X \in TM \mid JX \in TM\}$,
and the maximal quaternionic subbundle ${\mathcal Q}$ of $TM$ is
defined by ${\mathcal Q} = \{ X \in TM \mid {\mathfrak J}X \subset
TM\}$. The orthogonal complement ${\mathcal Q}^\perp = TM \ominus {\mathcal Q}$
is a subbundle of $TM$ with rank three.
In this article we deal with the classification problem of all Hopf hypersurfaces
in $SU_{2,m}/S(U_2U_m)$ for which $h({\mathcal Q},{\mathcal Q}^\perp) = 0$. This is equivalent to classifying
all real hypersurfaces in $SU_{2,m}/S(U_2U_m)$ for which both ${\mathcal C}$ and ${\mathcal Q}$
are invariant under the shape operator of $M$.

We first present a few hypersurfaces in $SU_{2,m}/S(U_2U_m)$ with these two properties.
We denote by $o \in SU_{2,m}/S(U_2U_m)$ the unique fixed point of the action of the isotropy group
$S(U_2U_m)$ on $SU_{2,m}/S(U_2U_m)$.

Firstly, consider the conic (or geodesic) compactification of
$SU_{2,m}/S(U_2U_m)$. The points in the boundary of this
compactification correspond to equivalence classes of asymptotic
geodesics in $SU_{2,m}/S(U_2U_m)$. Every geodesic in
$SU_{2,m}/S(U_2U_m)$ lies in a maximal flat, that is, a
two-dimensional Euclidean space embedded in $SU_{2,m}/S(U_2U_m)$ as
a totally geodesic submanifold. A geodesic in $SU_{2,m}/S(U_2U_m)$
is called singular if it lies in more than one maximal flat in
$SU_{2,m}/S(U_2U_m)$. A singular point at infinity is the
equivalence class of a singular geodesic in $SU_{2,m}/S(U_2U_m)$. Up
to isometry, there are exactly two singular points at infinity for
$SU_{2,m}/S(U_2U_m)$. The singular points at infinity correspond to
the geodesics in $SU_{2,m}/S(U_2U_m)$ which are determined by
nonzero tangent vectors $X$ with the property $JX \in {\mathfrak
J}X$ and $JX \perp {\mathfrak J}X$ respectively. Our first main result
is a geometric characterization of horospheres in $SU_{2,m}/S(U_2U_m)$ whose center
at infinity is singular.

\begin{thm} \label{horosphereintro}
Let $M$ be a horosphere in  $SU_{2,m}/S(U_2U_m)$, $m \geq 2$.
The following statements are equivalent:
\begin{itemize}
\item[(i)] the center of $M$ is a singular point at infinity,
\item[(ii)] $h({\mathcal C},{\mathcal C}^\perp) = 0$,
\item[(iii)] $h({\mathcal Q},{\mathcal Q}^\perp) = 0$.
\end{itemize}
\end{thm}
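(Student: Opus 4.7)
The plan is to use the solvable Lie group model of the symmetric space. Via the Iwasawa decomposition $SU_{2,m} = KAN$ with $K = S(U_2U_m)$, the simply connected solvable group $S = AN$ acts simply transitively on $SU_{2,m}/S(U_2U_m)$, and the tangent space at $o$ is identified with $\mathfrak{a} \oplus \mathfrak{n}$. The restricted root system $\Sigma$ of $\mathfrak{su}_{2,m}$ with respect to $\mathfrak{a}$ is of type $C_2$ (for $m=2$) or $(BC)_2$ (for $m>2$); in either case the Weyl chamber in $\mathfrak{a}$ has exactly two walls, corresponding to the two isometry classes of singular points at infinity. For a unit vector $H\in\mathfrak{a}$, the horosphere through $o$ whose center is the asymptotic class of $t\mapsto \exp(tH)\cdot o$ is the orbit at $o$ of the codimension-one subgroup of $S$ with Lie algebra $(\mathfrak{a}\ominus\mathbb{R}H)\oplus\mathfrak{n}$, and $H$ itself is its unit normal at $o$. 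By using the $K$-action I would fix $\mathfrak{a}$ so that $JH$ and the quaternionic triple $\mathfrak{J}H$ fit neatly into the standard description of the root data.

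Next I would compute the shape operator $A$ of the horosphere at $o$ using Koszul's formula for the left-invariant metric on $S$. After standard manipulations, $A$ vanishes on $\mathfrak{a}\ominus\mathbb{R}H$ and acts on each restricted root space $\mathfrak{g}_\alpha\subset\mathfrak{n}$ essentially by the scalar $\alpha(H)$, with possible off-diagonal mixing only between root spaces whose scalars coincide. In parallel, I would describe how $J$ and the three local generators of $\mathfrak{J}$ permute the pieces of the root space decomposition. Since $SU_{2,m}/S(U_2U_m)$ is both Hermitian and quaternionic K\"{a}hler symmetric, $J$ and each element of $\mathfrak{J}$ send $\mathfrak{g}_\alpha$ into a specific other root space $\mathfrak{g}_{\alpha'}$ (possibly $\alpha$ itself), with $\alpha'$ determined by $\Sigma$ and the Hermitian/quaternionic data.

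The heart of the argument is then to translate $\mathcal{C}$-invariance under $A$, i.e.\ $A(\mathcal{C}^\perp)\subset\mathcal{C}^\perp$ with $\mathcal{C}^\perp=\mathbb{R}JH$, and $\mathcal{Q}$-invariance, i.e.\ $A(\mathcal{Q}^\perp)\subset\mathcal{Q}^\perp$ with $\mathcal{Q}^\perp=\mathfrak{J}H$, into equalities $\alpha(H)=\alpha'(H)$ for every $J$-paired (respectively $\mathfrak{J}$-paired) pair of positive roots. These families of equalities should turn out to cut out exactly the two walls of the Weyl chamber of $\mathfrak{a}$. Finally, one identifies these two walls intrinsically with the conditions $JH\in\mathfrak{J}H$ and $JH\perp\mathfrak{J}H$ by examining the action of $J$ and $\mathfrak{J}$ on $\mathfrak{a}$ together with the highest root direction, yielding (i)$\Leftrightarrow$(ii)$\Leftrightarrow$(iii).

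I expect the main obstacle to lie in the last step: enumerating the $J$- and $\mathfrak{J}$-pairings of roots in $C_2$ or $(BC)_2$, verifying that the same set of $H$ satisfies all required equalities for both $\mathcal{C}$- and $\mathcal{Q}$-invariance (so that (ii) and (iii) are genuinely equivalent, not merely both consequences of (i)), and matching this set precisely to the two intrinsic singularity conditions on $JH$ and $\mathfrak{J}H$. A subtle point is that $JH$ and the three components of $\mathfrak{J}H$ generally have nonzero parts both in $\mathfrak{a}$ and in certain root spaces; the two walls should be distinguished by which of these parts degenerates (on one wall $JH$ becomes tangent to a flat direction, on the other $\mathfrak{J}H$ concentrates in a single root space), and this degeneration is what ultimately forces the invariance of both $\mathcal{C}$ and $\mathcal{Q}$.
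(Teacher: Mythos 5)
Your proposal is correct and follows essentially the same route as the paper: the authors also realize the horosphere as the orbit of the codimension-one subgroup $S_H$ of the Iwasawa solvable group, use the fact (from Berndt--Tamaru) that its shape operator is ${\rm ad}(H)$ restricted to ${\mathfrak s}_H$, tabulate the principal curvatures on the restricted root spaces of the $(BC)_2$ system, and check that $JH_t$ and ${\mathfrak J}H_t$ lie in eigenspaces of the shape operator exactly when $H_t$ lies on a wall of the Weyl chamber, i.e.\ when $JH_t\in{\mathfrak J}H_t$ or $JH_t\perp{\mathfrak J}H_t$. The only minor simplification you will find in carrying it out is that $JH_t$ and ${\mathfrak J}H_t$ have no component in ${\mathfrak a}$ at all ($JH_t\in{\mathfrak p}_{2\epsilon_1}\oplus{\mathfrak p}_{2\epsilon_2}$ and ${\mathfrak J}H_t$ lies in the sum of four root spaces), so the degeneration you anticipate as a subtlety does not arise.
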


Secondly, consider the standard embedding of $SU_{2,m-1}$ in
$SU_{2,m}$. Then the orbit $SU_{2,m-1} \cdot o$ of $SU_{2,m-1}$
through $o$ is the Riemannian symmetric space $SU_{2,{m-1}}/S(U_2U_{m-1})$
embedded in $SU_{2,m}/S(U_2U_m)$ as a totally geodesic
submanifold. Every tube around $SU_{2,m-1}/S(U_2U_{m-1})$ in
$SU_{2,m}/S(U_2U_m)$ satisfies $h({\mathcal C},{\mathcal C}^\perp) = 0$
and $h({\mathcal Q},{\mathcal Q}^\perp) = 0$.

Finally, let $m$ be even, say $m = 2n$, and consider the standard
embedding of $Sp_{1,n}$ in $SU_{2,2n}$. Then the orbit $Sp_{1,n}
\cdot o$ of $Sp_{1,n}$ through $o$ is the quaternionic hyperbolic
space ${\mathbb H}H^n$ embedded in $SU_{2,2n}/S(U_2U_{2n})$
as a totally geodesic submanifold. Any tube around
${\mathbb H}H^n$ in $SU_{2,2n}/S(U_2U_{2n})$ satisfies
$h({\mathcal C},{\mathcal C}^\perp) = 0$ and $h({\mathcal Q},{\mathcal Q}^\perp) = 0$.

The second main result of this article states that with one possible exceptional case
there are no other such real hypersurfaces.

\begin{thm} \label{mainresult}
Let $M$ be a connected hypersurface in $SU_{2,m}/S(U_2U_m)$, $m \geq 2$.
Then $M$ satisfies $h({\mathcal C},{\mathcal C}^\perp) = 0$ and $h({\mathcal Q},{\mathcal Q}^\perp) = 0$ if and only if
$M$ is congruent to an open part of one of the following hypersurfaces:
\begin{itemize}
\item[(i)] a tube around a totally geodesic $SU_{2,m-1}/S(U_2U_{m-1})$
in $SU_{2,m}/S(U_2U_m)$;
\item[(ii)] a tube around a totally geodesic ${\mathbb H}H^n$ in
$SU_{2,2n}/S(U_2U_{2n})$, $m = 2n$;
\item [(iii)] a horosphere in $SU_{2,m}/S(U_2U_m)$
 whose center at infinity is singular;
\end{itemize}
or the following exceptional case holds:
\begin{itemize}
\item[(iv)] The normal bundle $\nu M$ of $M$ consists of singular tangent vectors
of type $JX \perp {\mathfrak J}X$.
Moreover, $M$ has at least four distinct principal curvatures,
three of which are given by
$$
\alpha = \sqrt{2}\ ,\ \gamma = 0\ ,\ \lambda =
\frac{1}{\sqrt{2}}
$$
with corresponding principal curvature spaces
$$
T_\alpha = ({\mathcal C} \cap {\mathcal Q})^\perp\ ,\ T_\gamma =
J{\mathcal Q}^\perp\ ,\ T_\lambda \subset  {\mathcal C} \cap {\mathcal Q} \cap J{\mathcal Q}.
$$
If $\mu$ is another (possibly nonconstant) principal
curvature function, then we have $T_\mu \subset {\mathcal C} \cap {\mathcal Q} \cap J{\mathcal Q}$,
$JT_\mu \subset T_\lambda$
and ${\mathfrak J}T_\mu \subset T_\lambda $.
\end{itemize}
\end{thm}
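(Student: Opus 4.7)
The plan is to convert both hypotheses into properties of the shape operator $A$, extract a rigidity statement on the normal direction from the Gauss and Codazzi equations, and then identify $M$ case by case. Fix a local unit normal $N$; the Hopf condition $h(\mathcal{C},\mathcal{C}^\perp)=0$ gives $A\xi=\alpha\xi$ for the Reeb vector field $\xi=-JN$, while $h(\mathcal{Q},\mathcal{Q}^\perp)=0$ gives $A\mathcal{Q}^\perp\subset\mathcal{Q}^\perp$, so $A|_{\mathcal{Q}^\perp}$ is a symmetric endomorphism of a rank-three subbundle. Choose a local orthonormal frame $J_1,J_2,J_3$ of $\mathfrak{J}$ adapted so that the orthogonal projection of $JN$ onto $\mathfrak{J}N$ is parallel to $J_1N$, and set $\xi_\nu=-J_\nu N$. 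The smooth function $f=g(JN,\xi_1)^2\in[0,1]$ measures how far $N$ is from being a singular tangent vector: $f\equiv 1$ corresponds to the first singular type $JN\in\mathfrak{J}N$ and $f\equiv 0$ to the second singular type $JN\perp\mathfrak{J}N$. These are the normal types of the examples (i) and (ii), and the horospheres (iii) arise as their horospherical limits.

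Next I would feed the explicit curvature tensor of $SU_{2,m}/S(U_2U_m)$, a combination of complex-hyperbolic and quaternionic-hyperbolic terms reflecting the two parallel structures, into the Codazzi equation $(\nabla_X A)Y-(\nabla_Y A)X=(\bar R(X,Y)N)^\top$, testing it on the strategic pairs $(\xi,\xi_\nu)$, $(\xi,X)$ and $(X,Y)$ with $X,Y\in\mathcal{C}\cap\mathcal{Q}$. Combined with the $A$-invariance of $\mathcal{C}$ and $\mathcal{Q}$ this should yield a coupled system relating $\alpha$, the entries of the $3{\times}3$ matrix of $A|_{\mathcal{Q}^\perp}$ in the frame $\xi_1,\xi_2,\xi_3$, the derivatives $X(f)$, and the eigenvalues of $A$ on $\mathcal{C}\cap\mathcal{Q}\cap J\mathcal{Q}$. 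The central expected consequence is that the system forces $f$ to be locally constant and, outside the exceptional algebraic configuration, to take the value $0$ or $1$. Once $f$ is constant the ambient curvature tensor restricted to $TM\otimes TM$ decouples along the $A$-eigenspace decomposition, and the principal curvatures together with their multiplicities can be read off explicitly.

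In the case $f\equiv 1$ the resulting shape operator matches that obtained from the Jacobi-field computation of a tube around the totally geodesic $SU_{2,m-1}/S(U_2U_{m-1})$; integrating the focal distribution back along the geodesic in the normal direction identifies $M$ with such a tube, or, in the limit where two of the normal principal curvatures coincide, with a horosphere whose center at infinity is the first singular type. In the case $f\equiv 0$ the analogous match identifies $M$ with a tube around $\mathbb{H}H^n$, which forces $m=2n$, or with a horosphere whose center is the second singular type. The remaining algebraic possibility from the Codazzi analysis, in which the three values $\alpha=\sqrt{2}$, $\gamma=0$, $\lambda=\tfrac{1}{\sqrt 2}$ are forced on $(\mathcal{C}\cap\mathcal{Q})^\perp$, $J\mathcal{Q}^\perp$ and part of $\mathcal{C}\cap\mathcal{Q}\cap J\mathcal{Q}$ respectively while one additional eigenvalue function $\mu$ remains free on a subbundle of $\mathcal{C}\cap\mathcal{Q}\cap J\mathcal{Q}$ interchanged by $J$ and $\mathfrak{J}$, is precisely case (iv); the integrability conditions do not propagate to a congruence statement, and the argument must stop at the description given in the theorem.

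The main obstacle I anticipate is the rigidity of $f$: the Codazzi equation produces $X(f)$ as a bilinear expression in the eigenvalues of $A$ and the inner products $g(JN,\xi_\nu)$, and one must exclude intermediate values of $f$ without first knowing the full structure of $A$, which is essentially the step that separates hypersurfaces satisfying both conditions from those satisfying only one. A secondary difficulty is the reconstruction in the tube cases: identifying the focal manifold obtained from the zero or extremal eigenspace as a totally geodesic submanifold of the asserted type requires Jacobi-field computations in the ambient symmetric space and an appeal to the classification of totally geodesic submanifolds of $SU_{2,m}/S(U_2U_m)$ of the relevant dimension and holonomy type. The persistence of case (iv), where rigidity genuinely breaks down, is an inevitable by-product of this method rather than a defect of it.
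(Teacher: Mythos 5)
Your outline follows the same route as the paper: reduce the two hypotheses to $A$-invariance of ${\mathcal C}$ and ${\mathcal Q}$, use the Codazzi equation to show the normal vector is singular, split into the two singular types, pin down the principal curvatures in each, and identify $M$ via Jacobi fields, focal sets and the classification of totally geodesic submanifolds. The genuine gap is the step you yourself flag as ``the main obstacle I anticipate'': the singularity of $N$. You propose to control a derivative $X(f)$ of $f=g(JN,\xi_1)^2$ and argue that $f$ is locally constant with values in $\{0,1\}$, but you do not supply the argument, and the paper's resolution (Proposition \ref{key}) is in fact a \emph{pointwise algebraic} cancellation, not a rigidity statement about $f$. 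Writing $\xi=\eta(Z)Z+\eta(X)X$ with unit vectors $Z\in{\mathfrak J}N$ and $X\in{\mathcal Q}$, the invariance of ${\mathfrak J}N$ and ${\mathcal Q}$ under $A$ forces $AZ=\alpha Z$ and $AX=\alpha X$ whenever $\eta(X)\eta(Z)\neq 0$; taking $Z=\xi_3$, inserting $X$ into the identity of Proposition \ref{pcQ} for $\nu=1$ and $\nu=2$ and pairing with $\xi_2$ and $\xi_1$ yields $\eta(X)\eta(Z)=(\beta_2-\beta_1)q_3(X)$ and $\eta(X)\eta(Z)=(\beta_1-\beta_2)q_3(X)$, whence $\eta(X)\eta(Z)=0$, a contradiction. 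The decisive ingredient here is the appearance of the connection one-forms $q_\nu$ coming from $\bar\nabla J_\nu$, which your sketch does not anticipate; without some such mechanism the argument does not start. Note also that your phrasing suggests case (iv) might correspond to intermediate values of $f$; it does not --- singularity of $N$ holds unconditionally, and the exceptional case lives entirely inside the branch $JN\perp{\mathfrak J}N$.

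A secondary gap is that the identification in each branch is substantially heavier than ``the shape operator matches that of a tube.'' In the branch $JN\in{\mathfrak J}N$ several algebraic configurations ($2\lambda=\alpha$, $2\lambda=\beta_\nu$, $\beta_2\beta_3=2$, \ldots) must be separately excluded or absorbed, and the surviving configuration $\alpha=\tfrac{3}{\sqrt2}$, $\beta_2=\beta_3=\sqrt2$ of Proposition \ref{pccase1}(iii) does \emph{not} directly match a tube; the paper handles it by displacing $M$ along the normal geodesics to an equidistant hypersurface $M_r$ which again satisfies both invariance conditions and whose curvatures then force the tube case. Moreover, before one can write $\alpha=2\coth(2r)$ or $\sqrt2\tanh(\sqrt2 r)$ one must prove $\alpha$ is constant, which requires the symmetry-of-the-Hessian argument applied to ${\rm grad}^\alpha=(\xi\alpha)\xi$. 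These steps are compatible with your plan, but as written the proposal is a roadmap that defers precisely the computations on which the theorem turns.
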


One of the main tools for the proof of Theorem \ref{mainresult} is the Codazzi equation,
which provides some useful relations between the principal curvatures of the hypersurface.
The exceptional case arises from particular values of possible principal curvatures
for which the Codazzi equation degenerates partially to the equation $0 = 0$ and therefore does
not provide sufficient information.
We conjecture that there are no real hypersurfaces in $SU_{2,m}/S(U_2U_m)$ whose principal curvatures
satisfy the conditions stated in Theorem \ref{mainresult} (iv).
It is remarkable that up to this possible exception all hypersurfaces satisfying
$h({\mathcal C},{\mathcal C}^\perp) = 0$ and $h({\mathcal Q},{\mathcal Q}^\perp) = 0$
are locally homogeneous.

The article is organised as follows. In Section \ref{horo}
we discuss some aspects of the geometry of horospheres in $SU_{2,m}/S(U_2U_m)$
and prove Theorem \ref{horosphereintro}. In Section \ref{curvature}
we present some basic material about the curvature of $SU_{2,m}/S(U_2U_m)$. In
Sections \ref{tube1} and \ref{tube2} we investigate the geometry of the tubes
around the totally geodesic submanifold $SU_{2,m-1}/S(U_2U_m)$ in $SU_{2,m}/S(U_2U_m)$
and around the totally geodesic submanifold ${\mathbb H}H^n = Sp_{1,n}/Sp_1Sp_n$ in $SU_{2,2n}/S(U_2U_{2n})$.
We show in particular that $h({\mathcal C},{\mathcal C}^\perp) = 0$ and $h({\mathcal Q},{\mathcal Q}^\perp) = 0$
holds for every tube around any of these two totally geodesic submanifolds.
In Section \ref{proof} we present the proof of Theorem \ref{mainresult}. A key step is
Proposition \ref{key} where we show that the normal bundle of a hypersurface in $SU_{2,m}/S(U_2U_m)$
with $h({\mathcal C},{\mathcal C}^\perp) = 0$ and $h({\mathcal Q},{\mathcal Q}^\perp) = 0$ consists
of singular tangent vectors.

We finally mention that the corresponding classification for the compact
Riemannian symmetric space $SU_{2+m}/S(U_2U_m)$
was obtained in \cite{BS}. However, the noncompactness
of $SU_{2,m}/S(U_2U_m)$ leads to problems which require different
methods.

\section{Horospheres in $SU_{2,m}/S(U_2U_m)$} \label{horo}

The Riemannian symmetric space $SU_{2,m}/S(U_2U_m)$
is a connected, simply connected, irreducible
Riemannian symmetric space of noncompact type and with rank two. Let $G =
SU_{2,m}$ and $K = S(U_2U_m)$, and denote by ${\mathfrak g}$ and
${\mathfrak k}$ the corresponding Lie algebra. Let $B$ be the
Killing form of ${\mathfrak g}$ and denote by ${\mathfrak p}$ the
orthogonal complement of ${\mathfrak k}$ in ${\mathfrak g}$ with
respect to $B$. The resulting decomposition ${\mathfrak g} =
{\mathfrak k} \oplus {\mathfrak p}$ is a Cartan decomposition of
${\mathfrak g}$. The Cartan involution $\theta \in {\rm Aut}({\mathfrak g})$ on
${\mathfrak s}{\mathfrak u}_{2,m}$ is given by $\theta(A) = I_{2,m} A I_{2,m}$, where
$ I_{2,m} = \begin{pmatrix} -I_2 & 0_{2,m} \\ 0_{m,2} & I_m \end{pmatrix}$, and
$I_2$ and $I_m$ is the identity $(2 \times 2)$-matrix and $(m \times m)$-matrix
respectively. Then $\langle X , Y \rangle =
-B(X,\theta Y)$ is a positive definite ${\rm Ad}(K)$-invariant inner
product on ${\mathfrak g}$. Its restriction to ${\mathfrak p}$
induces a Riemannian metric $g$ on $SU_{2,m}/S(U_2U_m)$, which is also
known as the Killing metric on $SU_{2,m}/S(U_2U_m)$.
Throughout this paper we consider $SU_{2,m}/S(U_2U_m)$
together with this particular Riemannian metric $g$.

The Lie algebra ${\mathfrak k}$ decomposes orthogonally into
${\mathfrak k}  = {\mathfrak
s}{\mathfrak u}_2 \oplus {\mathfrak s}{\mathfrak u}_m \oplus
{\mathfrak u}_1$, where ${\mathfrak u}_1$ is the one-dimensional center of
${\mathfrak k}$. The adjoint action of ${\mathfrak s}{\mathfrak
u}_2$ on ${\mathfrak p}$ induces the quaternionic K\"{a}hler structure
${\mathfrak J}$ on $SU_{2,m}/S(U_2U_m)$, and the adjoint
action of
$$
Z = \begin{pmatrix} \frac{mi}{m+2}I_2 & 0_{2,m} \\ 0_{m,2} & \frac{-2i}{m+2}I_m \end{pmatrix} \in {\mathfrak u}_1
$$
induces the K\"{a}hler structure $J$ on $SU_{2,m}/S(U_2U_m)$.
By construction, $J$ commutes
with each almost Hermitian structure $J_1$ in ${\mathfrak J}$.
Recall that a canonical local basis $J_1,J_2,J_3$ of a
quaternionic K\"{a}hler structure ${\mathfrak J}$ consists of three
almost Hermitian structures $J_1,J_2,J_3$ in ${\mathfrak J}$ such
that $J_\nu J_{\nu+1} = J_{\nu + 2} = - J_{\nu+1} J_\nu$, where the
index $\nu$ is to be taken modulo $3$. The tensor
field $JJ_\nu$, which is locally defined on $SU_{2,m}/S(U_2U_m)$,
is selfadjoint and satisfies $(JJ_\nu)^2 = I$ and ${\rm
tr}(JJ_\nu) = 0$, where $I$ is the identity transformation.
For a nonzero tangent vector $X$ we define
${\mathbb R}X = \{\lambda X \mid \lambda \in {\mathbb R}\}$,
${\mathbb C}X = {\mathbb R}X \oplus {\mathbb R}JX$, and
${\mathbb H}X = {\mathbb R}X \oplus {\mathfrak J}X$.

We identify the tangent space
$T_oSU_{2,m}/S(U_2U_m)$ of $SU_{2,m}/S(U_2U_m)$ at $o$
with ${\mathfrak p}$ in the usual way. Let ${\mathfrak a}$ be a
maximal abelian subspace of ${\mathfrak p}$. Since $SU_{2,m}/S(U_2U_m)$
has rank two, the dimension of any such subspace is two.
Every nonzero tangent vector $X \in T_oSU_{2,m}/S(U_2U_m)
\cong {\mathfrak p}$ is contained in some maximal abelian subspace
of ${\mathfrak p}$. Generically this subspace is uniquely determined
by $X$, in which case $X$ is called regular. If there exists more
than one maximal abelian subspaces of ${\mathfrak p}$ containing
$X$, then $X$ is called singular. There is a simple and useful
characterization of the singular tangent vectors: A nonzero tangent
vector $X \in {\mathfrak p}$ is singular if and only if $JX \in
{\mathfrak J}X$ or $JX \perp {\mathfrak J}X$.

Let ${\mathfrak a}^*$ be the dual vector space of ${\mathfrak a}$.
For each $\lambda \in {\mathfrak a}^*$
we define ${\mathfrak g}_\lambda = \{ X \in {\mathfrak g} \mid {\rm
ad}(H)X = \lambda(H)X\ {\rm for\ all}\ H \in {\mathfrak a}\}$. If
$\lambda \neq 0$ and ${\mathfrak g}_\lambda \neq \{0\}$, then
$\lambda$ is called a restricted root and ${\mathfrak g}_\lambda$ is
called a restricted root space. Let $\Sigma \subset {\mathfrak a}^*$
be the set of restricted roots. For each $\lambda \in \Sigma$ we
define $H_\lambda \in {\mathfrak a}$ by $\lambda(H) = \langle
H_\lambda , H \rangle$ for all $H \in {\mathfrak a}$. Since
${\mathfrak a}$ is abelian we get a restricted root space
decomposition ${\mathfrak g} = {\mathfrak g}_0 \oplus \left(
\bigoplus_{\lambda \in \Sigma} {\mathfrak g}_\lambda \right)$, where
${\mathfrak g}_0 = {\mathfrak k}_0 \oplus {\mathfrak a}$ and
${\mathfrak k}_0 \cong {\mathfrak u}_{m-2} \oplus {\mathfrak u}_1$ is
the centralizer of ${\mathfrak a}$ in ${\mathfrak k}$. The
corresponding restricted root system is of type $(BC)_2$. We choose
a set $\Lambda = \{\alpha_1,\alpha_2\}$ of simple roots of $\Sigma$
such that $\alpha_1$ is the longer root of the two simple roots, and
denote by $\Sigma^+$ the resulting set of positive restricted roots.
If we write, as usual, $\alpha_1 = \epsilon_1 - \epsilon_2$ and
$\alpha_2 = \epsilon_2$, the positive restricted roots are $\alpha_1
= \epsilon_1 - \epsilon_2$, $\alpha_2 = \epsilon_2$, $\alpha_1 +
\alpha_2 = \epsilon_1$, $2\alpha_2 = 2\epsilon_2$, $\alpha_1 +
2\alpha_2 = \epsilon_1 + \epsilon_2$ and $2\alpha_1 + 2\alpha_2 =
2\epsilon_1$. The multiplicities of the restricted roots $2\alpha_2$
and $2\alpha_1 + 2\alpha_2$ are $1$, the multiplicities of the
restricted roots $\alpha_1$ and $\alpha_1 + 2\alpha_2$ are $2$, and
the multiplicities of $\alpha_2$ and $\alpha_1 + \alpha_2$ are
$2m-4$, respectively. We denote by $\bar{C}^+(\Lambda)$ the closed
positive Weyl chamber in ${\mathfrak a}$ which is determined by
$\Lambda$. Note that $\bar{C}^+(\Lambda)$ is the closed cone in
${\mathfrak a}$ bounded by the half-lines spanned by
$H_{\alpha_1+\alpha_2}$ and $H_{\alpha_1 + 2\alpha_2}$.

We define a nilpotent subalgebra ${\mathfrak n}$ of ${\mathfrak g}$
by ${\mathfrak n} = \bigoplus_{\lambda \in \Sigma^+} {\mathfrak
g}_\lambda$. Then ${\mathfrak g} = {\mathfrak k} \oplus {\mathfrak
a} \oplus {\mathfrak n}$ is an Iwasawa decomposition of ${\mathfrak
g}$. The subalgebra ${\mathfrak s} = {\mathfrak a} \oplus {\mathfrak
n}$ of ${\mathfrak g}$ is solvable, and the corresponding connected
subgroup $S$ of $G$ with Lie algebra ${\mathfrak s}$ is solvable,
simply connected, and acts simply transitively on $SU_{2,m}/S(U_2U_m)$.
Let $H \in {\mathfrak a}$ be a unit vector. Then
${\mathfrak s}_H = {\mathfrak s} \ominus {\mathbb R}H$ is a
subalgebra of ${\mathfrak s}$ with codimension one. The connected
subgroup $S_H$ of $S$ with Lie algebra ${\mathfrak s}_H$ acts on
$SU_{2,m}/S(U_2U_m)$ with cohomogeneity one. If $H \in
\bar{C}^+(\Lambda)$, then the orbits of the action are the
horospheres in $SU_{2,m}/S(U_2U_m)$ which are determined by
the geodesic $\gamma_H$ with $\gamma_H(0) = o$ and $\dot\gamma_H(0)
= H$. We recall from \cite{BT} that the shape operator $A_H$ of the
horosphere $S_H \cdot o$, the orbit of $S_H$ through $o$, with respect to the unit
normal vector $H$ is the adjoint transformation $A_H = {\rm ad}(H)$
restricted to ${{\mathfrak s}_H}$.

Recall that the conic compactification of $SU_{2,m}/S(U_2U_m)$
is given by adding to the symmetric space $SU_{2,m}/S(U_2U_m)$ the equivalence
classes of its asymptotic geodesics, and then
equipping the resulting set with the cone topology.
If we denote by $SU_{2,m}/S(U_2U_m)(\infty)$
the boundary of $SU_{2,m}/S(U_2U_m)$ with
respect to the conic compactification, then the equivalence class
$[\gamma_H] \in SU_{2,m}/S(U_2U_m)(\infty)$ of all geodesics
in $SU_{2,m}/S(U_2U_m)$ which are asymptotic to $\gamma_H$ can
be viewed as the center of the horospheres given by the
$S_H$-action. A point in $SU_{2,m}/S(U_2U_m)(\infty)$
is called singular, if the geodesics in the corresponding
equivalence class are all singular. It is worthwhile to mention that
if the tangent vector to a geodesic in $SU_{2,m}/S(U_2U_m)$ is
singular at one point, than it is singular at every point. Thus it
makes sense to talk about singular geodesics. Moreover, if two
geodesics are asymptotic and one of them is singular, then the other
one must be singular as well. Therefore we can say that a point at
infinity is singular if the corresponding equivalence class of
asymptotic geodesics consists of singular geodesics. For details
about the conic compactification and points at infinity we refer to
\cite{E}.

We now proceed with some explicit calculations. We denote by $M_{k_1,k_2}({\mathbb C})$ the real
vector space of all $(k_1 \times k_2)$-matrices with complex coefficients, and by $0_{k_1,k_2}$ the
$(k_1 \times k_2)$-matrix with all coefficients equal to $0$.
For $a = (a_1,a_2) \in {\mathbb R}^2$ we put $\Delta_{2,2}(a) = \begin{pmatrix}
a_1 & 0 \\ 0 & a_2 \end{pmatrix}$. Then we have
\begin{eqnarray*}
{\mathfrak g} & = & \left\{ \left.
\begin{pmatrix} A & C \\ C^* & B\end{pmatrix}
\right|  A \in {\mathfrak u}_2,\ B \in {\mathfrak u}_m,\ {\rm tr}(A) + {\rm tr}(B) = 0,\ C \in M_{2,m}({\mathbb C})  \right\}, \\
{\mathfrak k} & = & \left\{ \left. \begin{pmatrix} A & 0_{2,m} \\
0_{m,2} & B\end{pmatrix}
\right|  A \in {\mathfrak u}_2,\ B \in {\mathfrak u}_m,\ {\rm tr}(A) + {\rm tr}(B) = 0 \right\}, \\
{\mathfrak p} & = & \left\{ \left. \begin{pmatrix} 0_{2,2} & C \\
C^* & 0_{m,m} \end{pmatrix}
\right|  C \in M_{2,m}({\mathbb C})  \right\}, \\
{\mathfrak a} & = & \left\{ \left. \begin{pmatrix} 0_{2,2} &
\Delta_{2,2}(a) & 0_{2,m-2} \\ \Delta_{2,2}(a) & 0_{2,2} & 0_{2,m-2} \\
0_{m-2,2} & 0_{m-2,2} & 0_{m-2,m-2} \end{pmatrix}
\right|  a \in {\mathbb R}^2  \right\}.
\end{eqnarray*}
The two vectors
$$
e_1 = \begin{pmatrix} 0_{2,2} &
\Delta_{2,2}(1,0) & 0_{2,m-2} \\ \Delta_{2,2}(1,0) & 0_{2,2} & 0_{2,m-2} \\
0_{m-2,2} & 0_{m-2,2} & 0_{m-2,m-2} \end{pmatrix}\ ,\
e_2 =  \begin{pmatrix} 0_{2,2} &
\Delta_{2,2}(0,1) & 0_{2,m-2} \\ \Delta_{2,2}(0,1) & 0_{2,2} & 0_{2,m-2} \\
0_{m-2,2} & 0_{m-2,2} & 0_{m-2,m-2} \end{pmatrix}.
$$
form a basis for ${\mathfrak a}$. We denote by $\epsilon_1,\epsilon_2 \in {\mathfrak a}^*$ the
dual vectors of $e_1,e_2$. Then the root system
$\Sigma$, the positive roots $\Sigma^+$, and the simple roots
$\Lambda = \{\alpha_1,\alpha_2\}$ are given by
$\Sigma = \{ \pm \epsilon_1 \pm \epsilon_2,\pm \epsilon_1,\pm \epsilon_2,\pm 2\epsilon_1,\pm2\epsilon_2 \}$,
$\Sigma^+ = \{ \epsilon_1 + \epsilon_2,\epsilon_1 - \epsilon_2,\epsilon_1,\epsilon_2,2\epsilon_1,2\epsilon_2 \}$,
$\alpha_1 = \epsilon_1 - \epsilon_2$, $\alpha_2 = \epsilon_2$.
For each $\lambda \in \Sigma$ we define the corresponding restricted root space ${\mathfrak p}_\lambda$ in
${\mathfrak p}$ by
$ {\mathfrak p}_\lambda = ({\mathfrak g}_\lambda \oplus {\mathfrak g}_{-\lambda}) \cap {\mathfrak p}$.
Then we have
${\mathfrak p}_0 = {\mathfrak a}$ and

\begin{eqnarray*}
{\mathfrak p}_{\epsilon_1} & = & \left\{ \left. \begin{pmatrix}
0 & 0 & 0 & 0 & v_1 & \cdots & v_{m-2} \\
0 & 0 & 0 & 0 & 0 & \cdots & 0 \\
0 & 0 & 0 & 0 & 0 & \cdots & 0 \\
0 & 0 & 0 & 0 & 0 & \cdots & 0 \\
\bar{v}_1 & 0 & 0 & 0 & 0 & \cdots & 0 \\
\vdots & \vdots & \vdots & \vdots & \vdots & \ddots & \vdots\\
\bar{v}_{m-2} & 0 & 0 & 0 & 0 & \cdots & 0 \\
\end{pmatrix}
\right|  v_1,\ldots,v_{m-2} \in {\mathbb C}  \right\} \cong {\mathbb C}^{m-2},
\end{eqnarray*}
\begin{eqnarray*}
{\mathfrak p}_{\epsilon_2} & = & \left\{ \left. \begin{pmatrix}
0 & 0 & 0 & 0 & 0 & \cdots & 0 \\
0 & 0 & 0 & 0 & v_1 & \cdots & v_{m-2}\\
0 & 0 & 0 & 0 & 0 & \cdots & 0 \\
0 & 0 & 0 & 0 & 0 & \cdots & 0 \\
0 & \bar{v}_1 & 0 & 0 & 0 & \cdots & 0 \\
\vdots & \vdots & \vdots & \vdots & \vdots & \ddots & \vdots\\
0 & \bar{v}_{m-2} & 0 & 0 & 0 & \cdots & 0 \\
\end{pmatrix}
\right|  v_1,\ldots,v_{m-2} \in {\mathbb C}  \right\} \cong {\mathbb C}^{m-2},
\end{eqnarray*}
\begin{eqnarray*}
{\mathfrak p}_{2\epsilon_1} & = & \left\{ \left. \begin{pmatrix}
0 & 0 & ix & 0 & 0 & \cdots & 0 \\
0 & 0 & 0 & 0 & 0 & \cdots & 0 \\
-ix & 0 & 0 & 0 & 0 & \cdots & 0 \\
0 & 0 & 0 & 0 & 0 & \cdots & 0 \\
0 & 0 & 0 & 0 & 0 & \cdots & 0 \\
\vdots & \vdots & \vdots & \vdots & \vdots & \ddots & \vdots\\
0 & 0 & 0 & 0 & 0 & \cdots & 0 \\
\end{pmatrix}
\right| x \in {\mathbb R}  \right\} \cong {\mathbb R},
\end{eqnarray*}
\begin{eqnarray*}
{\mathfrak p}_{2\epsilon_2} & = & \left\{ \left. \begin{pmatrix}
0 & 0 & 0 & 0 & 0 & \cdots & 0 \\
0 & 0 & 0 & ix & 0 & \cdots & 0\\
0 & 0 & 0 & 0 & 0 & \cdots & 0 \\
0 & -ix & 0 & 0 & 0 & \cdots & 0 \\
0 & 0 & 0 & 0 & 0 & \cdots & 0 \\
\vdots & \vdots & \vdots & \vdots & \vdots & \ddots & \vdots\\
0 & 0 & 0 & 0 & 0 & \cdots & 0 \\
\end{pmatrix}
\right|  x \in {\mathbb R}  \right\} \cong {\mathbb R},
\end{eqnarray*}
\begin{eqnarray*}
{\mathfrak p}_{\epsilon_1-\epsilon_2} & = & \left\{ \left. \begin{pmatrix}
0 & 0 & 0 & z & 0 & \cdots & 0 \\
0 & 0 & \bar{z} & 0 & 0 & \cdots & 0 \\
0 & z & 0 & 0 & 0 & \cdots & 0 \\
\bar{z} & 0 & 0 & 0 & 0 & \cdots & 0 \\
0 & 0 & 0 & 0 & 0 & \cdots & 0 \\
\vdots & \vdots & \vdots & \vdots & \vdots & \ddots & \vdots\\
0 & 0 & 0 & 0 & 0 & \cdots & 0 \\
\end{pmatrix}
\right| z \in {\mathbb C}  \right\} \cong {\mathbb C},
\end{eqnarray*}
\begin{eqnarray*}
{\mathfrak p}_{\epsilon_1+\epsilon_2} & = & \left\{ \left. \begin{pmatrix}
0 & 0 & 0 & -z & 0 & \cdots & 0 \\
0 & 0 & \bar{z} & 0 & 0 & \cdots & 0\\
0 & z & 0 & 0 & 0 & \cdots & 0 \\
-\bar{z} & 0 & 0 & 0 & 0 & \cdots & 0 \\
0 & 0 & 0 & 0 & 0 & \cdots & 0 \\
\vdots & \vdots & \vdots & \vdots & \vdots & \ddots & \vdots\\
0 & 0 & 0 & 0 & 0 & \cdots & 0 \\
\end{pmatrix}
\right|  z \in {\mathbb C}  \right\} \cong {\mathbb C}.
\end{eqnarray*}

For $t \in [0,\pi/4]$ we define
$$ H_t = \cos(t) e_1 + \sin(t) e_2 \in {\mathfrak a} $$
and denote by $M_t$ the horosphere which coincides with the orbit
$S_{H_t} \cdot o$. Every horosphere in $SU_{2,m}/S(U_2U_m)$ is
isometrically congruent to $M_t$ for some $t \in [0,\pi/4]$, and two
horospheres $M_{t_1}$ and $M_{t_2}$ are isometrically congruent if
and only if $t_1 = t_2$. The principal curvatures of $M_t$ with
respect to the unit normal vector $H_t$ are $0$ and $\lambda(H_t)$,
$\lambda \in \Sigma^+$, and ${\mathfrak a} \ominus {\mathbb R}H_t$
and ${\mathfrak p}_\lambda$ consists of corresponding principal
curvature vectors.

\begin{tb} \label{table1} The principal curvatures and corresponding eigenspaces and multiplicites of the
horosphere determined by $H_t = \cos(t) e_1 + \sin(t) e_2 \in {\mathfrak a}$ are given by
\smallskip
\begin{center}
\begin{tabular}{|l|l|l|}
\hline
\mbox{principal curvature} & \mbox{eigenspace}  & \mbox{multiplicity} \\
\hline
$0$ & ${\mathfrak a} \ominus {\mathbb R}H_t$ & $1$ \\
$2 \cos(t)$ & ${\mathfrak p}_{2\epsilon_1}$ & $1$ \\
$2 \sin(t)$ & ${\mathfrak p}_{2\epsilon_2}$ & $1$ \\
$\cos(t) - \sin(t)$ & ${\mathfrak p}_{\epsilon_1 - \epsilon_2}$ & $2$ \\
$\cos(t) + \sin(t)$ & ${\mathfrak p}_{\epsilon_1 + \epsilon_2}$ & $2$ \\
$\cos(t)$ & ${\mathfrak p}_{\epsilon_1}$ & $2m-4$ \\
$\sin(t)$ & ${\mathfrak p}_{\epsilon_2}$ & $2m-4$ \\
\hline\end{tabular}
\end{center}
\end{tb}

Thus the number of distinct principal curvatures is $7$ for $m > 2$ and $5$ for $m=2$
unless $t \in \{0,\arctan(\frac{1}{2}),\frac{\pi}{4}\}$. In these three cases we get the following table:

\begin{tb} \label{table2} The principal curvatures and corresponding eigenspaces and multiplicites of the
horosphere determined by $H_t = \cos(t) e_1 + \sin(t) e_2 \in {\mathfrak a}$ with $t \in \{0,\arctan(\frac{1}{2}),\frac{\pi}{4}\}$
are given by
\smallskip
\begin{center}
\begin{tabular}{|l|l|l|l|}
\hline
$t$ & \mbox{principal curvature} & \mbox{eigenspace}  & \mbox{multiplicity}\\
\hline
$0$ & $0$ & ${\mathbb R}e_2 \oplus {\mathfrak p}_{\epsilon_2} \oplus {\mathfrak p}_{2\epsilon_2}$ & $2m-2$ \\
& $1$ & ${\mathfrak p}_{\epsilon_1} \oplus {\mathfrak p}_{\epsilon_1 - \epsilon_2}
\oplus {\mathfrak p}_{\epsilon_1 + \epsilon_2}$ & $2m$\\
& $2$ & ${\mathfrak p}_{2\epsilon_1}$ & $1$ \\
\hline
$\arctan(\frac{1}{2})$ & $0$ & ${\mathbb R}(e_1-2e_2)$ & $1$ \\
& $1/\sqrt{5}$ & ${\mathfrak p}_{\epsilon_2} \oplus {\mathfrak p}_{\epsilon_1 - \epsilon_2}$ & $2m-2$\\
& $2/\sqrt{5}$ & ${\mathfrak p}_{\epsilon_1} \oplus {\mathfrak p}_{2\epsilon_2}$ & $2m-3$\\
& $3/\sqrt{5}$ & ${\mathfrak p}_{\epsilon_1 + \epsilon_2}$ & $2$\\
& $4/\sqrt{5}$ & ${\mathfrak p}_{2\epsilon_1}$ & $1$\\
\hline
$\frac{\pi}{4}$ & $0$ & ${\mathbb R}(e_1 -e_2) \oplus {\mathfrak p}_{\epsilon_1 - \epsilon_2}$ & $3$ \\
& $1/\sqrt{2}$ & ${\mathfrak p}_{\epsilon_1} \oplus {\mathfrak p}_{\epsilon_2}$ & $4m-8$\\
& $\sqrt{2}$ & ${\mathfrak p}_{2\epsilon_1} \oplus {\mathfrak p}_{2\epsilon_2} \oplus
{\mathfrak p}_{\epsilon_1 + \epsilon_2}$ & $4$ \\
\hline
\end{tabular}
\end{center}
\end{tb}

We now investigate the maximal complex subbundle ${\mathcal C}_t$ of $TM_t$. We recall that the complex structure $J$ on
${\mathfrak p} \cong T_oSU_{2,m}/S(U_2U_m)$ is given by $JX = {\rm ad}(Z)X$ for all $X \in {\mathfrak p}$, where
$Z = \begin{pmatrix} \frac{mi}{m+2}I_2 & 0_{2,m} \\ 0_{m,2} & \frac{-2i}{m+2}I_m \end{pmatrix}$.
In particular, we get
$$JH_t = iH_t \in {\mathfrak p}_{2\epsilon_1} \oplus {\mathfrak p}_{2\epsilon_2}.$$
The maximal complex subbundle ${\mathcal C}_t$ of $M_t$ is invariant under the shape operator of $M_t$
if and only if $JH_t$ is a principal curvature vector. Using the above tables and root space descriptions it is easy to see
that $JH_t$ is a principal curvature vector of $M_t$ if and only if $t \in \{0,\frac{\pi}{4}\}$. These two values for $t$
correspond exactly to the boundary of the closed positive Weyl chamber $\bar{C}^+(\Lambda)$, and
therefore to the two types of singular geodesics on $SU_{2,m}/S(U_2U_m)$.

The quaternionic K\"{a}hler structure ${\mathfrak J}$ on $SU_{2,m}/S(U_2U_m)$ is determined by the
transformations ${\rm ad}(Q)$ on ${\mathfrak p}$ with
$$Q \in \left\{ \left. \begin{pmatrix} A & 0_{2,m} \\
0_{m,2} & 0_{m,m} \end{pmatrix}
\right|  A \in {\mathfrak s}{\mathfrak u}_2 \right\} \subset {\mathfrak k}.$$
We now investigate the maximal quaternionic subbundle ${\mathcal Q}_t$ of $TM_t$. For $t = 0$ we have $H_0 = e_1$ and
$${\mathfrak J}H_0 = {\mathfrak p}_{2\epsilon_1} \oplus ({\mathfrak J}H_0 \cap
({\mathfrak p}_{\epsilon_1 - \epsilon_2}
\oplus {\mathfrak p}_{\epsilon_1 + \epsilon_2})).$$
Using Table \ref{table2} we see that ${\mathfrak J}e_1$ is invariant under the shape operator of $M_0$.
This implies that the maximal quaternionic subbundle ${\mathcal Q}_0$ of $TM_0$ is invariant under the shape
operator of $M_0$. Next, for $t = \frac{\pi}{4}$ we have $H_{\frac{\pi}{4}} = \frac{1}{\sqrt{2}}(e_1 + e_2)$. In this case
we get
$${\mathfrak J}H_{\frac{\pi}{4}} = {\mathfrak p}_{\epsilon_1 + \epsilon_2} \oplus ({\mathfrak J}H_{\frac{\pi}{4}} \cap
({\mathfrak p}_{2\epsilon_1} \oplus {\mathfrak p}_{2\epsilon_2})),$$
which is contained in the $\sqrt{2}$-eigenspace of the shape operator according to Table \ref{table2}.
It follows that the maximal quaternionic subbundle ${\mathcal Q}_{\frac{\pi}{4}}$
of $TM_{\frac{\pi}{4}}$ is invariant under the shape
operator of $M_{\frac{\pi}{4}}$. Finally, for $0 < t < \frac{\pi}{4}$ we see that
$${\mathfrak J}H_t \subset {\mathfrak p}_{2\epsilon_1} \oplus {\mathfrak p}_{2\epsilon_2} \oplus
{\mathfrak p}_{\epsilon_1 - \epsilon_2} \oplus {\mathfrak p}_{\epsilon_1 + \epsilon_2}.$$
We see from Table \ref{table1} and Table \ref{table2} that the four root spaces we just listed
correspond to distinct principal curvatures, and ${\mathfrak J}H_t$ is not equal to the sum of any three of them.
We thus conclude that for $0 < t < \frac{\pi}{4}$
the maximal quaternionic subbundle of $TM_t$ is not invariant under the shape operator of $M_t$.

We finally note that the angle between $JH_t$ and ${\mathfrak J}H_t$
is equal to $2t$. Therefore the horospheres with a singular point at
infinity are characterized by the geometric property that their
normal vectors $H$ satisfy $JH \in {\mathfrak J}H$ or $JH \perp
{\mathfrak J}H$. Since horospheres in $SU_{2,m}/S(U_2U_m)$ are
homogeneous hypersurfaces, and isometries of $SU_{2,m}/S(U_2U_m)$
preserve angles as well as complex and quaternionic subspaces, it
follows that the horospheres with a singular point at infinity can
be characterized by the property that $JH \in {\mathfrak J}H$ or $JH
\perp {\mathfrak J}H$ for some nonzero normal vector. This finishes
the proof of Theorem \ref{horosphereintro}.

\section{Curvature of $SU_{2,m}/S(U_2U_m)$} \label{curvature}

In this section we review some facts about the curvature of the Riemannian symmetric
space $SU_{2,m}/S(U_2U_m)$ equipped with the Killing metric $g$. We denote by $R$ the Riemannian
curvature tensor of $SU_{2,m}/S(U_2U_m)$ with the convention $R(X,Y) = \nabla_X\nabla_Y -
\nabla_Y\nabla_X - \nabla_{[X,Y]}$ for all vector fields $X,Y$ on $SU_{2,m}/S(U_2U_m)$,
where $\nabla$ is the Levi Civita covariant derivative of $SU_{2,m}/S(U_2U_m)$.
Locally the Riemannian curvature tensor $R$ can be expressed entirely in terms of the metric $g$, the
complex structure $J$, and the quaternionic K\"{a}hler structure ${\mathfrak J}$:
\begin{eqnarray*}
R(X,Y)Z & = & -\frac{1}{2} \biggl\lbrack
g(Y,Z)X - g(X,Z)Y  + \ g(JY,Z)JX - g(JX,Z)JY - 2g(JX,Y)JZ \\
& & \qquad + \ \sum_{\nu=1}^3 \left\{g(J_\nu Y,Z)J_\nu X - g(J_\nu X,Z)J_\nu Y -
2g(J_\nu X,Y)J_\nu Z\right\} \\
& & \qquad + \ \sum_{\nu=1}^3 \left\{g(J_\nu JY,Z)J_\nu JX -
g(J_\nu JX,Z)J_\nu JY\right\} \biggr\rbrack\ ,
\end{eqnarray*}
where $J_1,J_2,J_3$ is a canonical local basis of ${\mathfrak J}$. The Riemannian curvature tensor for
the compact symmetric space $SU_{2+m}/S(U_2U_m)$ was calculated explicitly by the first author in
\cite{Be}. The concept of duality between symmetric spaces of compact and noncompact type implies that the
Riemannian curvature tensors of $SU_{2+m}/S(U_2U_m)$ and $SU_{2,m}/S(U_2U_m)$ just differ by sign.
The factor $\frac{1}{2}$ is a consequence of choosing the Killing metric.
The sectional curvature $K$
of the symmetric space $SU_{2,m}/S(U_2U_m)$ equipped with the Killing metric $g$ is bounded by $-4 \leq K \leq 0$.
The sectional curvature $-4$ is obtained for all $2$-planes ${\mathbb C}X$ where $X$ is a nonzero
vector with $JX \in {\mathfrak J}X$.

The Jacobi operator with respect to $X$ is the selfadjoint endomorphism defined by $R_XY = R(Y,X)X$.
We will need later the eigenvalues, eigenspaces and multiplicities of $R_X$ in case $X$ is a singular
unit tangent vector. As we remarked above, there are two types of singular tangent vectors, namely of
type $JX \perp {\mathfrak J}X$ and $JX \in {\mathfrak J}X$. In the second case we can write
$JX = J_1X$ with some almost Hermitian structure $J_1 \in {\mathfrak J}$.

\begin{tb} \label{Jacobi}
The eigenvalues, eigenspaces and multiplicites of the Jacobi operator $R_X$ for a singular
unit tangent vector $X$ are given by
\smallskip
\begin{center}
\begin{tabular}{|l|l|l|l|}
\hline
\mbox{type} &\mbox{eigenvalue} & \mbox{eigenspace} & \mbox{multiplicity} \\
\hline
$JX \perp {\mathfrak J}X$ & $0$ & ${\mathbb R}X \oplus {\mathfrak J}JX$ & $4$\\
& $-\frac{1}{2}$ & $({\mathbb R}X \oplus {\mathbb R}JX \oplus {\mathfrak J}X \oplus {\mathfrak J}JX)^\perp$ & $4m-8$ \\
& $-2$ & ${\mathbb R}JX \oplus {\mathfrak J}X$ & $4$\\
\hline
$JX = J_1X \in {\mathfrak J}X$ & $0$ & ${\mathbb R}X \oplus \{Y \mid Y \perp {\mathbb H}X,JY = -J_1Y\}$ & $2m-1$\\
& $-1$ & $({\mathbb H}X \ominus {\mathbb C}X) \oplus \{Y \mid Y \perp {\mathbb H}X,JY = J_1Y\}$ & $2m$ \\
& $-4$ & ${\mathbb R}JX $ & $1$\\
\hline
\end{tabular}
\end{center}
\end{tb}

\section{The action of $SU_{2,m-1}$ on $SU_{2,m}/S(U_2U_m)$} \label{tube1}

In this section we investigate the action of $SU_{2,m-1}$ on $SU_{2,m}/S(U_2U_m)$. It is clear that
${\mathfrak s}{\mathfrak u}_{2,m-1}$ is invariant under $\theta$, and hence the orbit $W = SU_{2,m-1} \cdot o$ through
$o$ is a totally geodesic submanifold of $SU_{2,m}/S(U_2U_m)$. The isotropy subgroup at $o$
can easily seen to be equal to $S(U_2U_{m-1})$, and therefore $W = SU_{2,m-1}/S(U_2U_{m-1})$.
The tangent space $T_oW$ and the normal space $\nu_oW$ are given by
\begin{eqnarray*}
T_oW & = & \left\{ \left. \begin{pmatrix} 0_{2,2} & C & 0_{2,1} \\
C^* & 0_{m-1,m-1} & 0_{m-1,1} \\
0_{1,2} & 0_{1,m-1} & 0_{1,1} \end{pmatrix}
\right|  C \in M_{2,m-1}({\mathbb C})  \right\}, \\
\nu_oW & = & \left\{ \left. \begin{pmatrix} 0_{2,2} & 0_{2,m-1} & D \\
0_{m-1,2} & 0_{m-1,m-1} & 0_{m-1,1} \\
D^* & 0_{1,m-1} & 0_{1,1} \end{pmatrix}
\right|  D \in M_{2,1}({\mathbb C}) \cong {\mathbb C}^2 \right\},
\end{eqnarray*}
and the isotropy subalgebra is
$$
\left\{ \left. \begin{pmatrix} A & 0_{2,m-1} & 0_{2,1} \\
0_{m-1,2} & B & 0_{m-1,1} \\
0_{1,2} & 0_{1,m-1} & 0_{1,1}   \end{pmatrix}
\right|  A \in {\mathfrak u}_2,\ B \in {\mathfrak u}_{m-1},\ {\rm tr}(A) + {\rm tr}(B) = 0 \right\}.
$$
From this we see that the slice representation of the isotropy subgroup on the normal space $\nu_oW$
is conjugate to the standard $U_2$-action on ${\mathbb C}^2$. Since $U_2$ acts transitively on the
unit sphere in ${\mathbb C}^2$, we conclude that the action of $SU_{2,m-1}$ on $SU_{2,m}/S(U_2U_m)$
is of cohomogeneity one, that is, the codimension of a generic orbit is one. This implies that the principal
orbits of the $SU_{2,m-1}$-action on $SU_{2,m}/S(U_2U_m)$ are the tubes around the totally geodesic
submanifold $W = SU_{2,m-1}/S(U_2U_{m-1})$. We now proceed with calculating the principal curvatures and the
corresponding principal curvature spaces and multiplicities of the tube $W_r$ of radius $r \in {\mathbb R}_+$ around $W$.

Using the explicit description of the complex structure $J$ and the quaternionic K\"{a}hler structure ${\mathfrak J}$
given in Section \ref{horo} we see that the $4$-dimensional normal space $\nu_oW$ is invariant under both $J$ and ${\mathfrak J}$.
This implies that every normal vector $N$ in $\nu_oW$ is singular of type $JN \in {\mathfrak J}N$.
We fix a unit normal vector $N \in \nu_oW$ and denote by $\gamma : {\mathbb R} \to SU_{2,m}/S(U_2U_m)$
the geodesic with $\gamma(0) = o$ and $\dot{\gamma}(0) = N$. The tangent vector $\dot{\gamma}(r)$ is a unit normal
vector of the tube $W_r$ at $\gamma(r)$, and we denote by $A_r$ the shape operator of $W_r$ with
respect to $-\dot{\gamma}(r)$. By $\dot{\gamma}^\perp$ we denote the subbundle of the tangent bundle of
$SU_{2,m}/S(U_2U_m)$ along $\gamma$ consisting of all hyperplanes orthogonal to $\dot{\gamma}$, and by $R_\gamma^\perp$
we denote the Jacobi operator $R(\cdot,\dot{\gamma})\dot{\gamma}$ restricted to $\dot{\gamma}^\perp$.
Now consider the ${\rm End}(\dot{\gamma}^\perp)$-valued ordinary differential equation
$$ Y^{\prime\prime} + R_\gamma^\perp \circ Y = 0\ ,\ Y(0) = \begin{pmatrix} I_{4m-4} & 0_{4m-4,3} \\
0_{3,4m-3} & 0_{3,3}  \end{pmatrix}\ ,\ Y^\prime (0) = \begin{pmatrix} 0_{4m-4,4m-4} & 0_{4m-4,3} \\
0_{3,4m-3} & I_3  \end{pmatrix}, $$
where the decomposition of the matrices is with respect to the parallel translation along $\gamma$ of the decomposition
$\dot{\gamma}^\perp (0) = T_oW \oplus (\nu_oW \cap \dot{\gamma}(0)^\perp)$.
There exists a unique solution $D$ of this differential equation, and the shape operator can be calculated
by means of $A_r = D^\prime(r) \circ D^{-1}(r)$. A straightforward calculation gives the following table

\begin{tb} \label{table4} The principal curvatures and their eigenspaces and multiplicities of the tube $W_r$ with radius $r \in {\mathbb R}_+$
around the totally geodesic submanifold $W = SU_{2,m-1}/S(U_2U_{m-1})$ in $SU_{2,m}/S(U_2U_m)$ are given by
\smallskip
\begin{center}
\begin{tabular}{|l|l|l|}
\hline
\mbox{principal curvature} & \mbox{eigenspace} & \mbox{multiplicity} \\
\hline
$0$ & $\parallel_r\{Y \in T_oW \mid JY = -J_1Y\}$ & $2m-2$\\
$\tanh(r)$ & $\parallel_r\{Y \in T_oW \mid JY = J_1Y\}$ & $2m-2$ \\
$\coth(r)$ & $\parallel_r{\mathbb H}N \ominus {\mathbb C}N$ & $2$\\
$2\coth(2r)$ & $\parallel_r{\mathbb R}JN$ & $1$ \\
\hline
\end{tabular}
\end{center}
\end{tb}

In this table $J_1 \in {\mathfrak J}$ denotes the almost Hermitian structure such that $JN = J_1N$,
and $\parallel_r$ denotes parallel translation along $\gamma$ from $T_oSU_{2,m}/S(U_2U_m)$
to $T_{\gamma(r)}SU_{2,m}/S(U_2U_m)$. We denote by ${\mathcal C}_r$ and ${\mathcal Q}_r$ the
maximal complex subbundle and the maximal quaternionic subbundle of $TW_r$ respectively.
For a principal curvature $\lambda$ we denote by $E_\lambda$ the corresponding eigenspace. Since
both the K\"{a}hler structure and the quaternionic K\"{a}hler structure are invariant under
parallel translation, Table \ref{table4} shows that
\begin{eqnarray*}
{\mathcal Q}_r & = & E_0 \oplus E_{\tanh(r)}, \\
{\mathcal C}_r & = & E_0 \oplus E_{\tanh(r)} \oplus E_{\coth(r)}.
\end{eqnarray*}
This proves that both ${\mathcal C}_r$ and ${\mathcal Q}_r$ are invariant under the shape operator. We
summarize this in

\begin{thm} \label{grassmann}
Let $W_r$ be the tube of radius $r \in {\mathbb R}_+$ around the totally geodesic submanifold
$W = SU_{2,m-1}/S(U_2U_{m-1})$ in $SU_{2,m}/S(U_2U_m)$. The maximal complex subbundle ${\mathcal C}_r$
and the maximal quaternionic subbundle ${\mathcal Q}_r$ of $TW_r$ are both invariant under the
shape operator of $W_r$, that is, $h({\mathcal C}_r,{\mathcal C}_r^\perp) = 0$ and $h({\mathcal Q}_r,{\mathcal Q}_r^\perp) = 0$.
\end{thm}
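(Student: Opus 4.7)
My plan is to identify $\mathcal{C}_r$ and $\mathcal{Q}_r$ explicitly as direct sums of the principal curvature spaces already tabulated in Table \ref{table4}. Once this is done the theorem is immediate, because a subbundle that decomposes as a sum of eigenspaces of a selfadjoint operator is automatically invariant under that operator, and $h(\mathcal{V},\mathcal{V}^\perp) = g(A_r\mathcal{V},\mathcal{V}^\perp)$ vanishes whenever $\mathcal{V}$ is $A_r$-invariant. So no new analytic work is required; the task is purely algebraic bookkeeping against the table.

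Let $N \in \nu_o W$ be the unit normal with $\dot\gamma(0)=N$. As noted in the paragraph establishing Table \ref{table4}, the invariance of $\nu_o W$ under $J$ and $\mathfrak{J}$ forces $N$ to be singular of type $JN = J_1 N$ for some $J_1 \in \mathfrak{J}$, and the unit normal field of $W_r$ along $\gamma$ is (up to sign) $\parallel_r N$. Since $\nabla J = 0$ and $\mathfrak{J}$ is parallel as a subbundle, $J$ and $\mathfrak{J}$ commute with $\parallel_r$. Therefore at $\gamma(r)$
$$\mathcal{C}_r^\perp = \mathbb{R}J(\parallel_r N) = \parallel_r(\mathbb{R}J_1 N) = \parallel_r(\mathbb{R}JN), \qquad \mathcal{Q}_r^\perp = \mathfrak{J}(\parallel_r N) = \parallel_r\mathfrak{J}N.$$
Using the orthogonal splitting $\mathfrak{J}N = \mathbb{R}J_1 N \oplus (\mathbb{H}N \ominus \mathbb{C}N)$ together with Table \ref{table4}, I then read off
$$\mathcal{C}_r^\perp = E_{2\coth(2r)}, \qquad \mathcal{Q}_r^\perp = E_{2\coth(2r)} \oplus E_{\coth(r)}.$$

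Taking orthogonal complements inside $T_{\gamma(r)}W_r$ yields the desired identifications
$$\mathcal{C}_r = E_0 \oplus E_{\tanh(r)} \oplus E_{\coth(r)}, \qquad \mathcal{Q}_r = E_0 \oplus E_{\tanh(r)},$$
both of which are direct sums of principal curvature spaces of $A_r$ and thus $A_r$-invariant. This gives $h(\mathcal{C}_r,\mathcal{C}_r^\perp)=0$ and $h(\mathcal{Q}_r,\mathcal{Q}_r^\perp)=0$, proving the theorem. The only mildly delicate point — certainly not an obstacle — is making sure that the parallel-translated eigenspaces in Table \ref{table4}, which are defined in terms of the pointwise relation ``$JY = \pm J_1 Y$'' at $o$, are compatible with the identifications above; this is exactly where the parallelism of $J$ and $\mathfrak{J}$ is used, since it lets all ambient algebraic relations at $o$ be transported unchanged along $\gamma$.
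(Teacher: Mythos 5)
Your proof is correct and follows essentially the same route as the paper: both identify $\mathcal{C}_r$ and $\mathcal{Q}_r$ as the sums $E_0 \oplus E_{\tanh(r)} \oplus E_{\coth(r)}$ and $E_0 \oplus E_{\tanh(r)}$ of eigenspaces from Table \ref{table4}, using that $J$ and $\mathfrak{J}$ commute with parallel translation along $\gamma$, after which $A_r$-invariance is immediate. Your explicit computation of the complements $\mathcal{C}_r^\perp = E_{2\coth(2r)}$ and $\mathcal{Q}_r^\perp = E_{2\coth(2r)} \oplus E_{\coth(r)}$ via the splitting $\mathfrak{J}N = \mathbb{R}JN \oplus (\mathbb{H}N \ominus \mathbb{C}N)$ is just a slightly more detailed version of the same bookkeeping.
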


\section{The action of $Sp_{1,n}$ on $SU_{2,2n}/S(U_2U_{2n})$} \label{tube2}

In this section we investigate the action of $Sp_{1,n}$ on $SU_{2,2n}/S(U_2U_{2n})$. We realize
${\mathfrak s}{\mathfrak p}_{1,n}$ as a subalgebra of ${\mathfrak s}{\mathfrak u}_{2,2n}$ by means
of
$$
{\mathfrak s}{\mathfrak p}_{1,n} = \left\{ \left. \begin{pmatrix}
ix & z & C_1 & C_2 \\
-\bar{z} & -ix & \bar{C}_2 & -\bar{C}_1 \\
C_1^* & \bar{C}_2^* & B_1 & B_2 \\
C_2^* & -\bar{C}_1^* & -\bar{B}_2 & \bar{B}_1 \\
\end{pmatrix} \right|
\begin{matrix}
x \in {\mathbb R},\ z \in {\mathbb C},\ C_1,C_2 \in M_{1,n}({\mathbb C}) \\
B_1 \in {\mathfrak u}_n,\ B_2 \in M_{n,n}({\mathbb C})\ {\rm symmetric}
\end{matrix}
\right\}.
$$
Clearly, ${\mathfrak s}{\mathfrak p}_{1,n}$ in invariant under the Cartan
involution $\theta$ on ${\mathfrak s}{\mathfrak u}_{2,2n}$. Therefore the
orbit $W = Sp_{1,n} \cdot o$ through
$o$ is a totally geodesic submanifold of $SU_{2,2n}/S(U_2U_{2n})$. The isotropy subalgebra at $o$
is
$$
\left\{ \left. \begin{pmatrix}
ix & z & 0_{1,n} & 0_{1.n} \\
-\bar{z} & -ix & 0_{1,n} & 0_{1,n} \\
0_{1,n} & 0_{1,n} & B_1 & B_2 \\
0_{1,n} & 0_{1,n} & -\bar{B}_2 & \bar{B}_1 \\
\end{pmatrix} \right|
\begin{matrix}
x \in {\mathbb R},\ z \in {\mathbb C}, B_1 \in {\mathfrak u}_n, \\
B_2 \in M_{n,n}({\mathbb C})\ {\rm symmetric} \\
\end{matrix}
\right\} ,
$$
and is isomorphic to ${\mathfrak s}{\mathfrak p}_{1} \oplus {\mathfrak s}{\mathfrak p}_n$ by means of
\begin{eqnarray*}
{\mathfrak s}{\mathfrak p}_{1} & = & \left\{ \left. \begin{pmatrix}
ix & z  \\
-\bar{z} & -ix &  \\
\end{pmatrix} \right|
x \in {\mathbb R},\ z \in {\mathbb C} \right\}, \\
{\mathfrak s}{\mathfrak p}_n & = & \left\{ \left. \begin{pmatrix}
B_1 & B_2 \\
-\bar{B}_2 & \bar{B}_1 \\
\end{pmatrix} \right|
B_1 \in {\mathfrak u}_n,\ B_2 \in M_{n,n}({\mathbb C})\ {\rm symmetric}
\right\}.
\end{eqnarray*}
Therefore $W$ is isometric to the $n$-dimensional quaternionic hyperbolic space ${\mathbb H}H^n = Sp_{1,n}/Sp_1Sp_n$.
The tangent space $T_oW$ and the normal space $\nu_oW$ are given by
\begin{eqnarray*}
T_oW & = & \left\{ \left. \begin{pmatrix}
0 & 0 & C_1 & C_2 \\
0 & 0 & \bar{C}_2 & -\bar{C}_1 \\
C_1^* & \bar{C}_2^* & 0_{n,n} & 0_{n,n} \\
C_2^* & -\bar{C}_1^* & 0_{n,n} & 0_{n,n} \\
\end{pmatrix} \right|
C_1,C_2 \in M_{1,n}({\mathbb C})
\right\} \cong {\mathbb H}^n, \\
\nu_oW & = & \left\{ \left. \begin{pmatrix}
0 & 0 & C_1 & C_2 \\
0 & 0 & -\bar{C}_2 & \bar{C}_1 \\
C_1^* & -\bar{C}_2^* & 0_{n,n} & 0_{n,n} \\
C_2^* & \bar{C}_1^* & 0_{n,n} & 0_{n,n} \\
\end{pmatrix} \right|
C_1,C_2 \in M_{1,n}({\mathbb C})
\right\} \cong {\mathbb H}^n.
\end{eqnarray*}

A straightforward calculation shows that the slice representation
of the isotropy subgroup $Sp_1Sp_n$ on $\nu_oW$ is conjugate
to the standard representation of $Sp_1Sp_n$ on ${\mathbb H}^n$.
Since $Sp_1Sp_n$ acts transitively on the
unit sphere in ${\mathbb H}^n$, we conclude that the action of $Sp_{1,n}$ on $SU_{2,2n}/S(U_2U_{2n})$
is of cohomogeneity one. This implies that the principal
orbits of the $Sp_{1,n}$-action on $SU_{2,2n}/S(U_2U_{2n})$ are the tubes around the totally geodesic
submanifold $W = {\mathbb H}H^n$. We now proceed with calculating the principal curvatures and the
corresponding principal curvature spaces and multiplicities of the tube $W_r$ of radius $r \in {\mathbb R}_+$ around $W$.
This can be done as in the previous section. The only difference is that here the ordinary differential
equation is given by
$$ Y^{\prime\prime} + R_\gamma^\perp \circ Y = 0\ ,\ Y(0) = \begin{pmatrix} I_{4n} & 0_{4n,4n-1} \\
0_{4n-1,4n} & 0_{4n-1,4n-1}  \end{pmatrix}\ ,\ Y^\prime (0) = \begin{pmatrix} 0_{4n,4n} & 0_{4n,4n-1} \\
0_{4n-1,4n} & I_{4n-1}  \end{pmatrix}. $$

\begin{tb} \label{table5} The principal curvatures and their eigenspaces and multiplicities of the tube $W_r$ with radius $r \in {\mathbb R}_+$
around the totally geodesic submanifold $W = Sp_{1,n}/Sp_1Sp_n = {\mathbb H}H^n$ in $SU_{2,2n}/S(U_2U_{2n})$ are given by
\smallskip
\begin{center}
\begin{tabular}{|l|l|l|}
\hline
\mbox{principal curvature} & \mbox{eigenspace} & \mbox{multiplicity} \\
\hline
$0$ & $\parallel_r {\mathfrak J}JN$ & $3$\\
$\frac{1}{\sqrt{2}}\tanh(\frac{1}{\sqrt{2}}r)$ & $\parallel_r T_oW \ominus {\mathbb H}JN$ & $4n-4$ \\
$\sqrt{2}\tanh(\sqrt{2}r)$ & $\parallel_r {\mathbb R}JN$ & $1$ \\
$\frac{1}{\sqrt{2}}\coth(\frac{1}{\sqrt{2}}r)$ & $\parallel_r \nu_oW \ominus {\mathbb H}N$ & $4n-4$ \\
$\sqrt{2}\coth(\sqrt{2}r)$ & $\parallel_r {\mathfrak J}N$ & $3$ \\
\hline
\end{tabular}
\end{center}
\end{tb}

We denote by ${\mathcal C}_r$ and ${\mathcal Q}_r$ the
maximal complex subbundle and the maximal quaternionic subbundle of $TW_r$ respectively.
By inspection of Table \ref{table5} we obtain
\begin{eqnarray*}
{\mathcal Q}_r & = & E_0 \oplus E_{\frac{1}{\sqrt{2}}\tanh(\frac{1}{\sqrt{2}}r)}
\oplus E_{\sqrt{2}\tanh(\sqrt{2}r)} \oplus E_{\frac{1}{\sqrt{2}}\coth(\frac{1}{\sqrt{2}}r)}, \\
{\mathcal C}_r & = & E_0 \oplus E_{\frac{1}{\sqrt{2}}\tanh(\frac{1}{\sqrt{2}}r)}
\oplus E_{\frac{1}{\sqrt{2}}\coth(\frac{1}{\sqrt{2}}r)} \oplus E_{\sqrt{2}\coth(\sqrt{2}r)}.
\end{eqnarray*}
This proves that both ${\mathcal C}_r$ and ${\mathcal Q}_r$ are invariant under the shape operator. We
summarize this in

\begin{thm} \label{hyperbolic}
Let $W_r$ be the tube of radius $r \in {\mathbb R}_+$ around the totally geodesic submanifold
$W = Sp_{1,n}/Sp_1Sp_n = {\mathbb H}H^n$ in $SU_{2,2n}/S(U_2U_{2n})$. The maximal complex subbundle ${\mathcal C}_r$
and the maximal quaternionic subbundle ${\mathcal Q}_r$ of $TW_r$ are both invariant under the
shape operator of $W_r$, that is, $h({\mathcal C}_r,{\mathcal C}_r^\perp) = 0$ and $h({\mathcal Q}_r,{\mathcal Q}_r^\perp) = 0$.
\end{thm}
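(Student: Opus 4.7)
The plan is to adapt the argument used in the proof of Theorem \ref{grassmann} and reduce the statement, via Table \ref{table5}, to an identification of the geometric subbundles ${\mathcal C}_r$ and ${\mathcal Q}_r$ with suitable direct sums of principal curvature eigenspaces. Because the K\"{a}hler structure $J$ and the quaternionic K\"{a}hler structure ${\mathfrak J}$ are parallel, they commute with the parallel translation $\parallel_r$ along $\gamma$, so the entire identification can be carried out at $o$.

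The first step is to pin down the type of the unit normal vector $N \in \nu_oW$ with respect to the classification of singular tangent vectors. Since $W = {\mathbb H}H^n$ is a quaternionic K\"{a}hler submanifold of $SU_{2,2n}/S(U_2U_{2n})$, the bundle ${\mathfrak J}$ preserves the splitting $T_oSU_{2,2n}/S(U_2U_{2n}) = T_oW \oplus \nu_oW$, hence ${\mathfrak J}N \subset \nu_oW$. On the other hand, the explicit matrix description of $T_oW$, $\nu_oW$ and $J = {\rm ad}(Z)$ (see Section \ref{horo}) shows by a direct computation that $JN \in T_oW$. These two facts give $JN \perp {\mathfrak J}N$, so $N$ is singular of the second type. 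This matches the fact that the hyperbolic functions appearing in Table \ref{table5} have arguments $\sqrt{c}\,r$ with $c \in \{1/2,2\}$, precisely the nonzero Jacobi operator eigenvalues $-1/2$ and $-2$ read off from Table \ref{Jacobi} in that case.

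The second step is to translate this into a statement about the normal line bundles of ${\mathcal C}_r$ and ${\mathcal Q}_r$ in $TW_r$. At $\gamma(r)$ the unit normal of $W_r$ is $\dot\gamma(r) = \parallel_r N$, and consequently
$$
{\mathcal C}_r^\perp = {\mathbb R}J\dot\gamma(r) = \parallel_r {\mathbb R}JN, \qquad {\mathcal Q}_r^\perp = {\mathfrak J}\dot\gamma(r) = \parallel_r {\mathfrak J}N.
$$
From the previous step ${\mathbb R}JN \subset T_oW$ and ${\mathfrak J}N \subset \nu_oW$, so an inspection of Table \ref{table5} identifies $\parallel_r {\mathbb R}JN$ with the eigenspace $E_{\sqrt{2}\tanh(\sqrt{2}r)}$ and $\parallel_r {\mathfrak J}N$ with the eigenspace $E_{\sqrt{2}\coth(\sqrt{2}r)}$. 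Hence ${\mathcal C}_r$ and ${\mathcal Q}_r$ are the orthogonal complements of these eigenspaces inside $TW_r$, which produces the two decompositions displayed immediately before the theorem and shows at once that both subbundles are $A_r$-invariant.

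The only real obstacle is the verification, at the Lie-algebra level, that $JN \in T_oW$ and that the lines ${\mathbb R}JN$ and ${\mathfrak J}N$ sit in the principal curvature eigenspaces $E_{\sqrt{2}\tanh(\sqrt{2}r)}$ and $E_{\sqrt{2}\coth(\sqrt{2}r)}$ respectively; this is a routine check using the explicit matrix models of $T_oW$, $\nu_oW$, $Z$, and ${\rm ad}({\mathfrak s}{\mathfrak u}_2)$ set up in the previous sections. Once this bookkeeping is in place, Theorem \ref{hyperbolic} follows directly from Table \ref{table5}.
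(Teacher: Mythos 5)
Your proposal is correct and follows essentially the same route as the paper: both arguments reduce the theorem to the observation that, since $J$ and ${\mathfrak J}$ are parallel, ${\mathcal C}_r^\perp = \parallel_r{\mathbb R}JN$ and ${\mathcal Q}_r^\perp = \parallel_r{\mathfrak J}N$ are exactly the eigenspaces $E_{\sqrt{2}\tanh(\sqrt{2}r)}$ and $E_{\sqrt{2}\coth(\sqrt{2}r)}$ in Table \ref{table5}, so ${\mathcal C}_r$ and ${\mathcal Q}_r$ are sums of eigenspaces of the shape operator. Your additional verification that $N$ is singular of type $JN \perp {\mathfrak J}N$ (via ${\mathfrak J}\nu_oW \subset \nu_oW$ and $J\nu_oW \subset T_oW$) is a correct elaboration of what the paper leaves implicit in the derivation of the table.
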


\section{Proof of Theorem \ref{mainresult}} \label{proof}

Let $M$ be a connected hypersurface in $SU_{2,m}/S(U_2U_m)$ and assume that the maximal
complex subbundle ${\mathcal C}$ and the maximal quaternionic subbundle ${\mathcal Q}$ of $TM$ are
invariant under the shape operator $A$ of $M$, that is,
$h({\mathcal C},{\mathcal C}^\perp) = 0$ and $h({\mathcal Q},{\mathcal Q}^\perp) = 0$.
The induced Riemannian metric on $M$ will also be denoted by $g$,
and $\nabla$ denotes the Riemannian connection of $(M,g)$.
Let $N$ be a local unit normal field of $M$ and $A$ the
shape operator of $M$ with respect to $N$.
Since all our calculations are of local nature, we will assume for simplicity that $N$ and other
objects as local canonical bases of ${\mathfrak J}$ are globally defined on $M$.
The K\"{a}hler structure $J$ of $SU_{2,m}/S(U_2U_m)$ induces on
$M$ an almost contact metric structure
$(\phi,\xi,\eta,g)$, where the vector field $\xi$ on $M$ is defined by $\xi = -JN$,
the one-form $\eta$ on $M$ is defined by $\eta(X) = g(X,\xi)$, and the tensor field
$\phi$ on $M$ is defined by $\phi X = JX - \eta(X)N$. Furthermore, let $J_1,J_2,J_3$ be a canonical
local basis of ${\mathfrak J}$. Then each $J_\nu$ induces an almost contact
metric structure $(\phi_\nu,\xi_\nu,\eta_\nu,g)$ on $M$.
The following identities are easy to establish and are used frequently throughout this section:
$$
\phi_{\nu+1}\xi_\nu = -\xi_{\nu+2}\ ,\ \phi_\nu\xi_{\nu+1} =
 \xi_{\nu+2}\ ,\
\phi\xi_\nu = \phi_\nu\xi\ ,\ \eta_\nu(\phi X) = \eta(\phi_\nu
 X)\ .
$$
Here, and below, the index $\nu$ is to be taken modulo $3$. Using the explicit expression
for the Riemannian curvature tensor of $SU_{2,m}/S(U_2U_m)$ given in Section \ref{curvature}, we can
write the Codazzi equation as
\begin{eqnarray*}
(\nabla_XA)Y - (\nabla_YA)X
&=& -\frac{1}{2} \biggl\lbrack \eta(X)\phi Y - \eta(Y)\phi X - 2g(\phi X,Y)\xi \\
&& \qquad + \sum_{\nu=1}^3\{ \eta_\nu(X)\phi_\nu Y - \eta_\nu(Y)\phi_\nu
 X - 2g(\phi_\nu X,Y)\xi_\nu \} \\
&& \qquad + \sum_{\nu=1}^3 \{\eta(\phi_\nu X)\phi_\nu\phi Y
- \eta(\phi_\nu Y)\phi_\nu\phi X \} \\
&& \qquad + \sum_{\nu=1}^3 \{ \eta(X)\eta(\phi_\nu Y)
- \eta(Y)\eta(\phi_\nu X)\} \xi_\nu\ \biggl\rbrack.
\end{eqnarray*}
The Codazzi equation is a substantial tool for establishing relations between principal curvatures.

\begin{prop} \label{pcC}
Assume that the maximal complex subbundle ${\mathcal C}$ of $TM$ is
invariant under the shape operator $A$ of $M$. Then $\xi$ is a principal curvature
vector field on $M$, say $A\xi = \alpha\xi$. Moreover, if $X \in {\mathcal C}$ is a principal curvature vector of $M$,
say $AX = \lambda X$, then
$$
(2\lambda-\alpha)A\phi X + (1 - \alpha\lambda)\phi X =
\sum_{\nu=1}^3 \{ 2\eta(\xi_\nu)\eta(\phi_\nu X)\xi -
\eta_\nu(X)\phi_\nu\xi  - \eta(\phi_\nu X)\xi_\nu -
\eta(\xi_\nu)\phi_\nu X \} .
$$
\end{prop}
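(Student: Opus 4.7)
The first assertion is immediate: since $A$ is $g$-symmetric and $\mathcal{C}$ is $A$-invariant by hypothesis, its rank-one orthogonal complement $\mathcal{C}^{\perp}=\mathbb{R}\xi$ is also $A$-invariant, so $A\xi=\alpha\xi$ for a smooth function $\alpha$.

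For the main identity my plan is to combine the Codazzi equation above with the self-adjointness of the $(1,1)$-tensor $\nabla_\xi A$ (which is guaranteed by the symmetry of $A$ and metric compatibility of $\nabla$). I would first substitute $Y=\xi$ into Codazzi and expand $(\nabla_X A)\xi$ via the standard identities $\nabla_X\xi=\phi AX$ and $A\xi=\alpha\xi$, together with $AX=\lambda X$, to obtain
\[
(\nabla_X A)\xi = (X\alpha)\xi + \alpha\lambda\phi X - \lambda A\phi X.
\]
Writing $R(X,Y)$ for the right-hand side of the Codazzi equation as displayed above the proposition, the analogous expansion with $X$ replaced by an arbitrary vector $Y$ gives a parallel expression for $(\nabla_Y A)\xi$, and Codazzi itself rearranges into a formula for $(\nabla_\xi A)Y$.

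The key step is to compute $g((\nabla_\xi A)X,Y)$ in two ways: directly from the Codazzi formula for $X$, and through the symmetry $g((\nabla_\xi A)X,Y)=g(X,(\nabla_\xi A)Y)$ combined with the Codazzi formula for $Y$. Using $\eta(X)=0$, $AX=\lambda X$, and the skew-symmetry of $\phi$ with respect to $g$, equating the two expressions will isolate $g(A\phi X,Y)$ with coefficient $2\lambda-\alpha$:
\[
(2\lambda-\alpha)\,g(A\phi X,Y) = (X\alpha)\eta(Y)+\alpha\lambda\,g(\phi X,Y)-g(R(X,\xi),Y)+g(X,R(Y,\xi)).
\]
Adding $(1-\alpha\lambda)g(\phi X,Y)$ to both sides then converts the left-hand side into $g((2\lambda-\alpha)A\phi X+(1-\alpha\lambda)\phi X,Y)$; on the right, the $\xi$-component of Codazzi $(X,\xi)$ supplies the auxiliary identity $X\alpha=2\sum_\nu\eta(\xi_\nu)\eta(\phi_\nu X)$, which makes $(X\alpha)\eta(Y)$ match the $2\sum_\nu\eta(\xi_\nu)\eta(\phi_\nu X)\eta(Y)$ piece of the target.

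The final step will be to substitute the explicit expressions for $R(X,\xi)$ and $R(Y,\xi)$ and simplify. Using the contact identities $\phi_\nu\xi=\phi\xi_\nu$, $\eta_\nu(\phi Z)=\eta(\phi_\nu Z)$, $g(\phi_\nu\xi,Z)=-\eta(\phi_\nu Z)$, and the skewness of $\phi$ and $\phi_\nu$, the remaining $g(\phi X,Y)$ contributions should cancel and the leftover terms assemble into $g(T,Y)$, where $T$ denotes the tangent vector on the right-hand side of the proposition. Since the resulting identity holds for every $Y$, the stated vector equation follows. The main obstacle is the algebraic bookkeeping in this last step: after specialization, the two Codazzi right-hand sides contain numerous summands whose pairwise inner products must be tracked and cancelled using the contact identities, but each reduction is mechanical.
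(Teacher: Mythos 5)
Your proposal is correct and follows essentially the same route as the paper: the Codazzi equation combined with the self-adjointness of $\nabla A$, the identity $\nabla_X\xi=\phi AX$, and the auxiliary gradient formula $X\alpha=2\sum_\nu\eta(\xi_\nu)\eta(\phi_\nu X)$ obtained from the $\xi$-component. The only cosmetic difference is that you apply Codazzi to the pairs $(X,\xi)$ and $(Y,\xi)$ and invoke the symmetry of $\nabla_\xi A$, whereas the paper contracts the $(X,Y)$ Codazzi equation with $\xi$ and uses the symmetry of $\nabla_XA$ and $\nabla_YA$; the two arrangements agree by the first Bianchi identity, and your intermediate displayed formula is consistent with the stated conclusion.
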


\begin{proof}
Since the tangent bundle $TM$ decomposes orthogonally into $TM = {\mathbb R}\xi \oplus {\mathcal C}$, it is clear that
the assumption $A{\mathcal C} \subset {\mathcal C}$ implies $A\xi = \alpha\xi$ for some smooth function $\alpha$ on $M$.
Using the Codazzi equation we get for arbitrary tangent vector fields $X$ and $Y$ that
\begin{eqnarray*}
&& g(\phi X,Y) - \sum_{\nu = 1}^3 \{\eta_\nu(X)\eta(\phi_\nu Y)
- \eta_\nu(Y)\eta(\phi_\nu X) - g(\phi_\nu X,Y)\eta(\xi_\nu)\} \\
&=& g((\nabla_XA)Y - (\nabla_YA)X, \xi) \\
&=& g((\nabla_XA)\xi,Y) - g((\nabla_YA)\xi,X) \\
&=& (X\alpha)\eta(Y) - (Y\alpha)\eta(X) + \alpha g((A\phi + \phi
A)X,Y) - 2g(A\phi AX,Y) .
\end{eqnarray*}
For $X = \xi$ this equation yields
\begin{equation}
Y\alpha = (\xi\alpha)\eta(Y) + 2\sum_{\nu = 1}^3
\eta(\xi_\nu)\eta(\phi_\nu Y). \label{gradient}
\end{equation}
Inserting this and the corresponding equation for $X\alpha$ into the previous equation gives
\begin{eqnarray*}
&& g(\phi X,Y) - \sum_{\nu = 1}^3 \{ \eta_\nu(X)\eta(\phi_\nu Y) -
\eta_\nu(Y)\eta(\phi_\nu X)
- g(\phi_\nu X,Y)\eta(\xi_\nu)\} \\
& =&   2\sum_{\nu=1}^3 \{ \eta(Y)\eta(\phi_\nu X) -
\eta(X)\eta(\phi_\nu Y) \} \eta(\xi_\nu) +  \alpha g((A\phi + \phi
A)X,Y) - 2g(A\phi AX,Y) .
\end{eqnarray*}
If we now insert $X \in {\mathcal C}$ with $AX = \lambda X$, the equation in Proposition \ref{pcC}
follows easily.
\end{proof}

Since the quaternionic K\"{a}hler structure structure ${\mathfrak J}$ is invariant under parallel
translation on $SU_{2,m}/S(U_2U_m)$, there exist one-forms $q_1,q_2,q_3$ such that
$$
\bar\nabla_X J_\nu = q_{\nu+2}(X)J_{\nu+1} - q_{\nu+1}(X)J_{\nu+2}
$$
for all vector fields $X$ on $SU_{2,m}/S(U_2U_m)$, where $\bar\nabla$ is the Levi Civita covariant derivative
on $SU_{2,m}/S(U_2U_m)$.

\begin{prop} \label{pcQ}
Assume that the maximal quaternionic subbundle ${\mathcal Q}$ of $TM$ is
invariant under the shape operator $A$ of $M$. Then there exists a canonical local basis $J_1,J_2,J_3$
of ${\mathfrak J}$ such that $\xi_\nu$ is a principal curvature vector field of $M$, say $A\xi_\nu = \beta_\nu \xi_\nu$.
Moreover, if $X \in {\mathcal Q}$ is a principal curvature vector of $M$,
say $AX = \lambda X$, then
\begin{eqnarray*}
&& (2\lambda - \beta_\nu)A\phi_\nu X + (1-\lambda\beta_\nu)\phi_\nu X \\
&= & \{2\eta(\xi_\nu)\eta(\phi_\nu X) - \eta(\xi_{\nu+1})\eta(\phi_{\nu+1} X)
- \eta(\xi_{\nu+2})\eta(\phi_{\nu+2} X)\}\xi_\nu \\
&& + (\beta_\nu - \beta_{\nu+1})q_{\nu+2}(X)\xi_{\nu+1}
- (\beta_\nu - \beta_{\nu+2})q_{\nu+1}(X)\xi_{\nu+2} \\
&& - \eta(\xi_\nu)\phi X - \eta(\phi_\nu X)\xi
- \eta(X)\phi\xi_\nu + \eta(\phi_{\nu+2} X)\phi\xi_{\nu+1}
- \eta(\phi_{\nu+1} X)\phi\xi_{\nu+2}
\end{eqnarray*}
holds for all $\nu \in \{1,2,3\}$.
\end{prop}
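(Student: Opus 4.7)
The plan is to mirror the argument of Proposition \ref{pcC}, with the complex structure $J$ replaced throughout by the triple $J_1,J_2,J_3$ of a canonical local basis of $\mathfrak{J}$.

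First, a tangent vector $X$ lies in $\mathcal{Q}$ if and only if $\eta_\nu(X)=g(X,\xi_\nu)=0$ for every $\nu\in\{1,2,3\}$, so that $\mathcal{Q}^\perp$ is pointwise spanned by $\xi_1,\xi_2,\xi_3$ for any canonical basis $J_1,J_2,J_3$. The hypothesis $A\mathcal{Q}\subset\mathcal{Q}$ and the self-adjointness of $A$ give $A\mathcal{Q}^\perp\subset\mathcal{Q}^\perp$, so $A|_{\mathcal{Q}^\perp}$ is a symmetric endomorphism of a rank-three bundle. Since an $SO(3)$-rotation of the canonical basis rotates the frame $(\xi_1,\xi_2,\xi_3)$ by the same matrix, I can choose the canonical basis to diagonalise $A|_{\mathcal{Q}^\perp}$ locally, obtaining $A\xi_\nu=\beta_\nu\xi_\nu$ (with the standard caveat at points where the eigenvalue multiplicities jump).

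For the identity, I would use the formula $\bar\nabla_X J_\nu=q_{\nu+2}(X)J_{\nu+1}-q_{\nu+1}(X)J_{\nu+2}$ recorded just above, together with the Weingarten formula $\bar\nabla_X N=-AX$, to derive
$$\nabla_X\xi_\nu=q_{\nu+2}(X)\xi_{\nu+1}-q_{\nu+1}(X)\xi_{\nu+2}+\phi_\nu AX,$$
and then differentiate $A\xi_\nu=\beta_\nu\xi_\nu$ to compute $(\nabla_XA)\xi_\nu$; this already produces the cross-terms $(\beta_\nu-\beta_{\nu+1})q_{\nu+2}(X)\xi_{\nu+1}$ and $-(\beta_\nu-\beta_{\nu+2})q_{\nu+1}(X)\xi_{\nu+2}$ that appear in the final formula, alongside $(X\beta_\nu)\xi_\nu$ and $\beta_\nu\phi_\nu AX-A\phi_\nu AX$. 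I would then pair the Codazzi equation with $\xi_\nu$ and rewrite the left-hand side via self-adjointness as $g(Y,(\nabla_XA)\xi_\nu)-g(X,(\nabla_YA)\xi_\nu)$. Setting $X=\xi_\nu$ in the resulting identity yields the quaternionic analogue of the gradient equation (\ref{gradient}), expressing $Y\beta_\nu$ in terms of $\eta$- and $\eta_\mu$-quantities. Substituting this back to eliminate the derivatives of $\beta_\nu$, and then specialising to $X\in\mathcal{Q}$ with $AX=\lambda X$ (so that $\eta_\mu(X)=0$ for all $\mu$), I would collapse the remaining terms using the standard identities $\phi\xi_\mu=\phi_\mu\xi$, $\phi_\nu\xi_{\nu+1}=\xi_{\nu+2}$, $\phi_{\nu+1}\xi_\nu=-\xi_{\nu+2}$, and $\eta_\nu(\phi X)=\eta(\phi_\nu X)$, reassembling exactly the claimed equation.

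The main difficulty is bookkeeping rather than conceptual: the three-dimensional nature of $\mathcal{Q}^\perp$ and the presence of the three connection one-forms $q_1,q_2,q_3$ generate a substantially larger collection of terms than in the complex case of Proposition \ref{pcC}, and the cyclic cross-terms involving the $q_\mu$ must be tracked carefully through the Codazzi identity. Conceptually, however, once the analogue of equation (\ref{gradient}) is in hand the remainder parallels the complex case closely.
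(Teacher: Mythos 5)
Your proposal is correct and follows essentially the same route as the paper: diagonalise $A$ on the rank-three bundle ${\mathcal Q}^\perp$ by rotating the canonical basis, pair the Codazzi equation with $\xi_\nu$ using $\nabla_X\xi_\nu = q_{\nu+2}(X)\xi_{\nu+1}-q_{\nu+1}(X)\xi_{\nu+2}+\phi_\nu AX$ (which is exactly where the $(\beta_\nu-\beta_{\nu\pm1})q_\mu$ cross-terms arise), extract $Y\beta_\nu$ by setting $X=\xi_\nu$, substitute back, and specialise to $X\in{\mathcal Q}$ with $AX=\lambda X$. This is precisely the computation carried out in the paper's proof.
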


\begin{proof}
Since the tangent bundle $TM$ decomposes orthogonally into $TM = {\mathfrak J}N \oplus {\mathcal Q}$, it is clear that
the assumption $A{\mathcal Q} \subset {\mathcal Q}$ implies that there exists a canonical local basis $J_1,J_2,J_3$
of ${\mathfrak J}$ such that $A\xi_\nu = \beta_\nu \xi_\nu$ for some functions $\beta_1,\beta_2,\beta_3$ on $M$.
Using the Codazzi equation we get for arbitrary tangent vector fields $X$ and $Y$ that
\begin{eqnarray*}
&& g(\phi X,Y)\eta(\xi_\nu) + g(\phi_\nu X,Y) -\eta(X)\eta_\nu(\phi Y) + \eta(Y)\eta_\nu(\phi X) \\
&&  - \eta_{\nu+1}(X)\eta_{\nu+2}(Y) + \eta_{\nu+1}(Y)\eta_{\nu+2}(X) \\
&& - \eta(\phi_{\nu+1} X)\eta(\phi_{\nu+2} Y) + \eta(\phi_{\nu+1} Y)\eta(\phi_{\nu+2} X)  \\
& = & g((\nabla_XA)Y - (\nabla_YA)X, \xi_\nu) \\
& = & g((\nabla_XA)\xi_\nu,Y) - g((\nabla_YA)\xi_\nu,X) \\
& = & (X\beta_\nu)\eta_\nu(Y) - (Y\beta_\nu)\eta_\nu(X) + \beta_\nu g((A\phi_\nu + \phi_\nu A)X,Y)
- 2g(A\phi_\nu AX,Y) \\
&& + (\beta_\nu - \beta_{\nu+1})\{q_{\nu+2}(X)\eta_{\nu+1}(Y)
 - q_{\nu+2}(Y)\eta_{\nu+1}(X)\} \\
&& - (\beta_\nu - \beta_{\nu+2})\{q_{\nu+1}(X)\eta_{\nu+2}(Y)
 - q_{\nu+1}(Y)\eta_{\nu+2}(X)\} .
\end{eqnarray*}
For $X = \xi_\nu$ this equation yields
\begin{eqnarray*}
Y\beta_\nu & = & (\xi_\nu\beta_\nu)\eta_\nu(Y) +
2\eta(\xi_\nu)\eta(\phi_\nu Y)
- \eta(\xi_{\nu+1})\eta(\phi_{\nu+1} Y) - \eta(\xi_{\nu+2})\eta(\phi_{\nu+2} Y) \\
&& + (\beta_\nu - \beta_{\nu+1})q_{\nu+2}(\xi_\nu)\eta_{\nu+1}(Y)
- (\beta_\nu - \beta_{\nu+2})q_{\nu+1}(\xi_\nu)\eta_{\nu+2}(Y) .
\end{eqnarray*}
Inserting this and the corresponding equation for $X\beta_\nu$ into the previous equation gives
\begin{eqnarray*}
&& g(\phi X,Y)\eta(\xi_\nu) + g(\phi_\nu X,Y) -\eta(X)\eta(\phi_\nu Y) + \eta(Y)\eta(\phi_\nu X) \\
&&  - \eta_{\nu+1}(X)\eta_{\nu+2}(Y) + \eta_{\nu+1}(Y)\eta_{\nu+2}(X) \\
&& - \eta(\phi_{\nu+1} X)\eta(\phi_{\nu+2} Y) + \eta(\phi_{\nu+1} Y)\eta(\phi_{\nu+2} X)  \\
& =& \beta_\nu g((A\phi_\nu + \phi_\nu A)X,Y) - 2g(A\phi_\nu AX,Y) \\
&& -\eta(\xi_{\nu+1})\{\eta(\phi_{\nu+1} X)\eta_\nu(Y) - \eta(\phi_{\nu+1} Y)\eta_\nu(X)\} \\
&& - \eta(\xi_{\nu+2})\{ \eta(\phi_{\nu+2} X)\eta_\nu(Y) - \eta(\phi_{\nu+2} Y)\eta_\nu(X)\} \\
&& - 2\eta(\xi_\nu)\{\eta(\phi_\nu Y)\eta_\nu(X) - \eta(\phi_\nu X)\eta_\nu(Y)\big\} \\
&& + (\beta_\nu - \beta_{\nu+1}) \{q_{\nu+2}(\xi_\nu)(\eta_{\nu+1}(X)\eta_\nu(Y) - \eta_{\nu+1}(Y)\eta_\nu(X)) \\
&& \qquad \qquad \qquad + q_{\nu+2}(X)\eta_{\nu+1}(Y) - q_{\nu+2}(Y)\eta_{\nu+1}(X)\} \\
&& - (\beta_\nu - \beta_{\nu+2}) \{q_{\nu+1}(\xi_\nu)(\eta_{\nu+2}(X)\eta_\nu(Y) - \eta_{\nu+2}(Y)\eta_\nu(X)) \\
&& \qquad \qquad \qquad + q_{\nu+1}(X)\eta_{\nu+2}(Y) - q_{\nu+1}(Y)\eta_{\nu+2}(X)\big\}.
\end{eqnarray*}
If we now insert $X \in {\mathcal Q}$ with $AX = \lambda X$, the equation in Proposition \ref{pcQ}
follows by a straightforward calculation.
\end{proof}

We will now combine the two previous results.

\begin{prop} \label{key}
Assume that the maximal complex subbundle ${\mathcal C}$ of $TM$ and the
maximal quaternionic subbundle ${\mathcal Q}$ of $TM$ are both
invariant under the shape operator $A$ of $M$. Then the normal bundle $\nu M$
of $M$ consists of singular tangent vectors of $SU_{2,m}/S(U_2U_m)$.
\end{prop}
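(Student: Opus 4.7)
The plan is to combine Propositions \ref{pcC} and \ref{pcQ} to force $\xi = -JN$ to lie either entirely in $\mathcal{Q}^\perp = \mathfrak{J}N$ or entirely in $\mathcal{Q}$, which is precisely the condition $JN \in \mathfrak{J}N$ or $JN \perp \mathfrak{J}N$ that characterises singular normals.

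First, from the two propositions I obtain a canonical local basis $J_1, J_2, J_3$ of $\mathfrak{J}$ and smooth functions $\alpha, \beta_1, \beta_2, \beta_3$ on $M$ with $A\xi = \alpha\xi$ and $A\xi_\nu = \beta_\nu \xi_\nu$. I then orthogonally decompose
\[
\xi = \zeta + \xi_0, \qquad \zeta := \sum_{\nu=1}^{3}\eta(\xi_\nu)\,\xi_\nu \in \mathcal{Q}^\perp, \qquad \xi_0 \in \mathcal{Q}.
\]
Self-adjointness of $A$ together with the assumption $A\mathcal{Q} \subset \mathcal{Q}$ gives $A\mathcal{Q}^\perp \subset \mathcal{Q}^\perp$, so comparing the $\mathcal{Q}$- and $\mathcal{Q}^\perp$-components of $A\xi = \alpha\xi$ yields immediately
\[
A\xi_0 = \alpha\,\xi_0, \qquad (\beta_\nu - \alpha)\,\eta(\xi_\nu) = 0 \quad (\nu=1,2,3).
\]
In particular, if $\xi_0 \neq 0$ then it is a principal vector in $\mathcal{Q}$ of eigenvalue $\alpha$, and Proposition \ref{pcQ} becomes applicable to $X = \xi_0$.

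The heart of the argument is then to show that $\eta(\xi_\nu)\,\xi_0 = 0$ for every $\nu$; from this the proposition follows at once, since either $\xi_0 = 0$ and $JN \in \mathfrak{J}N$, or every $\eta(\xi_\nu)$ vanishes and $JN \perp \mathfrak{J}N$. I plan to substitute $X = \xi_0$, $\lambda = \alpha$ into the identity of Proposition \ref{pcQ}, using $\eta_\nu(\xi_0) = 0$ (since $\xi_0 \in \mathcal{Q}$ is orthogonal to each $\xi_\nu$) and $(\beta_\nu - \alpha)\eta(\xi_\nu) = 0$ to simplify the left-hand side. Pairing the resulting equation with each of $\xi, \xi_1, \xi_2, \xi_3$ and exploiting the elementary relations $\phi\xi_\nu = \phi_\nu \xi$, $\phi_{\nu+1}\xi_\nu = -\xi_{\nu+2}$, $\phi_\nu \xi_{\nu+1} = \xi_{\nu+2}$ collected just before Proposition \ref{pcC} should collapse the identity into scalar relations between $\eta(\xi_\nu)$ and $|\xi_0|^2$. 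If needed, these are supplemented by the scalar identities obtained analogously by feeding the $\mathcal{C}$-component $\xi_\nu - \eta(\xi_\nu)\,\xi$ of $\xi_\nu$ into Proposition \ref{pcC}; that vector lies in $\mathcal{C}$ and is a principal vector, its eigenvalue being $\beta_\nu$ if $\eta(\xi_\nu) = 0$ and $\alpha = \beta_\nu$ otherwise.

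The main obstacle I anticipate is the algebraic bookkeeping: the right-hand side of Proposition \ref{pcQ} contains nine terms, three of which involve the connection one-forms $q_1, q_2, q_3$, and those must be eliminated by comparing the identities for different indices $\nu$ (so that the outcome is genuinely equivariant in the quaternionic index). Once the key relation $\eta(\xi_\nu)\,\xi_0 = 0$ is extracted, the dichotomy described above yields the singularity of $N$ at every point of $M$ and thus completes the proof.
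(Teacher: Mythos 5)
Your proposal is correct and follows essentially the same route as the paper: decompose $\xi$ into its ${\mathfrak J}N$- and ${\mathcal Q}$-components, observe that both are $\alpha$-eigenvectors of $A$, feed the ${\mathcal Q}$-component into Proposition \ref{pcQ}, and eliminate the one-forms $q_\nu$ by comparing two quaternionic indices. The bookkeeping you defer does close exactly as you anticipate: the paper (after normalizing the ${\mathcal Q}^\perp$-part to be proportional to $\xi_3$) pairs the $\nu=1$ identity with $\xi_2$ and the $\nu=2$ identity with $\xi_1$ to obtain $\eta(X)\eta(Z)=(\beta_2-\beta_1)q_3(X)$ and $\eta(X)\eta(Z)=(\beta_1-\beta_2)q_3(X)$, whose sum forces $\eta(X)\eta(Z)=0$.
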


\begin{proof}
By Propositions \ref{pcC} and \ref{pcQ} we can assume that $A\xi =
\alpha \xi$ and $A\xi_\nu = \beta_\nu\xi_\nu$. Since $TM$ decomposes
orthogonally into $TM = {\mathfrak J}N \oplus {\mathcal Q}$, there
exist unit vectors $Z \in {\mathfrak J}N$ and $X \in {\mathcal Q}$
such that $\xi = \eta(Z)Z + \eta(X)X$. We then get
$$
\alpha\eta(Z)Z + \alpha\eta(X)X = \alpha\xi = A\xi = \eta(Z)AZ +
\eta(X)AX.
$$
Since both ${\mathfrak J}N$ and ${\mathcal Q}$ are invariant under
$A$, we have $AZ \in {\mathfrak J}N$ and $AX \in {\mathcal Q}$.

Assume that $\eta(X)\eta(Z) \neq 0$. The previous equation then
implies $AZ = \alpha Z$ and $AX = \alpha X$, and thus we can insert
$X$ into the equation of Proposition \ref{pcQ}. Without loss of
generality we may assume that $Z = \xi_3$. Taking the inner product
of the equation for the index $\nu = 1$ in Proposition \ref{pcQ}
with $\xi_2$ leads to
$$
 \eta(X)\eta(Z) = (\beta_2 - \beta_1)q_3(X) ,
$$
and taking the inner product of the equation for the index $\nu = 2$
in Proposition \ref{pcQ} with  $\xi_1$ gives
$$
  \eta(X)\eta(Z) = (\beta_1 - \beta_2)q_3(X) ,
$$
Adding up the previous two equations yields $\eta(X)\eta(Z) = 0$,
which contradicts the assumption $\eta(X)\eta(Z) \neq 0$. Therefore
we must have $\eta(X)\eta(Z) = 0$, which means that $\xi$ is tangent
to ${\mathfrak J}N$ or tangent to ${\mathcal Q}$. Since $\xi = -JN$
this implies that either $JN \in {\mathfrak J}N$ or $JN \in
{\mathcal Q} \perp {\mathfrak J}N$, and we conclude that $N$ is a
singular tangent vector of $SU_{2,m}/S(U_2U_m)$ at each point.
\end{proof}

We proceed by investigating separately the two types of singular
tangent vectors.

\subsection{The case $JN \perp {\mathfrak J}N$} \label{case2}

In this subsection we assume that $JN \perp {\mathfrak J}N$. In this
situation the vector fields
$\xi,\xi_1,\xi_2,\xi_3,\phi\xi_1,\phi\xi_2,\phi\xi_3$ are
orthonormal.

\begin{lm} \label{lemma1}
For each $\nu \in \{1,2,3\}$ we have $A\phi\xi_\nu = \gamma_\nu\phi\xi_\nu$, and one of the following two cases holds:
\begin{itemize}
\item[(i)] $\alpha\beta_\nu = 2$ and $\gamma_\nu = 0$,
\item[(ii)] $\alpha = \beta_\nu \neq 0$ and $\gamma_\nu = \frac{\alpha^2 -
2}{\alpha}$.
\end{itemize}
\end{lm}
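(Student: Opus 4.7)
The plan is to feed two carefully chosen vectors into Propositions \ref{pcC} and \ref{pcQ} to obtain a pair of coupled equations for $A\phi\xi_\nu$, and then to compare them. The orthonormality of $\xi,\xi_1,\xi_2,\xi_3,\phi\xi_1,\phi\xi_2,\phi\xi_3$ recorded at the start of the subsection gives $\eta(\xi_\mu)=0$ and $\phi\xi_\nu\perp\xi_\mu$ for all $\mu,\nu$; consequently $\xi\in\mathcal{Q}$, while $g(J\xi_\nu,N)=-g(JJ_\nu N,N)=g(J_\nu N,JN)=0$ shows $\xi_\nu\in\mathcal{C}$. So both propositions are available when we choose $X\in\{\xi,\xi_\nu\}$.

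Plugging $X=\xi_\nu$ with $\lambda=\beta_\nu$ into Proposition \ref{pcC}, every term carrying a factor $\eta(\xi_\mu)$ vanishes; a brief check using $\phi_\nu\xi_\nu=0$ and $\phi_{\nu\pm1}\xi_\nu=\mp\xi_{\nu\pm2}$ shows $\eta(\phi_\mu\xi_\nu)=0$ for every $\mu$, and the only surviving contribution is $-\eta_\nu(\xi_\nu)\phi_\nu\xi=-\phi\xi_\nu$. Hence, writing $\phi X=\phi\xi_\nu$, Proposition \ref{pcC} collapses to
\[
(2\beta_\nu-\alpha)\,A\phi\xi_\nu=(\alpha\beta_\nu-2)\,\phi\xi_\nu. \qquad(\mathrm{B})
\]
Plugging $X=\xi$ with $\lambda=\alpha$ into Proposition \ref{pcQ}, the factors $\eta(\xi_\mu)$ and $\eta(\phi_\mu\xi)=g(\phi\xi_\mu,\xi)$ kill everything except $-\eta(X)\phi\xi_\nu=-\phi\xi_\nu$ and the two $q$-terms pointing along $\xi_{\nu\pm1}$. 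Taking inner products of the resulting identity with $\xi_{\nu+1}$ and $\xi_{\nu+2}$, and using selfadjointness of $A$ together with $\phi\xi_\nu\perp\xi_\mu$, forces $(\beta_\nu-\beta_{\nu+1})q_{\nu+2}(\xi)=0$ and $(\beta_\nu-\beta_{\nu+2})q_{\nu+1}(\xi)=0$, so the $q$-terms drop out and Proposition \ref{pcQ} reduces to
\[
(2\alpha-\beta_\nu)\,A\phi\xi_\nu=(\alpha\beta_\nu-2)\,\phi\xi_\nu. \qquad(\mathrm{A})
\]

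Subtracting $(\mathrm{A})$ from $(\mathrm{B})$ gives $3(\alpha-\beta_\nu)\,A\phi\xi_\nu=0$. In the branch $\alpha=\beta_\nu$ either equation reads $\alpha\,A\phi\xi_\nu=(\alpha^2-2)\phi\xi_\nu$; the choice $\alpha=0$ would force the unit vector $\phi\xi_\nu$ to vanish, so $\alpha=\beta_\nu\neq 0$ and $\gamma_\nu=(\alpha^2-2)/\alpha$, which is case (ii). In the branch $\alpha\neq\beta_\nu$ we get $A\phi\xi_\nu=0$, and substituting this back into $(\mathrm{A})$ forces $\alpha\beta_\nu=2$, which is case (i); the degenerate subcases where one of the multipliers $2\alpha-\beta_\nu$ or $2\beta_\nu-\alpha$ happens to vanish are harmless, since the equation with the remaining, nonzero multiplier then directly delivers both $A\phi\xi_\nu=0$ and $\alpha\beta_\nu=2$. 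The main obstacle in carrying this out is the careful bookkeeping required to show that, among the many summands in Propositions \ref{pcC} and \ref{pcQ}, every one except the handful listed above vanishes, and in particular that the stray $q$-contributions in Proposition \ref{pcQ} are truly killed by the selfadjointness argument.
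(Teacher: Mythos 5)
Your proof is correct and takes essentially the same route as the paper: insert $X=\xi_\nu$ into Proposition \ref{pcC} and $X=\xi$ into Proposition \ref{pcQ} to obtain $(2\beta_\nu-\alpha)A\phi\xi_\nu=(\alpha\beta_\nu-2)\phi\xi_\nu$ and $(2\alpha-\beta_\nu)A\phi\xi_\nu=(\alpha\beta_\nu-2)\phi\xi_\nu$, then compare (your case split on $\alpha=\beta_\nu$ versus the paper's on $\alpha\beta_\nu=2$ is an immaterial reorganization). You are in fact more careful than the paper, which silently drops the $q$-terms in Proposition \ref{pcQ}; your selfadjointness argument justifying this is a genuine, if small, improvement in rigor.
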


\begin{proof} When we insert $X = \xi_\nu$ with $A\xi_\nu =
\beta_\nu\xi_\nu$ into the equation in Proposition \ref{pcC}, we get
$$
(2\beta_\nu - \alpha) A\phi\xi_\nu = (\alpha \beta_\nu -
2)\phi\xi_\nu,
$$
and when we insert $X = \xi$ with $A\xi = \alpha\xi$ into the
equation in Proposition \ref{pcQ}, we get
$$
(2\alpha - \beta_\nu) A\phi\xi_\nu = (\alpha \beta_\nu -
2)\phi\xi_\nu.
$$
These two equations imply (i) for $\alpha\beta_\nu = 2$ and (ii) for $\alpha\beta_\nu \neq 2$.
\end{proof}

\begin{lm} \label{lemma2}
We have $\beta_1 = \beta_2 = \beta_3 =:\beta$, $\gamma_1 = \gamma_2 = \gamma_3 =: \gamma = 0$ and $\alpha\beta = 2$.
\end{lm}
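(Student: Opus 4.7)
The plan is to extract one cyclic relation linking $(\beta_1,\beta_2,\beta_3)$ and $(\gamma_1,\gamma_2,\gamma_3)$ by applying Proposition \ref{pcQ} to a \emph{cross} vector, and then to use the dichotomy of Lemma \ref{lemma1} to rule out every configuration apart from the claimed one. The key input is Proposition \ref{pcQ} with $X = \phi\xi_{\nu+1} \in \mathcal{C}\cap\mathcal{Q}$, $\lambda = \gamma_{\nu+1}$, and index $\nu$. Since $JN \perp \mathfrak{J}N$ gives $\eta(\xi_\mu) = 0$ for every $\mu$, we have $\phi\xi_\mu = J\xi_\mu$ and $\phi\xi_\mu \perp \xi_\rho$ for all $\mu,\rho$; by Lemma \ref{lemma1} each $\phi\xi_\mu$ is also a principal vector with eigenvalue $\gamma_\mu$. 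Using $J\circ J_\nu = J_\nu\circ J$ and $\phi_\nu\xi_{\nu+1} = \xi_{\nu+2}$, one gets $\phi_\nu X = J\xi_{\nu+2} = \phi\xi_{\nu+2}$, so the left-hand side of Proposition \ref{pcQ} reduces to $[(2\gamma_{\nu+1}-\beta_\nu)\gamma_{\nu+2} + 1 - \gamma_{\nu+1}\beta_\nu]\,\phi\xi_{\nu+2}$. On the right-hand side every term containing a factor $\eta(\xi_\mu)$ vanishes, and among the coefficients $\eta(\phi_\mu X)$ only $\eta(\phi_{\nu+1}\phi\xi_{\nu+1}) = \eta(-\xi) = -1$ survives, contributing an additional $\phi\xi_{\nu+2}$; the only remaining nonzero pieces are the two $q$-terms, which lie in the span of $\xi_{\nu+1}$ and $\xi_{\nu+2}$. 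Projecting onto $\phi\xi_{\nu+2}$ (which is orthogonal to $\xi_{\nu+1}$ and $\xi_{\nu+2}$) yields the cyclic cross-relation
\[
 2\gamma_{\nu+1}\gamma_{\nu+2} \;=\; \beta_\nu(\gamma_{\nu+1}+\gamma_{\nu+2}), \qquad \nu \in \{1,2,3\}.
\]

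I would then combine this with Lemma \ref{lemma1}, according to which for each $\nu$ the pair $(\gamma_\nu,\beta_\nu)$ equals either $(0,2/\alpha)$ or $((\alpha^2-2)/\alpha,\alpha)$, and in both cases $\alpha \neq 0$. Assume for contradiction that some $\gamma_\nu$ is nonzero. If all three are nonzero, then $\beta_\nu = \alpha$ for every $\nu$ and $\gamma_1 = \gamma_2 = \gamma_3 = (\alpha^2-2)/\alpha =: \gamma$; the cross-relation becomes $2\gamma^2 = 2\alpha\gamma$, forcing $\gamma = \alpha$ and hence $\alpha^2 - 2 = \alpha^2$, contradiction. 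If instead the vanishing set of the $\gamma$'s is nonempty and proper, then there exist consecutive indices (mod $3$) $k,k+1$ with exactly one of $\gamma_k,\gamma_{k+1}$ equal to zero; taking $\nu = k-1$ in the cross-relation reduces it to $0 = \beta_{k-1}\gamma_*$ with $\beta_{k-1} \in \{\alpha, 2/\alpha\}$ nonzero and $\gamma_* \neq 0$, again a contradiction. Therefore $\gamma_1 = \gamma_2 = \gamma_3 = 0$, and Lemma \ref{lemma1} then gives $\beta_\nu = 2/\alpha$ for every $\nu$ (in the boundary case $\alpha^2 = 2$ the two alternatives of Lemma \ref{lemma1} coincide). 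So $\beta_1 = \beta_2 = \beta_3 =: \beta$ and $\alpha\beta = 2$.

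The main obstacle is the right-hand-side simplification in Proposition \ref{pcQ} with $X = \phi\xi_{\nu+1}$: seven of the ten schematic terms must be shown to vanish, and this requires repeated use of $\eta(\xi_\mu) = 0$, $\phi\xi_\mu = J\xi_\mu$, $\phi_\mu\phi\xi_\mu = -\xi$, and $J_\mu\xi_{\mu+1} = \xi_{\mu+2}$, identities that rely critically on $JN \perp \mathfrak{J}N$. Once the cross-relation has been isolated, the remaining case analysis is entirely combinatorial.
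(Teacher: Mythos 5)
Your proposal is correct and follows essentially the same route as the paper: it derives the cyclic relation $2\gamma_{\nu+1}\gamma_{\nu+2} = \beta_\nu(\gamma_{\nu+1}+\gamma_{\nu+2})$ by evaluating Proposition \ref{pcQ} at $X=\phi\xi_{\nu+1}$ with $\lambda=\gamma_{\nu+1}$, and then combines it with the dichotomy of Lemma \ref{lemma1} (in particular $\beta_\nu\neq 0$) to exclude every case except $\gamma_1=\gamma_2=\gamma_3=0$ and $\alpha\beta_\nu=2$. The only difference is that you carry out the right-hand-side simplification and the case analysis in more explicit detail than the paper does.
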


\begin{proof}
By evaluating the equation in Proposition \ref{pcQ} for $X = \phi\xi_{\nu+1}$
and $\lambda = \gamma_{\nu+1}$ we get
\begin{equation} \label{eq1}
2\gamma_{\nu+1}\gamma_{\nu+2} = \beta_\nu(\gamma_{\nu+1} + \gamma_{\nu+2})
\end{equation}
for all $\nu \in \{1,2,3\}$. Since $\beta_\nu \neq 0$ for all $\nu \in \{1,2,3\}$ according
to Lemma \ref{lemma1}, this implies that either $\gamma_1 = \gamma_2 = \gamma_3 = 0$ or
$\gamma_\nu \neq 0$ for all $\nu \in \{1,2,3\}$.
Assume that $\gamma_\nu \neq 0$ for all $\nu \in \{1,2,3\}$. Then, using Lemma \ref{lemma1},
we have $\gamma_\nu = \frac{\alpha^2 -
2}{\alpha}$ and $\beta_\nu = \alpha$ for all $\nu \in \{1,2,3\}$, and inserting this into (\ref{eq1})
yields $\alpha^2 = 2$. However, $\alpha^2 = 2$ implies $\gamma_\nu = 0$, which contradicts our assumption
that $\gamma_\nu \neq 0$ for all $\nu \in \{1,2,3\}$. We therefore must have
$\gamma_1 = \gamma_2 = \gamma_3 = 0$, and the assertion then follows from Lemma \ref{lemma1}.
\end{proof}

We now derive some equations for the principal curvatures
corresponding to principal curvature vectors which are orthogonal to
${\mathbb R}JN \oplus {\mathfrak J}N \oplus {\mathfrak J}JN$.
Note that the orthogonal complement of ${\mathbb R}JN \oplus {\mathfrak J}N \oplus
{\mathfrak J}JN$ in $TM$ is equal to ${\mathcal C} \cap {\mathcal Q} \cap J{\mathcal Q}$.

\begin{lm} \label{lemma3}
Let $X \in {\mathcal C} \cap {\mathcal Q} \cap J{\mathcal Q}$ with $AX = \lambda X$. Then we have
\begin{eqnarray}
(2 \lambda - \alpha)A\phi X & = & (\alpha \lambda - 1) \phi X,
\label{eq3} \\
2(\alpha\lambda - 1)A\phi_\nu X & = & (2\lambda - \alpha)\phi_\nu X.
\label{eq4}
\end{eqnarray}
\end{lm}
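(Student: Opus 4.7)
The plan is to treat both identities as immediate specializations of Propositions \ref{pcC} and \ref{pcQ}: once the particular $X$ of the lemma is substituted, both right-hand sides collapse to zero, after which the stated formulae fall out algebraically.

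First I would translate the hypothesis $X \in {\mathcal C} \cap {\mathcal Q} \cap J{\mathcal Q}$ into vanishing identities for the scalar quantities appearing in those two propositions. The decomposition $TM = {\mathbb R}\xi \oplus {\mathcal C}$ gives $\eta(X) = 0$; the decomposition $TM = {\mathfrak J}N \oplus {\mathcal Q}$ gives $\eta_\nu(X) = 0$ for $\nu \in \{1,2,3\}$; and since $\eta(X) = 0$ forces $JX = \phi X$, the condition $X \in J{\mathcal Q}$ becomes $\phi X \perp {\mathfrak J}N$, i.e.\ $\eta_\nu(\phi X) = 0$, which by the identity $\eta_\nu(\phi Y) = \eta(\phi_\nu Y)$ rewrites as $\eta(\phi_\nu X) = 0$ for all $\nu$. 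To these I would add the standing assumption $JN \perp {\mathfrak J}N$ of this subsection, which yields $\eta(\xi_\nu) = g(\xi,\xi_\nu) = 0$ for all $\nu$.

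For equation (\ref{eq3}) I would then substitute $X$ into Proposition \ref{pcC}. Every summand on its right-hand side carries a factor of $\eta(\xi_\nu)$, $\eta_\nu(X)$, or $\eta(\phi_\nu X)$, and each such factor has just been shown to vanish; the proposition therefore reduces to $(2\lambda - \alpha)A\phi X = (\alpha\lambda - 1)\phi X$. For equation (\ref{eq4}) I would substitute $X$ into Proposition \ref{pcQ}. By Lemma \ref{lemma2} all three $\beta_\nu$ coincide with a single $\beta$ satisfying $\alpha\beta = 2$, so the $q$-terms on the right, which all contain the factor $\beta_\nu - \beta_{\nu \pm 1}$, drop out, and the remaining terms each carry a factor of $\eta(\xi_\mu)$, $\eta(X)$, $\eta_\nu(X)$, or $\eta(\phi_\mu X)$, so they vanish too. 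What is left is $(2\lambda - \beta)A\phi_\nu X = (\lambda\beta - 1)\phi_\nu X$; substituting $\beta = 2/\alpha$ and multiplying through by $\alpha$ produces $2(\alpha\lambda - 1)A\phi_\nu X = (2\lambda - \alpha)\phi_\nu X$.

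There is essentially no obstacle beyond careful bookkeeping. The only real observation is that under the combined hypothesis $X \in {\mathcal C} \cap {\mathcal Q} \cap J{\mathcal Q}$, the singular-normal condition $JN \perp {\mathfrak J}N$, and the equalities from Lemma \ref{lemma2}, every scalar coefficient appearing on the right-hand sides of Propositions \ref{pcC} and \ref{pcQ} is forced to be zero, leaving exactly the two stated identities.
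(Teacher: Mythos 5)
Your proof is correct and is exactly the paper's argument: the paper's own proof simply states that (\ref{eq3}) follows from Proposition \ref{pcC} and (\ref{eq4}) from Proposition \ref{pcQ} together with $\alpha\beta=2$ from Lemma \ref{lemma2}, and your write-up supplies precisely the bookkeeping (vanishing of $\eta(X)$, $\eta_\nu(X)$, $\eta(\phi_\nu X)$, $\eta(\xi_\nu)$ and of the $q$-terms via $\beta_1=\beta_2=\beta_3$) that this relies on.
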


\begin{proof} Equation (\ref{eq3}) follows from Proposition
\ref{pcC}, and (\ref{eq4}) follows from Proposition
\ref{pcQ} using the fact that $\alpha\beta = 2$ according to Lemma
\ref{lemma2}.
\end{proof}

If we assume $2\lambda - \alpha = 0$, we get $\alpha\lambda = 1$
from (\ref{eq3}) and therefore $\alpha^2 = 2$. Since $\alpha\beta =
2$ this implies $\alpha = \beta$. For this reason we consider the
two cases $\alpha \neq \beta$ and $\alpha = \beta$ separately.

We first assume that $\alpha \neq \beta$. Then we must have
$2\lambda - \alpha \neq 0$ and therefore also $\alpha\lambda - 1
\neq 0$ by (\ref{eq4}). From (\ref{eq4}) we then get
$$
A\phi_{\nu+1} X = \frac{2\lambda - \alpha}{2(\alpha\lambda -
1)}\phi_{\nu+1} X.
$$
Applying (\ref{eq4}) to $\phi_{\nu+1}X$ we obtain
$$
A\phi_\nu\phi_{\nu+1}X = \lambda\phi_\nu\phi_{\nu+1}X.
$$
On the other hand, by (\ref{eq4}) we also have
$$
A\phi_{\nu+2} X = \frac{2\lambda - \alpha}{2(\alpha\lambda -
1)}\phi_{\nu+2} X.
$$
Since $\phi_\nu\phi_{\nu+1}X = \phi_{\nu+2}X$, the previous two
equations imply that $\lambda$ is a solution of the quadratic
equation
$$
2\alpha\lambda^2 - 4\lambda + \alpha = 0.
$$
This shows that $A$ restricted to ${\mathcal C} \cap {\mathcal Q} \cap J{\mathcal Q}$
has at most two eigenvalues.
Moreover, each solution of $2\alpha\lambda^2 - 4\lambda + \alpha =
0$ satisfies $\frac{2\lambda - \alpha}{2(\alpha\lambda - 1)} =
\lambda$, which means that the corresponding eigenspace is
${\mathfrak J}$-invariant. From (\ref{eq3}) we see that
$\phi X$ is a principal curvature vector with principal curvature
$\frac{\alpha\lambda - 1}{2\lambda - \alpha}$. If we assume that
$\frac{\alpha\lambda - 1}{2\lambda - \alpha} = \lambda$, we get
$2\lambda^2 - 2\alpha\lambda + \alpha = 0$, which together with
$2\alpha\lambda^2 - 4\lambda + \alpha = 0$ leads to $\lambda = 0$
(which leads to $\alpha = 0$ and contradicts $\alpha\beta = 2$) or
$\alpha^2 = 2$ (which because of $\alpha\beta = 2$ leads to $\alpha
= \beta$ and contradicts the assumption $\alpha \neq \beta$).
Therefore we must have $\frac{\alpha\lambda - 1}{2\lambda - \alpha}
\neq \lambda$. Altogether this shows that $A$ restricted to ${\mathcal C} \cap {\mathcal Q} \cap J{\mathcal Q}$
has precisely two eigenvalues, namely the two solutions
$\lambda_1$ and $\lambda_2$ of $2\alpha\lambda^2 - 4\lambda + \alpha
= 0$, and that $J$ maps the two eigenspaces onto each other. It is
easy to see that $\lambda_1 + \lambda_2 = \frac{2}{\alpha} = \beta
\neq 0$.

From (\ref{gradient}) we see that the gradient ${\rm
grad}^\alpha$ of $\alpha$ on $M$ satisfies ${\rm grad}^\alpha =
(\xi\alpha)\xi$. For the Hessian ${\rm hess}^\alpha(X,Y)$ of
$\alpha$ we then get
\begin{eqnarray*}
{\rm hess}^\alpha(X,Y) & = & g(\nabla_X((\xi\alpha)\xi) , Y) \\
& = & X(\xi\alpha)\eta(Y) + (\xi\alpha)g(\nabla_X\xi ,Y) \\ & = &
X(\xi\alpha)\eta(Y) + (\xi\alpha)g(\phi AX ,Y).
\end{eqnarray*}
Since the Hessian of a function is symmetric, this implies
$$
X(\xi\alpha)\eta(Y) - Y(\xi\alpha)\eta(X) + (\xi\alpha)g((A \phi +
\phi A)X ,Y) = 0.
$$
Inserting $X = \xi$ yields $Y(\xi\alpha) = \xi(\xi\alpha)\eta(Y)$,
and inserting this and the corresponding equation for $X(\xi\alpha)$
into the previous one shows that
$$
(\xi\alpha)g((A \phi + \phi A)X ,Y) = 0.
$$
As we have seen above, on ${\mathcal C} \cap {\mathcal Q} \cap J{\mathcal Q}$
we get $(A\phi + \phi A)X =
\beta X$, and since $\beta \neq 0$ we conclude that $\xi\alpha = 0$
and hence ${\rm grad}^\alpha = (\xi\alpha)\xi= 0$. Since $M$ is
connected we obtain that $\alpha$ is constant. From
$2\alpha\lambda^2 - 4\lambda + \alpha = 0$ we get $\alpha^2 < 2$,
and hence we can write $\alpha = \sqrt{2}\tanh(\sqrt{2}r)$ for some
positive real number $r$ and some suitable orientation of the normal
vector. Writing $\alpha$ in this way, the two solutions of
$2\alpha\lambda^2 - 4\lambda + \alpha = 0$ can be written as
$\lambda_1 = \frac{1}{\sqrt{2}}\tanh(\frac{1}{\sqrt{2}}r)$ and
$\lambda_2 = \frac{1}{\sqrt{2}}\coth(\frac{1}{\sqrt{2}}r)$. From
$\alpha\beta = 2$ we also get $\beta = \sqrt{2}\coth(\sqrt{2}r)$.

We now assume that $\alpha = \beta$. Since $\alpha\beta = 2$ we may
assume that $\alpha = \sqrt{2}$ (by a suitable orientation of the
normal vector). Assume that there exists a principal curvature
$\lambda$ of $A$ restricted to ${\mathcal C} \cap {\mathcal Q} \cap J{\mathcal Q}$
such that $\lambda \neq
\frac{1}{\sqrt{2}}$. From (\ref{eq3}) we then get
$$
A\phi X = \frac{\alpha\lambda - 1}{2\lambda - \alpha}\phi X =
\frac{1}{\sqrt{2}}\phi X,
$$
and from (\ref{eq4}) we obtain
$$
A\phi_\nu X = \frac{2\lambda - \alpha}{2(\alpha\lambda - 1)}\phi_\nu
X = \frac{1}{\sqrt{2}}\phi_\nu X.
$$

Thus we have proved:

\begin{prop} \label{pccase2}
One of the following three cases holds:
\begin{itemize}
\item[(i)] $M$ has five (four for $r = \sqrt{2}{\rm Artanh}(1/\sqrt{3})$ in which case $\alpha = \lambda_2$)
distinct constant principal curvatures
\begin{eqnarray*}
\alpha = \sqrt{2}\tanh(\sqrt{2}r)\ ,\ \beta =
\sqrt{2}\coth(\sqrt{2}r)\ ,\ \gamma = 0\ ,& &\\  \lambda_1 =
\frac{1}{\sqrt{2}}\tanh(\frac{1}{\sqrt{2}}r)\ ,\ \lambda_2 =
\frac{1}{\sqrt{2}}\coth(\frac{1}{\sqrt{2}}r), & &
\end{eqnarray*}
and the corresponding principal curvature spaces are
$$
T_\alpha = {\mathcal C}^\perp\ ,\ T_\beta = {\mathcal Q}^\perp\ ,\ T_\gamma =
J{\mathcal Q}^\perp = JT_\beta.
$$
The principal curvature spaces $T_{\lambda_1}$ and $T_{\lambda_2}$
are invariant under ${\mathfrak J}$ and are mapped onto each other
by $J$. In particular, the quaternionic dimension of
$SU_{2,m}/S(U_2U_m)$ must be even.
\item[(ii)] $M$ has exactly three distinct constant principal curvatures
$$
\alpha = \beta = \sqrt{2}\ ,\ \gamma = 0\ ,\ \lambda =
\frac{1}{\sqrt{2}}
$$
with corresponding principal curvature spaces
$$
T_\alpha = ({\mathcal C} \cap {\mathcal Q})^\perp\ ,\ T_\gamma =
J{\mathcal Q}^\perp\ ,\ T_\lambda = {\mathcal C} \cap {\mathcal Q} \cap J{\mathcal Q}.
$$
\item[(iii)] $M$ has at least four distinct principal curvatures,
three of which are given by
$$
\alpha = \beta = \sqrt{2}\ ,\ \gamma = 0\ ,\ \lambda =
\frac{1}{\sqrt{2}}
$$
with corresponding principal curvature spaces
$$
T_\alpha = ({\mathcal C} \cap {\mathcal Q})^\perp\ ,\ T_\gamma =
J{\mathcal Q}^\perp\ ,\ T_\lambda \subset {\mathcal C} \cap {\mathcal Q} \cap J{\mathcal Q}.
$$
If $\mu$ is another (possibly nonconstant) principal
curvature function  , then $JT_{\mu} \subset T_\lambda$
and ${\mathfrak J}T_{\mu} \subset T_\lambda$.
\end{itemize}
\end{prop}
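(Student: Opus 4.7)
The plan is to combine the structural results from Lemmas \ref{lemma1}, \ref{lemma2}, \ref{lemma3} and split the analysis according to whether $\alpha=\beta$ or $\alpha\neq\beta$. By Lemmas \ref{lemma1} and \ref{lemma2} we already know that $A\xi=\alpha\xi$, that $A\xi_\nu=\beta\xi_\nu$ for a common function $\beta$ with $\alpha\beta=2$, and that $A\phi\xi_\nu=0$. Together with the orthogonal decomposition
$TM={\mathbb R}\xi\oplus{\mathfrak J}N\oplus{\mathfrak J}JN\oplus({\mathcal C}\cap{\mathcal Q}\cap J{\mathcal Q})$,
this identifies $T_\alpha\supset\mathcal{C}^\perp$, $T_\beta\supset\mathcal{Q}^\perp$ and $T_\gamma\supset J\mathcal{Q}^\perp$, so the only remaining task is to describe the spectrum of $A$ on the distribution $\mathcal{C}\cap\mathcal{Q}\cap J\mathcal{Q}$.

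First, I would treat the case $\alpha\neq\beta$. For $X\in\mathcal{C}\cap\mathcal{Q}\cap J\mathcal{Q}$ with $AX=\lambda X$, the coefficient $2\lambda-\alpha$ cannot vanish (otherwise (\ref{eq3}) forces $\alpha\lambda=1$, hence $\alpha^2=2=\alpha\beta$, giving $\alpha=\beta$); likewise $\alpha\lambda-1\neq 0$ by (\ref{eq4}). Hence each $\phi_\nu X$ is an eigenvector with eigenvalue $(2\lambda-\alpha)/(2(\alpha\lambda-1))$. Re-applying (\ref{eq4}) to $\phi_{\nu+1}X$ and using $\phi_\nu\phi_{\nu+1}X=\phi_{\nu+2}X$ produces two expressions for the eigenvalue on $\phi_{\nu+2}X$, and equating them yields the quadratic $2\alpha\lambda^2-4\lambda+\alpha=0$; in particular $\mathfrak{J}$ preserves the eigenspaces. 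Equation (\ref{eq3}) then says $\phi X$ has eigenvalue $(\alpha\lambda-1)/(2\lambda-\alpha)$, which I would verify to differ from $\lambda$ under $\alpha\neq\beta$, so $J$ interchanges the two eigenspaces $T_{\lambda_1}$ and $T_{\lambda_2}$, with $\lambda_1+\lambda_2=2/\alpha=\beta$.

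The key technical step is then to prove $\alpha$ is constant. Starting from (\ref{gradient}) I obtain $\mathrm{grad}\,\alpha=(\xi\alpha)\xi$, hence
$\mathrm{hess}^\alpha(X,Y)=X(\xi\alpha)\eta(Y)+(\xi\alpha)g(\phi AX,Y).$
Symmetry of the Hessian, combined with $X=\xi$, gives $(\xi\alpha)\,g((A\phi+\phi A)X,Y)=0$. On $\mathcal{C}\cap\mathcal{Q}\cap J\mathcal{Q}$ we just established $A\phi+\phi A=\beta\,\mathrm{id}$ with $\beta\neq 0$, so $\xi\alpha=0$ and $\mathrm{grad}\,\alpha=0$. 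Connectedness of $M$ gives $\alpha$ constant, and the inequality $\alpha^2<2$ that follows from the quadratic allows the parametrisation $\alpha=\sqrt{2}\tanh(\sqrt{2}r)$, from which $\beta,\lambda_1,\lambda_2$ are read off. This proves (i); the exceptional coincidence $\alpha=\lambda_2$ arises from $\sqrt{2}\tanh(\sqrt{2}r)=\frac{1}{\sqrt{2}}\coth(\frac{1}{\sqrt{2}}r)$, solved by $r=\sqrt{2}\,\mathrm{Artanh}(1/\sqrt{3})$.

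Finally, in the case $\alpha=\beta$ we have $\alpha^2=\alpha\beta=2$, and orienting $N$ appropriately gives $\alpha=\sqrt{2}$. If every eigenvalue of $A$ on $\mathcal{C}\cap\mathcal{Q}\cap J\mathcal{Q}$ equals $1/\sqrt{2}$, then case (ii) is immediate. Otherwise, for an eigenvector $X$ with eigenvalue $\mu\neq 1/\sqrt{2}$, equations (\ref{eq3}) and (\ref{eq4}) both collapse (using $\alpha=\sqrt{2}$) to $A\phi X=\frac{1}{\sqrt{2}}\phi X$ and $A\phi_\nu X=\frac{1}{\sqrt{2}}\phi_\nu X$. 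Hence $JT_\mu,\,\mathfrak{J}T_\mu\subset T_{1/\sqrt{2}}$, yielding (iii). The main obstacle is the rigidity argument showing $\alpha$ is constant in case (i); the cases $\alpha=\beta$ are comparatively direct once the quadratic structure is in place.
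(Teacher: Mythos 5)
Your proposal is correct and follows essentially the same route as the paper: the same case split $\alpha\neq\beta$ versus $\alpha=\beta$ based on whether $2\lambda-\alpha$ can vanish, the same derivation of the quadratic $2\alpha\lambda^2-4\lambda+\alpha=0$ by comparing the two ways of reaching $\phi_{\nu+2}X$, the same Hessian symmetry argument to show $\alpha$ is constant, and the same collapse of (\ref{eq3}) and (\ref{eq4}) when $\alpha=\beta=\sqrt{2}$. The only cosmetic difference is that you leave the check $\frac{\alpha\lambda-1}{2\lambda-\alpha}\neq\lambda$ as a routine verification, which the paper carries out explicitly.
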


Assume that $M$ satisfies property (i) in Proposition \ref{pccase2}.
Then the quaternionic dimension of $SU_{2,m}/S(U_2U_m)$ is even, say
$m = 2n$. For $p \in M$ we denote by $c_p : {\mathbb R} \to
SU_{2,2n}/S(U_2U_{2n})$ the geodesic in $SU_{2,2n}/S(U_2U_{2n})$
with $c_p(0) = p$ and $\dot{c}_p(0) = N_p$, and define the smooth
map
$$
F : M \to SU_{2,2n}/S(U_2U_{2n}), p \mapsto c_p(r).
$$
Geometrically, $F$ is the displacement of $M$ at distance $r$ in
direction of the unit normal vector field $N$. For each $p \in M$
the differential $d_pF$ of $F$ at $p$ can be computed using Jacobi
vector fields by means of $d_pF(X) = Z_X(r)$, where $Z_X$ is the
Jacobi vector field along $c_p$ with initial value $Z_X(0) = X$ and
$Z_X^\prime(0) = -AX$. Using the explicit description of the Jacobi
operator $R_N$ given in Table \ref{Jacobi} for the case $JN \perp
{\mathfrak J}N$ we get
$$
Z_X(r) = \begin{cases} E_X(r) & ,\ {\rm if}\ X \in T_\gamma,\\
\left(\cosh\left(\frac{1}{\sqrt{2}}r\right) -
\sqrt{2}\kappa\sinh\left(\frac{1}{\sqrt{2}}r\right)\right)E_X(r) &
,\ {\rm if}\ X \in T_\kappa\ {\rm and}\ \kappa \in
\{\lambda_1,\lambda_2\},\\
\left(\cosh\left(\sqrt{2}r\right) -
\frac{\kappa}{\sqrt{2}}\sinh\left(\sqrt{2}r\right)\right)E_X(r) & ,\
{\rm if}\ X \in T_\kappa\ {\rm and}\ \kappa \in \{\alpha,\beta\},
\end{cases}
$$
where $E_X$ denotes the parallel vector field along $c_p$ with
$E_X(0) = X$. This shows that the kernel ${\rm ker}\,dF$ of $dF$ is
given by
$$
{\rm ker}\,dF = T_\beta \oplus T_{\lambda_2} = {\mathcal Q}^\perp \oplus
T_{\lambda_2},
$$
and that $F$ is of constant rank equal to the rank of the vector
bundle $T_\alpha \oplus T_\gamma \oplus T_{\lambda_1}$, which is
equal to $4n$. Thus, locally, $F$ is a submersion onto a
$4n$-dimensional submanifold $B$ of $SU_{2,2n}/S(U_2U_{2n})$.
Moreover, the tangent space of $B$ at $F(p)$ is obtained by parallel
translation of $(T_\alpha \oplus T_\gamma \oplus T_{\lambda_1})(p) =
({\mathbb H}\xi \oplus T_{\lambda_1})(p)$, which is a quaternionic
(with respect to ${\mathfrak J}$) and real (with respect to $J$)
subspace of $T_pSU_{2,2n}/S(U_2U_{2n})$. Since both $J$ and
${\mathfrak J}$ are parallel along $c_p$, also $T_{F(p)}B$ is a
quaternionic (with respect to ${\mathfrak J}$) and real (with
respect to $J$) subspace of $T_{F(p)}SU_{2,2n}/S(U_2U_{2n})$. Thus
$B$ is a quaternionic and real submanifold of
$SU_{2,2n}/S(U_2U_{2n})$. Since every quaternionic submanifold of a
quaternionic K\"{a}hler manifold is necessarily totally geodesic
(see e.g.\ \cite{G}), we see that $B$ is a totally geodesic
submanifold of $SU_{2,2n}/S(U_2U_{2n})$. The well-known concept of
duality between symmetric spaces of noncompact type and symmetric
spaces of compact type establishes a one-to-one correspondence
between totally geodesic submanifolds of a symmetric space of
noncompact type and its dual symmetric space of compact type. Using
the concept of duality between the symmetric spaces
$SU_{2,2n}/S(U_2U_{2n})$ and $SU_{2+2n}/S(U_2U_{2n})$, it follows
from the classification of totally geodesic submanifolds in complex
$2$-plane Grassmannians (see \cite{K}), that $B$ is an open part of a
quaternionic hyperbolic space ${\mathbb H}H^n$ embedded in
$SU_{2,2n}/S(U_2U_{2n})$ as a totally geodesic submanifold. Rigidity
of totally geodesic submanifolds implies that $M$ is an open part of
the tube with radius $r$ around ${\mathbb H}H^n$  in
$SU_{2,2n}/S(U_2U_{2n})$.

Now assume that $M$ satisfies property (ii) in Proposition
\ref{pccase2}. As above we define $c_p,F,X_Z,E_X$, and we get
$$
Z_X(t) = \begin{cases} E_X(t) & ,\ {\rm if}\ X \in T_\gamma,\\
\exp\left(-\frac{1}{\sqrt{2}}t\right)E_X(t) & ,\ {\rm if}\ X \in
T_\lambda,\\
\exp\left(-\sqrt{2}t\right)E_X(t) & ,\ {\rm if}\ X \in T_\alpha
\end{cases}
$$
for all $t \in {\mathbb R}$. Now consider a geodesic variation in
$SU_{2,m}/S(U_2U_m)$ consisting of geodesics $c_p$. The
corresponding Jacobi field is a linear combination of the three
types of the Jacobi fields $Z_X$ listed above, and hence its length
remains bounded when $t \to \infty$. This shows that all geodesics
$c_p$ in $SU_{2,m}/S(U_2U_m)$ are asymptotic to each other and hence
determine a singular point $z \in SU_{2,m}/S(U_2U_m)(\infty)$ at
infinity. Therefore $M$ is an integral manifold of the distribution
on $SU_{2,m}/S(U_2U_m)$ given by the orthogonal complements of the
tangent vectors of the geodesics in the asymptote class $z$. This
distribution is integrable and the maximal leaves are the
horospheres in $SU_{2,m}/S(U_2U_m)$ whose center at infinity is $z$.
Uniqueness of integral manifolds of integrable distributions finally
implies that $M$ is an open part of a horosphere in
$SU_{2,m}/S(U_2U_m)$ whose center is the singular point $z$ at
infinity.

Altogether we have now proved the following result:

\begin{thm} \label{resultcase2}
Let $M$ be a connected hypersurface in $SU_{2,m}/S(U_2U_m)$, $m \geq
2$. Assume that the maximal complex subbundle ${\mathcal C}$ of $TM$
and the maximal quaternionic subbundle ${\mathcal Q}$ of $TM$ are
both invariant under the shape operator of $M$. If $JN \perp
{\mathfrak J}N$, then one of the following statements holds:
\begin{itemize}
\item[(i)] $M$ is an open part of a tube around a totally geodesic ${\mathbb H}H^n$ in
$SU_{2,2n}/S(U_2U_{2n})$, $m = 2n$;
\item [(ii)] $M$ is an open part of a horosphere in $SU_{2,m}/S(U_2U_m)$
whose center at infinity is singular and of type $JN \perp
{\mathfrak J}N$;
\item[(iii)] $M$ has at least four distinct principal curvatures,
three of which are given by
$$
\alpha = \beta = \sqrt{2}\ ,\ \gamma = 0\ ,\ \lambda =
\frac{1}{\sqrt{2}}
$$
with corresponding principal curvature spaces
$$
T_\alpha = ({\mathcal C} \cap {\mathcal Q})^\perp\ ,\ T_\gamma =
J{\mathcal Q}^\perp\ ,\ T_\lambda \subset {\mathcal C} \cap {\mathcal Q} \cap J{\mathcal Q}.
$$
If $\mu$ is another (possibly nonconstant) principal
curvature function  , then $JT_{\mu} \subset T_\lambda$
and ${\mathfrak J}T_{\mu} \subset T_\lambda$.
\end{itemize}
\end{thm}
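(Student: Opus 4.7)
The plan is to deduce Theorem \ref{resultcase2} from Proposition \ref{pccase2} by giving a geometric interpretation to each of the three algebraic cases listed there. Proposition \ref{pccase2} has already enumerated all possibilities for the principal curvature data of $M$ under the standing assumptions (${\mathcal C}$ and ${\mathcal Q}$ both $A$-invariant, together with $JN \perp {\mathfrak J}N$), so it remains only to show that case (i) of the proposition yields a tube around a totally geodesic ${\mathbb H}H^n$, that case (ii) yields a horosphere with singular center of type $JN \perp {\mathfrak J}N$, and that case (iii) coincides verbatim with case (iii) of the theorem.

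For case (i) of Proposition \ref{pccase2}, the natural tool is the endpoint map $F \colon M \to SU_{2,2n}/S(U_2U_{2n})$ defined by $F(p) = c_p(r)$, where $c_p$ is the geodesic with $\dot{c}_p(0) = N_p$ and $r$ is the parameter implicit in the principal curvatures $\alpha = \sqrt{2}\tanh(\sqrt{2}r)$, etc. Using the explicit eigendata of the Jacobi operator $R_N$ from Table \ref{Jacobi} for vectors of type $JN \perp {\mathfrak J}N$, the Jacobi field $Z_X$ along $c_p$ with $Z_X(0) = X$, $Z_X^\prime(0) = -AX$ is an explicit scalar multiple of the parallel translate $E_X$ of $X$, and the scalar factor vanishes at $t = r$ precisely on $T_\beta$ and $T_{\lambda_2}$. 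Hence $\ker d_pF = {\mathcal Q}^\perp(p) \oplus T_{\lambda_2}(p)$ and $F$ has constant rank $4n$, so locally $F$ submerses onto a $4n$-dimensional submanifold $B$ whose tangent space at $F(p)$ is the parallel translate of $T_\alpha \oplus T_\gamma \oplus T_{\lambda_1} = {\mathbb H}\xi \oplus T_{\lambda_1}$. This subspace is invariant under both $J$ and ${\mathfrak J}$, so $B$ is a quaternionic and $J$-invariant submanifold of $SU_{2,2n}/S(U_2U_{2n})$; since every quaternionic submanifold of a quaternionic K\"{a}hler manifold is automatically totally geodesic, $B$ is totally geodesic. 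Via duality with the compact dual and the known classification of totally geodesic submanifolds in complex $2$-plane Grassmannians, $B$ must be an open part of ${\mathbb H}H^n$, and rigidity forces $M$ to be an open part of the tube of radius $r$ around it.

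For case (ii) of Proposition \ref{pccase2}, the principal curvatures $\alpha = \beta = \sqrt{2}$, $\gamma = 0$, $\lambda = 1/\sqrt{2}$ are nonnegative constants. The same Jacobi field formulas show that along every $c_p$ the Jacobi fields $Z_X$ remain bounded as $t \to \infty$; hence the geodesics $c_p$ are pairwise asymptotic and determine a common point $z \in SU_{2,m}/S(U_2U_m)(\infty)$. Because every $\dot{c}_p(0) = N_p$ is singular of type $JN \perp {\mathfrak J}N$, and singularity type is constant along a geodesic and invariant within an asymptote class (as recalled in Section \ref{horo}), $z$ is singular of the same type. The orthogonal distribution to the asymptote class of $z$ is integrable with maximal leaves the horospheres centered at $z$, so by uniqueness of integral manifolds $M$ is an open part of such a horosphere. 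Case (iii) of Proposition \ref{pccase2} is identical to case (iii) of Theorem \ref{resultcase2}, so no further argument is needed.

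The main obstacle in this program is the identification of $B$ in case (i) as ${\mathbb H}H^n$. The intrinsic data available from Jacobi field analysis---totally geodesic, quaternionic, $J$-invariant, of real dimension $4n$ inside the rank-two symmetric space $SU_{2,2n}/S(U_2U_{2n})$---does not by itself single out the hyperbolic space; the argument must invoke the classification of totally geodesic submanifolds in complex $2$-plane Grassmannians, imported via duality from the compact dual. All other steps are either algebraic manipulations already encapsulated in Propositions \ref{pcC}, \ref{pcQ} and \ref{pccase2}, or standard Jacobi-field calculations.
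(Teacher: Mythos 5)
Your overall strategy is exactly the paper's: read off the three algebraic cases from Proposition \ref{pccase2}, identify case (i) via the endpoint map $F(p)=c_p(r)$ and the classification of totally geodesic submanifolds imported by duality from the compact dual, identify case (ii) via boundedness of the Jacobi fields and uniqueness of integral manifolds of the horosphere distribution, and pass case (iii) through unchanged. The kernel computation, the constant-rank argument, Gray's theorem that quaternionic submanifolds of quaternionic K\"{a}hler manifolds are totally geodesic, and the rigidity step are all as in the paper.

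There is, however, one concrete error in your case (i): the subspace $(T_\alpha \oplus T_\gamma \oplus T_{\lambda_1})(p) = ({\mathbb H}\xi \oplus T_{\lambda_1})(p)$ is \emph{not} invariant under $J$. Indeed $J({\mathbb H}\xi) = J({\mathbb R}JN \oplus {\mathfrak J}JN) = {\mathbb R}N \oplus {\mathfrak J}N$, which is spanned by the normal vector and $T_\beta$, and by Proposition \ref{pccase2}(i) the operator $J$ maps $T_{\lambda_1}$ onto $T_{\lambda_2}$; both images lie in $\ker dF$ plus the normal direction. So the tangent space of $B$ is quaternionic with respect to ${\mathfrak J}$ but \emph{totally real} with respect to $J$ (this is what the paper means by ``real (with respect to $J$)''), not $J$-invariant. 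This is not a cosmetic point: among the $4n$-dimensional totally geodesic quaternionic submanifolds of $SU_{2,2n}/S(U_2U_{2n})$ appearing in Klein's classification there is also a $J$-invariant one (a complex two-plane Grassmannian of lower rank), and it is precisely the total reality of $T_{F(p)}B$ with respect to $J$ that singles out ${\mathbb H}H^n$. With the property as you stated it, the classification step would point to the wrong submanifold; with the corrected property the argument goes through exactly as in the paper.
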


We investigated thoroughly case (iii), but failed to establish the existence or non-existence
of a real hypersurface having principal curvatures as described in case (iii). However,
we now conjecture that case (iii) in Theorem \ref{resultcase2} cannot
occur.

\subsection{The case $JN \in {\mathfrak J}N$} \label{case1}

In this subsection we assume that $JN \in {\mathfrak J}N$. There
exists an almost Hermitian structure $J_1 \in {\mathfrak J}$ so that
$JN = J_1N$. We then have
$$
\xi = \xi_1\ ,\ \alpha = \beta_1\ ,\ \phi\xi_2 = -\xi_3\ ,\
\phi\xi_3 = \xi_2\ ,\ \phi{\mathcal Q} \subset {\mathcal Q}\ ,\ {\mathcal Q} \subset {\mathcal C}.
$$
By inserting $X = \xi_2$ (or $X = \xi_3$) into the equation in
Proposition \ref{pcC} we get
\begin{equation} \label{eq2}
2\beta_2\beta_3 - \alpha(\beta_2+\beta_3) + 2 = 0.
\end{equation}

From Propositions \ref{pcC} and \ref{pcQ} we immediately get

\begin{lm} \label{lemmacase2}
Let $X \in {\mathcal Q}$ with $AX = \lambda X$. Then we have
\begin{eqnarray}
(2 \lambda - \alpha)A\phi X & = & (\alpha \lambda - 1) \phi X -
\phi_1X,
\label{eq11} \\
(2 \lambda - \alpha)A\phi_1 X & = & (\alpha \lambda - 1) \phi_1 X -
\phi X, \label{eq12} \\
(2\lambda - \beta_\nu)A\phi_\nu X & = & (\beta_\nu\lambda -
1)\phi_\nu X\ ,\ \nu=2,3. \label{eq13}
\end{eqnarray}
\end{lm}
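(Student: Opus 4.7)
The plan is to specialize Propositions \ref{pcC} and \ref{pcQ} to the present setting, where $JN = J_1N$ (so $\xi = \xi_1$) and $X \in \mathcal{Q}$ with $AX = \lambda X$. First I would record the basic simplifications available here: since $\mathcal{Q}$ is the orthogonal complement in $TM$ of ${\mathfrak J}N = \mathbb{R}\xi_1 \oplus \mathbb{R}\xi_2 \oplus \mathbb{R}\xi_3$, we have $\eta_\nu(X) = 0$ for $\nu = 1,2,3$; and from $\xi = \xi_1$ we have $\eta(\xi_1) = 1$ and $\eta(\xi_2) = \eta(\xi_3) = 0$. A short quaternionic calculation also gives $\eta(\phi_\nu X) = g(J_\nu X, \xi_1) = -g(X, J_\nu \xi_1) = 0$, because $J_\nu \xi_1 = -J_\nu J_1 N$ lies in $\mathbb{R}N \oplus \mathbb{R}\xi_2 \oplus \mathbb{R}\xi_3$ (using $J_\nu J_{\nu+1}=J_{\nu+2}$) and $X$ is orthogonal to every tangent component of this space.

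For \eqref{eq11}, I would substitute these vanishings into the right-hand side of the identity in Proposition \ref{pcC}, which applies because $\mathcal{Q} \subset \mathcal{C}$. All four sums collapse: the $\nu=2,3$ terms die by $\eta(\xi_\nu)=0$ together with $\eta(\phi_\nu X)=\eta_\nu(X)=0$, and the $\nu=1$ contribution simplifies to $\eta(\phi_1 X)\xi - \eta(\phi_1 X)\xi_1 - \phi_1 X = -\phi_1 X$. The resulting identity rearranges immediately to \eqref{eq11}.

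For \eqref{eq12} and \eqref{eq13}, I would apply Proposition \ref{pcQ} with $\nu = 1$ and with $\nu \in \{2,3\}$ respectively. The same substitutions kill every $\eta$-term on the right-hand side; what remains, besides the target $\phi X$- and $\phi_\nu X$-terms, are only the $q_{\nu\pm1}(X)\,\xi_{\nu\pm1}$ contributions. Now I would invoke the fact that $\phi_\nu \mathcal{Q} \subset \mathcal{Q}$ (immediate from $\eta_\nu(X)=0$ and ${\mathfrak J}{\mathcal Q}\subset {\mathcal Q}$), together with the paper's earlier observation $\phi\mathcal{Q} \subset \mathcal{Q}$, so that $A\phi_\nu X$, $\phi_\nu X$, and $\phi X$ all lie in $\mathcal{Q}$. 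The leftover $q$-contributions, on the other hand, lie in $\mathcal{Q}^\perp = \mathbb{R}\xi_1 \oplus \mathbb{R}\xi_2 \oplus \mathbb{R}\xi_3$. Equating the $\mathcal{Q}^\perp$-components of the two sides of Proposition \ref{pcQ} forces the $q$-terms to vanish, and what is left is exactly \eqref{eq12} (for $\nu=1$) or \eqref{eq13} (for $\nu=2,3$).

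The main bookkeeping step — and really the only nontrivial point — is the verification that $\eta(\phi_\nu X)=0$ for every $\nu$ and that $\phi X,\phi_\nu X \in \mathcal{Q}$; both reduce to careful application of the quaternion relations among $J_1,J_2,J_3$ together with $JN = J_1 N$. Once these are in hand, the lemma is a purely mechanical specialization of Propositions \ref{pcC} and \ref{pcQ} combined with one projection onto $\mathcal{Q}^\perp$.
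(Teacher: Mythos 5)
Your proposal is correct and follows exactly the route the paper intends when it says the lemma follows ``immediately'' from Propositions \ref{pcC} and \ref{pcQ}: specialize both identities using $\xi=\xi_1$, $\eta_\nu(X)=\eta(X)=\eta(\phi_\nu X)=0$ and $\eta(\xi_2)=\eta(\xi_3)=0$ for $X\in{\mathcal Q}$, and discard the $q_\nu$-terms by comparing ${\mathcal Q}$- and ${\mathcal Q}^\perp$-components. Your explicit verification that $\phi X,\phi_\nu X\in{\mathcal Q}$ and the projection step are precisely the details the paper leaves to the reader, and they are carried out correctly.
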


By adding equations (\ref{eq11}) and (\ref{eq12}) we get
\begin{equation}
(2 \lambda - \alpha)A(\phi + \phi_1)X = (\alpha \lambda - 2) (\phi +
\phi_1)X, \label{eq14}
\end{equation}
and by subtracting (\ref{eq12}) from (\ref{eq11}) we get
\begin{equation}
(2 \lambda - \alpha)A(\phi - \phi_1)X = \alpha \lambda (\phi -
\phi_1)X. \label{eq15}
\end{equation}

Note that on ${\mathcal Q}$ we have $(\phi \phi_1)^2 = I$ and ${\rm
tr}(\phi\phi_1) = 0$. Let $E_{+1}$ and $E_{-1}$ be the eigenbundles
of $\phi \phi_1|{\mathcal Q}$ with respect to the eigenvalues $+1$ and $-1$
respectively. Then the maximal quaternionic subbundle ${\mathcal Q}$
of $TM$ decomposes orthogonally into the Whitney sum ${\mathcal Q} =
E_{+1} \oplus E_{-1}$, and the rank of both eigenbundles $E_{\pm 1}$
is equal to $2m+2$. We have $X \in E_{+1}$ if and only if $\phi X =
-\phi_1 X$ and $X \in E_{-1}$ if and only if $\phi X = \phi_1 X$.

\begin{lm} \label{2lambdaalpha}
Let $X \in {\mathcal Q}$ with $AX = \lambda X$. If $2\lambda =
\alpha$, then $\lambda = 1$, $\alpha = 2$ and $X \in E_{-1}$.
\end{lm}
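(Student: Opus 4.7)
The plan is to exploit the fact that the coefficient $2\lambda - \alpha$ in front of the shape-operator terms in equations (\ref{eq11}) and (\ref{eq12}) vanishes under the hypothesis $2\lambda = \alpha$. The cleanest route is through the sum-and-difference equations (\ref{eq14}) and (\ref{eq15}): their left-hand sides both vanish, leaving
\[
(\alpha\lambda - 2)(\phi + \phi_1)X = 0, \qquad \alpha\lambda\,(\phi - \phi_1)X = 0.
\]
Because $X \in \mathcal{Q}$ satisfies $\eta(X) = \eta_1(X) = 0$, both $\phi X = JX$ and $\phi_1 X = J_1 X$ are nonzero, so at most one of $(\phi + \phi_1)X$ and $(\phi - \phi_1)X$ can vanish; if both were nonzero we would need $\alpha\lambda = 0$ and $\alpha\lambda = 2$ simultaneously, a contradiction. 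Exactly one of two cases therefore holds: either (Case A) $X \in E_{+1}$, $(\phi + \phi_1)X = 0$, and $\alpha\lambda = 0$, which together with $2\lambda = \alpha$ forces $\alpha = \lambda = 0$; or (Case B) $X \in E_{-1}$, $(\phi - \phi_1)X = 0$, and $\alpha\lambda = 2$, which together with $2\lambda = \alpha$ gives $\lambda^2 = 1$, hence $\lambda = \pm 1$ and $\alpha = \pm 2$.

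The main obstacle is ruling out Case A. Assuming $\alpha = 0$, equation (\ref{eq2}) collapses to $\beta_2\beta_3 = -1$, so $\beta_2$ and $\beta_3$ are both nonzero. Equation (\ref{eq13}) with $\lambda = 0$ then yields $A\phi_\nu X = \beta_\nu^{-1}\phi_\nu X$ for $\nu = 2, 3$. Since $\mathcal{Q}$ is $\mathfrak{J}$-invariant and $\eta_\mu(X) = 0$ for every $\mu$, the vectors $\phi_2 X = J_2 X$ and $\phi_3 X = J_3 X$ are principal curvature vectors lying in $\mathcal{Q}$, so equation (\ref{eq13}) may be applied to them in turn. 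Applying it to $\phi_2 X$ (eigenvalue $1/\beta_2$) with $\nu = 3$, and to $\phi_3 X$ (eigenvalue $1/\beta_3$) with $\nu = 2$, and using the direct identities $\phi_3\phi_2 X = -\phi_1 X$ and $\phi_2\phi_3 X = \phi_1 X$ together with the $E_{+1}$-relation $\phi_1 X = -\phi X$, both consequences simplify to equations for $A\phi X$ in terms of $\phi X$, namely
\[
(2 - \beta_2\beta_3)A\phi X = (\beta_3 - \beta_2)\phi X \quad \text{and} \quad (2 - \beta_2\beta_3)A\phi X = (\beta_2 - \beta_3)\phi X.
\]
Since $\phi X = JX \neq 0$ and $2 - \beta_2\beta_3 = 3$, equating these forces $\beta_2 = \beta_3$, which combined with $\beta_2\beta_3 = -1$ gives $\beta_2^2 = -1$, a contradiction. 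Thus only Case B survives, and after choosing the orientation of $N$ so that $\alpha \geq 0$ I obtain $\alpha = 2$, $\lambda = 1$, and $X \in E_{-1}$, as required.
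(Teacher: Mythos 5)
Your proposal is correct and follows essentially the same route as the paper: the dichotomy from (\ref{eq14}) and (\ref{eq15}) into the cases $X \in E_{+1}$ with $\alpha = \lambda = 0$ and $X \in E_{-1}$ with $\alpha\lambda = 2$, followed by elimination of the first case via $\beta_2\beta_3 = -1$ from (\ref{eq2}) and the two applications of (\ref{eq13}) to $\phi_2X$ and $\phi_3X$ forcing $\beta_2 = \beta_3$. The only cosmetic differences are that you make explicit why $(\phi+\phi_1)X$ and $(\phi-\phi_1)X$ cannot both vanish and phrase the final contradiction in terms of $A\phi X$ rather than $A\phi_1 X$.
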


\begin{proof} From ($\ref{eq14}$) and ($\ref{eq15}$) we see that
one of the following two statements holds:
\begin{itemize}
\item[(i)] $\lambda = 0$, $\alpha = 0$ and $X \in E_{+1}$;
\item[(ii)] $\lambda = 1$, $\alpha = 2$ and $X \in E_{-1}$.
\end{itemize}
In case (ii) we assume without loss of generality that $\alpha \geq
0$. We have to exclude case (i). Assume that $\lambda = 0$, $\alpha
= 0$ and $X \in E_{+1}$. From ($\ref{eq2}$) we get
$\beta_2\beta_3 = -1$, and therefore both $\beta_2$ and $\beta_3$
are nonzero. From ($\ref{eq13}$) we get
$$
A\phi_\nu X = \frac{1}{\beta_\nu}\phi_\nu X\ ,\ \nu = 2,3.
$$
By applying ($\ref{eq13}$) for $\nu = 2$ to $\phi_3 X$ we
obtain
$$ \frac{3}{\beta_3}A\phi_1 X = \left(\frac{2}{\beta_3} - \beta_2\right) A\phi_2\phi_3 X =
\left(\frac{\beta_2}{\beta_3} - 1\right) \phi_2\phi_3 X =
\left(\frac{\beta_2 - \beta_3}{\beta_3}\right)\phi_1X,
$$
and by applying ($\ref{eq13}$) for $\nu = 3$ to $\phi_2 X$
we obtain
$$ \frac{3}{\beta_2}A\phi_1 X = - \left(\frac{2}{\beta_2} - \beta_3\right) A\phi_3\phi_2 X =
- \left(\frac{\beta_3}{\beta_2} - 1\right) \phi_3\phi_2 X =
\left(\frac{\beta_3 - \beta_2}{\beta_2}\right)\phi_1X,
$$
The previous two equations imply $\beta_2 = \beta_3$, which
contradicts $\beta_2\beta_3 = -1$. It follows that case (i) cannot
hold.
\end{proof}

We denote by $\Lambda$ the set of all eigenvalues of $A|{\mathcal
Q}$, and for each $\rho \in \Lambda$ we denote by $T_\rho$ the
corresponding eigenspace.

We will first assume that there exists $\lambda \in \Lambda$ with $2
\lambda = \alpha$. Then we have $\alpha = 2$, $\lambda = 1$ and
$T_\lambda \subset E_{-1}$ according to Lemma \ref{2lambdaalpha}.
Since $\alpha = 2$, (\ref{eq2}) becomes
$$
0 = \beta_2\beta_3 - (\beta_2 + \beta_3) + 1 = (\beta_2 - 1)(\beta_3
- 1).
$$
Therefore we have $\beta_2 = 1$ or $\beta_3 = 1$. Without loss of
generality we may assume that $\beta_2 = 1$. From
(\ref{eq13}) we get $A\phi_2 X = 0$ for all $X \in T_1$. Applying
 (\ref{eq12}) to $\phi_2 X$ and using the fact that $X \in
E_{-1}$ we get $A\phi_3 X = 0$ for all $X \in T_1$. Thus we have
shown that
\begin{equation} \label{eq16}
0 \in \Lambda\ ,\ \phi_2 T_1 \subset T_0\ ,\ \phi_3 T_1 \subset T_0.
\end{equation}
Next, we apply (\ref{eq13}) for $\nu = 2$ to $\phi_3 X$,
which yields $A\phi_1 X = \phi_1 X$, and applying
(\ref{eq13}) for $\nu = 3$ to $\phi_2 X$ gives $\beta_3 A\phi_1 X =
\phi_1 X$. Comparing the previous two equations shows that $\beta_3
= 1$. Thus we have proved that
\begin{equation} \label{eq17}
\beta_2 = \beta_3 = 1\ ,\ \phi_1 T_1 \subset T_1.
\end{equation}
Now we choose $\rho \in \Lambda \setminus \{1\}$ and $Y \in T_\rho$.
From (\ref{eq16}) we know that $\Lambda \setminus \{1\} \neq
\emptyset$. From (\ref{eq13}) and (\ref{eq17}) we get $(2\rho -
1)A\phi_\nu Y = (\rho - 1)\phi_\nu Y$ for $\nu = 2,3$. Since $\rho
\neq 1$ this implies $2\rho \neq 1$ and
\begin{equation} \label{eq18}
\rho^* = \frac{\rho - 1}{2\rho -1} \in \Lambda\ ,\ \phi_2 T_\rho
\subset T_{\rho^*}\ ,\ \phi_3 T_\rho \subset T_{\rho^*}.
\end{equation}
Note that $(\rho^*)^* = \rho$ and $0^* = 1 = \lambda$. Finally, we
apply (\ref{eq13}) for $\nu = 2$ to $\phi_3 Y$ and obtain
$A \phi_1 Y = \rho \phi_1 Y$, and therefore
\begin{equation} \label{eq19}
\phi_1 T_\rho \subset T_\rho.
\end{equation}
From (\ref{eq12}) and (\ref{eq19}) we obtain $(2\rho - 2)\rho\phi_1
Y = (2\rho - 1)\phi_1 Y - \phi Y$ and therefore
$$
\phi Y = (-2\rho^2 + 4\rho - 1)\phi_1Y.
$$
Since $\phi Y$ and $\phi_1 Y$ have the same length, this implies
$$
Y \in E_{\pm 1}\ ,\ -2\rho^2 + 4\rho - 1 = \pm 1.
$$
The equation $-2\rho^2 + 4\rho - 1 = 1$ has $\rho = 1$ as a solution
with multiplicity $2$, and the equation $-2\rho^2 + 4\rho - 1 = -1$
has $\rho = 0$ and $\rho = 2$ as solutions. However, for $\rho = 2$
we would have $ \frac{1}{3} = \rho^* \in \Lambda$ according to
(\ref{eq18}), but since $\frac{1}{3}$ is not a solution of $-2\rho^2
+ 4\rho - 1 = \pm 1$, we can dismiss the case $\rho = 2$. Altogether
we have shown that $\Lambda = \{0,1\}$. From
(\ref{eq16})--(\ref{eq19}) it is clear that $T_0$ and $T_1$ have the
same dimension, and hence $\dim T_0 = \dim T_1 = 2m+2$. Since $T_1
\subset E_{-1}$ and the rank of $E_{-1}$ is $2m+2$, we get $T_1 =
E_{-1}$. From the two orthogonal decomposition ${\mathcal Q} =
E_{+1} \oplus E_{-1} = T_0 \oplus T_1$ we also get $T_0 = E_{+1}$.
Thus we have proved

\begin{prop} \label{2lambdaequalalpha}
Assume that there exists a principal curvature $\lambda \in \Lambda$
with $2 \lambda = \alpha$. Then $M$ has three distinct constant
principal curvatures $0$, $1$ and $2$ with multiplicity $2m+2$,
$2m+4$ and $1$, respectively. The corresponding principal curvature
spaces are $E_{+1}$, $E_{-1} \oplus ({\mathcal C} \ominus {\mathcal Q})$
and ${\mathcal C}^\perp$, respectively.
\end{prop}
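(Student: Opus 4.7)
My plan is to assemble the chain of deductions scattered in the paragraphs preceding the proposition into a single argument. First I invoke Lemma~\ref{2lambdaalpha}, which under the hypothesis $2\lambda = \alpha$ immediately gives $\alpha = 2$, $\lambda = 1$, and $T_1 \subset E_{-1}$. Substituting $\alpha = 2$ into equation (\ref{eq2}) yields the factorization $(\beta_2-1)(\beta_3-1) = 0$, so after a permutation of the canonical local basis $J_1,J_2,J_3$ of ${\mathfrak J}$ I may assume $\beta_2 = 1$.

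Second, I determine the action of the $\phi_\nu$ on $T_1$ by repeatedly exploiting the identities (\ref{eq11})--(\ref{eq13}) of Lemma~\ref{lemmacase2}. Applying (\ref{eq13}) with $\nu = 2$ gives $A\phi_2 X = 0$ for every $X \in T_1$; feeding $\phi_2 X$ back into (\ref{eq12}) and using $\phi X = \phi_1 X$ (valid since $X \in E_{-1}$) produces $A\phi_3 X = 0$; two further symmetric applications of (\ref{eq13}) force $\beta_3 = 1$ and $\phi_1 T_1 \subset T_1$. Thus I obtain both eigenvalues $0,1$ and the $\phi_\nu$-transport rules connecting $T_0$ and $T_1$.

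Third, I rule out all other eigenvalues. For any $\rho \in \Lambda \setminus \{1\}$ and $Y \in T_\rho$, combining (\ref{eq13}) with $\beta_2 = \beta_3 = 1$ produces the involution $\rho \mapsto \rho^* = (\rho-1)/(2\rho-1)$ on $\Lambda$, together with the transport rules $\phi_2 T_\rho,\phi_3 T_\rho \subset T_{\rho^*}$ and $\phi_1 T_\rho \subset T_\rho$. Substituting into (\ref{eq12}) yields the scalar identity $\phi Y = (-2\rho^2 + 4\rho - 1)\phi_1 Y$; since $\phi Y$ and $\phi_1 Y$ have equal length, $-2\rho^2 + 4\rho - 1 = \pm 1$, forcing $\rho \in \{0,1,2\}$. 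The value $\rho = 2$ is then excluded because $\rho^* = 1/3$ would also have to lie in $\Lambda$ yet $1/3$ fails the same norm identity. Hence $\Lambda = \{0,1\}$. I expect this interplay between the involution $\rho \mapsto \rho^*$ and the quadratic norm constraint to be the main obstacle, since it is the only place where the full $E_{\pm 1}$-decomposition is used nontrivially.

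Finally, I close with a dimension count. Since $\phi_2$ and $\phi_3$ are isometric and exchange $T_0$ with $T_1$ inside ${\mathcal Q}$, these eigenbundles have equal rank; together with $T_1 \cap {\mathcal Q} \subset E_{-1}$ and the rank equality of $E_{+1}$ and $E_{-1}$, this forces $T_1 \cap {\mathcal Q} = E_{-1}$ and $T_0 = E_{+1}$. Adjoining the eigenvectors outside ${\mathcal Q}$, namely $\xi \in {\mathcal C}^\perp$ with $A\xi = 2\xi$ and $\xi_2,\xi_3 \in {\mathcal C} \ominus {\mathcal Q}$ with $A\xi_\nu = \xi_\nu$, assembles exactly the principal-curvature decomposition asserted by the proposition.
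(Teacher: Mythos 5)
Your proposal is correct and follows essentially the same route as the paper: Lemma~\ref{2lambdaalpha} to pin down $\alpha=2$, $\lambda=1$, $T_1\subset E_{-1}$; the factorization of (\ref{eq2}) to get $\beta_2=\beta_3=1$; the involution $\rho\mapsto\rho^*=(\rho-1)/(2\rho-1)$ combined with the length identity $\phi Y=(-2\rho^2+4\rho-1)\phi_1 Y$ to force $\Lambda=\{0,1\}$ and exclude $\rho=2$; and the final rank count identifying $T_0=E_{+1}$ and $T_1\cap{\mathcal Q}=E_{-1}$. This matches the paper's argument step for step.
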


We will now assume that $2\lambda \neq \alpha$ for all $\lambda \in
\Lambda$. The linear maps
$$
{\mathcal Q} \to E_{+1}\ ,\ X \mapsto (\phi - \phi_1)X\ \ {\rm and}\
\ {\mathcal Q} \to E_{-1}\ ,\ X \mapsto (\phi + \phi_1)X
$$
are epimorphisms, and according to (\ref{eq14}) and (\ref{eq15})
each of them maps principal curvature vectors in ${\mathcal Q}$
either to $0$ or to a principal curvature vector in $E_{+1}$ resp.\
$E_{-1}$. It follows that there exists a basis of principal
curvature vectors in ${\mathcal Q}$ such that each vector in that
basis is in $E_{+1}$ or in $E_{-1}$. In other words, we have
$$
T_\lambda = (T_\lambda \cap E_{+1}) \oplus (T_\lambda \cap E_{-1})\
{\rm for\ all}\ \lambda \in \Lambda.
$$
From (\ref{eq11}) and the $\phi$-invariance of $E_{\pm 1}$ we get

\begin{lm} \label{lemmacase2refined} Let $\lambda \in \Lambda$. Then we have
\begin{eqnarray}
A\phi X & = & \frac{\alpha \lambda}{2 \lambda - \alpha} \phi X \in
E_{+1} \
{\rm for\ all}\ X \in T_\lambda \cap E_{+1} \label{eq20} \\
A\phi X & = & \frac{\alpha \lambda - 2}{2 \lambda - \alpha} \phi X
\in E_{-1} \ {\rm for\ all}\ X \in T_\lambda \cap E_{-1}
\label{eq21}
\end{eqnarray}
\end{lm}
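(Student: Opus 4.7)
The plan is to read off both formulas directly from equation (\ref{eq11}) once we resolve $\phi_1 X$ in terms of $\phi X$ using the defining property of $E_{\pm 1}$, and to justify separately that $\phi X$ stays in the same eigenbundle of $\phi\phi_1|{\mathcal Q}$ as $X$. We are working under the standing assumption of this part of the proof that $2\lambda \ne \alpha$ for every $\lambda \in \Lambda$, so the factor $(2\lambda - \alpha)$ in (\ref{eq11}) is nonzero and may be divided out.

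First I would verify the $\phi$-invariance of $E_{+1}$ and $E_{-1}$. Recall that $\mathcal Q$ is both $\mathfrak J$-invariant (by definition) and $J$-invariant (because $\mathcal Q \subset \mathcal C$ in the case $JN \in \mathfrak J N$, and $\mathcal C$ is $J$-invariant, while $J$ commutes with each $J_\nu$, so $J \mathcal Q \subset \mathcal C$ remains annihilated by $J_\nu$ only after checking — concretely, for $X \in \mathcal Q$, $J_\nu JX = J J_\nu X \in J\mathcal Q \subset TM$). Since on $\mathcal Q$ we have $\eta(X) = \eta_\nu(X) = 0$, the operators $\phi$ and $\phi_\nu$ restrict to $J$ and $J_\nu$ there. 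Using $JJ_1 = J_1 J$, for $X \in E_{+1}$ (so $\phi X = -\phi_1 X$, i.e.\ $JX = -J_1 X$) one computes $\phi_1(\phi X) = J_1 J X = J J_1 X = -J(JX) = X$, while $\phi(\phi X) = J^2 X = -X$, giving $\phi(\phi X) = -\phi_1(\phi X)$ and hence $\phi X \in E_{+1}$. The analogous computation on $E_{-1}$ shows $\phi(E_{-1}) \subset E_{-1}$.

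With the $\phi$-invariance in hand, the two formulas drop out of (\ref{eq11}). If $X \in T_\lambda \cap E_{+1}$, substituting $\phi_1 X = -\phi X$ into (\ref{eq11}) yields $(2\lambda - \alpha)A\phi X = (\alpha\lambda - 1)\phi X + \phi X = \alpha\lambda\,\phi X$, so $A\phi X = \frac{\alpha\lambda}{2\lambda - \alpha}\phi X$, and $\phi X \in E_{+1}$ by the previous paragraph. If instead $X \in T_\lambda \cap E_{-1}$, substituting $\phi_1 X = \phi X$ into (\ref{eq11}) yields $(2\lambda - \alpha)A\phi X = (\alpha\lambda - 2)\phi X$, so $A\phi X = \frac{\alpha\lambda - 2}{2\lambda - \alpha}\phi X \in E_{-1}$.

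There is no real obstacle here: the only delicate point is the $\phi$-invariance of $E_{\pm 1}$, which hinges on the commutativity of $J$ with the canonical basis of $\mathfrak J$ together with the identification $\phi|_{\mathcal Q} = J|_{\mathcal Q}$ and $\phi_1|_{\mathcal Q} = J_1|_{\mathcal Q}$. Once that is checked, the lemma is just an algebraic rearrangement of (\ref{eq11}), permissible because Lemma \ref{2lambdaalpha} and our case assumption together rule out $2\lambda = \alpha$.
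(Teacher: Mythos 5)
Your proposal is correct and follows the same route the paper takes: the paper derives the lemma ``from (\ref{eq11}) and the $\phi$-invariance of $E_{\pm 1}$,'' and you have simply filled in the substitution $\phi_1 X = \mp\phi X$, the division by $2\lambda-\alpha$ (legitimate under the standing assumption $2\lambda\neq\alpha$ in force at this point), and the verification that $\phi$ preserves $E_{\pm 1}$ via $\phi|_{\mathcal Q}=J|_{\mathcal Q}$, $\phi_1|_{\mathcal Q}=J_1|_{\mathcal Q}$ and $JJ_1=J_1J$. Nothing further is needed.
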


This shows that the cardinality $|\Lambda|$ of $\Lambda$ satisfies
$|\Lambda| \geq 2$. From (\ref{eq13}) we easily get

\begin{lm} \label{lambdabeta}
Let $\lambda \in \Lambda$. If $2\lambda = \beta_\nu$, then ($\lambda
= \frac{1}{\sqrt2}$ and $\beta_\nu = \sqrt{2}$) or ($\lambda =
-\frac{1}{\sqrt2}$ and $\beta_\nu = -\sqrt{2}$). Moreover, if
$\beta_\nu = \sqrt{2}$, then $\frac{1}{\sqrt{2}} \in \Lambda$ and
$\phi_\nu T_\lambda \subset T_{1/\sqrt{2}}$ for all $\lambda \in
\Lambda \setminus \{1/\sqrt{2}\}$. Similarly, if $\beta_\nu =
-\sqrt{2}$, then $-\frac{1}{\sqrt{2}} \in \Lambda$ and $\phi_\nu
T_\lambda \subset T_{-1/\sqrt{2}}$ for all $\lambda \in \Lambda
\setminus \{-1/\sqrt{2}\}$.
\end{lm}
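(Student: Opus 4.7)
The plan is to derive both conclusions directly from equation~(\ref{eq13}), namely
\[
(2\lambda - \beta_\nu)A\phi_\nu X = (\beta_\nu\lambda - 1)\phi_\nu X \qquad (\nu = 2,3),
\]
using only the injectivity of $\phi_\nu$ on ${\mathcal Q}$. This injectivity is immediate: for $X \in {\mathcal Q}$ the image $J_\nu X$ lies in ${\mathfrak J}X \subset TM$, hence $\phi_\nu X = J_\nu X$, and therefore $\phi_\nu X = 0$ forces $X = 0$.

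For the first assertion I would apply~(\ref{eq13}) to a nonzero $X \in T_\lambda \subset {\mathcal Q}$. The hypothesis $2\lambda = \beta_\nu$ collapses the left-hand side to zero, while $\phi_\nu X \neq 0$ on the right-hand side forces $\beta_\nu\lambda = 1$. Together with $\beta_\nu = 2\lambda$ this gives $2\lambda^2 = 1$, so $\lambda = \pm 1/\sqrt{2}$ with matching $\beta_\nu = \pm\sqrt{2}$.

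For the moreover part with $\beta_\nu = \sqrt{2}$, the key observation is the factorization $2\lambda - \sqrt{2} = \sqrt{2}(\sqrt{2}\lambda - 1)$, which rewrites~(\ref{eq13}) as
\[
\sqrt{2}(\sqrt{2}\lambda - 1)A\phi_\nu X = (\sqrt{2}\lambda - 1)\phi_\nu X .
\]
For any $\lambda \in \Lambda \setminus \{1/\sqrt{2}\}$ the scalar $\sqrt{2}\lambda - 1$ is nonzero, and cancelling it yields $A\phi_\nu X = (1/\sqrt{2})\phi_\nu X$, i.e.\ $\phi_\nu T_\lambda \subset T_{1/\sqrt{2}}$. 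To promote this to the existence statement $1/\sqrt{2} \in \Lambda$, I would invoke the cardinality bound $|\Lambda| \geq 2$ established just before the lemma: it supplies some $\lambda \in \Lambda$ distinct from $1/\sqrt{2}$, and then injectivity of $\phi_\nu$ on ${\mathcal Q}$ turns a nonzero $X \in T_\lambda$ into a nonzero element of $T_{1/\sqrt{2}}$. The case $\beta_\nu = -\sqrt{2}$ is symmetric via $2\lambda + \sqrt{2} = \sqrt{2}(\sqrt{2}\lambda + 1)$. There is no genuine obstacle here; the only ``step'' beyond algebra is spotting the factorization of $2\lambda \mp \sqrt{2}$ that makes~(\ref{eq13}) cancel cleanly, which is why the authors introduce the lemma with the phrase \emph{From~(\ref{eq13}) we easily get}.
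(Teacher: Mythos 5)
Your proposal is correct and is exactly the argument the paper intends: the lemma is introduced only with ``From (\ref{eq13}) we easily get'', and your direct algebraic manipulation of (\ref{eq13}), together with the observation that $\phi_\nu$ is injective on ${\mathcal Q}$ (so that $\phi_\nu X\neq 0$ lets you cancel and detect nonzero eigenvectors), is the intended computation. The factorization $2\lambda\mp\sqrt{2}=\sqrt{2}(\sqrt{2}\lambda\mp 1)$ and the use of $|\Lambda|\geq 2$ to produce a nonzero element of $T_{\pm 1/\sqrt{2}}$ are both sound.
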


\begin{lm} \label{lambda23}
Let $\lambda \in \Lambda$, $\nu \in \{2,3\}$, and assume that
$2\lambda \neq \beta_\nu$. Then we have
\begin{equation} \label{eq23}
\lambda_\nu = \frac{\beta_\nu \lambda - 1}{2\lambda - \beta_\nu} \in
\Lambda\ {\rm and}\  \phi_\nu T_\lambda \subset T_{\lambda_\nu}.
\end{equation}
Moreover, if both $2\lambda \neq \beta_2$ and $2\lambda \neq
\beta_3$, then one of the two following statements holds:
\begin{itemize}
\item[(i)] $T_\lambda \subset E_{+1}$ and $2\lambda_2\lambda_3 - \alpha(\lambda_2 + \lambda_3) +
2 = 0$;
\item[(ii)] $T_\lambda \subset E_{-1}$ and $2\lambda_2\lambda_3 - \alpha(\lambda_2 + \lambda_3) =
0$.
\end{itemize}
\end{lm}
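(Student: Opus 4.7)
The first assertion is immediate. Applying equation (\ref{eq13}) to $X \in T_\lambda$ gives $(2\lambda - \beta_\nu)A\phi_\nu X = (\beta_\nu\lambda - 1)\phi_\nu X$, and since $2\lambda \neq \beta_\nu$ we may divide to obtain $A\phi_\nu X = \lambda_\nu\phi_\nu X$ with $\lambda_\nu = \frac{\beta_\nu\lambda - 1}{2\lambda - \beta_\nu}$. For $X \in \mathcal{Q}$ we have $\phi_\nu X = J_\nu X$, so $\phi_\nu X \neq 0$ whenever $X \neq 0$; hence $\lambda_\nu \in \Lambda$ and $\phi_\nu T_\lambda \subset T_{\lambda_\nu}$.

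For the second assertion I use the decomposition $T_\lambda = (T_\lambda \cap E_{+1}) \oplus (T_\lambda \cap E_{-1})$ already established in the text, together with a structural observation on $\mathcal{Q}$: since $J$ commutes with each $J_\nu$ and the quaternion relations $J_1 J_2 = J_3$, $J_2 J_3 = J_1$, $J_3 J_1 = J_2$ hold, the operator $\phi\phi_1 = JJ_1$ commutes with $J_1$ but anticommutes with $J_2$ and $J_3$. Consequently $\phi_1$ preserves the eigenbundles $E_{\pm 1}$, while $\phi_2$ and $\phi_3$ interchange them.

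Now fix a nonzero $X \in T_\lambda \cap E_{+1}$. On $E_{+1}$ one has $JX = -J_1 X$, so a direct calculation yields
$$\phi(\phi_2 X) = JJ_2 X = J_2 JX = -J_2 J_1 X = J_3 X = \phi_3 X, \qquad \phi_1(\phi_2 X) = J_1 J_2 X = \phi_3 X.$$
Substituting $Y = \phi_2 X \in T_{\lambda_2}$ into equation (\ref{eq11}) and invoking $A\phi_3 X = \lambda_3 \phi_3 X$ from the first part, we obtain
$$(2\lambda_2 - \alpha)\lambda_3\, \phi_3 X = (\alpha\lambda_2 - 1)\phi_3 X - \phi_3 X = (\alpha\lambda_2 - 2)\phi_3 X,$$
which rearranges to the case (i) relation $2\lambda_2\lambda_3 - \alpha(\lambda_2 + \lambda_3) + 2 = 0$. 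For $X \in T_\lambda \cap E_{-1}$ the only change is that $JX = +J_1 X$ forces $\phi(\phi_2 X) = -\phi_3 X$, while $\phi_1(\phi_2 X) = \phi_3 X$ is unchanged; feeding this into (\ref{eq11}) gives $(2\lambda_2 - \alpha)\lambda_3 = \alpha\lambda_2$, the case (ii) relation.

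The two relations differ by the additive constant $2$ and cannot hold simultaneously; thus $T_\lambda$ cannot intersect both $E_{+1}$ and $E_{-1}$ nontrivially, which forces $T_\lambda \subset E_{+1}$ or $T_\lambda \subset E_{-1}$ and completes the dichotomy. The only delicate point is tracking the sign in $\phi(\phi_2 X) = \pm\phi_3 X$, since it is precisely this sign flip between the two eigenbundles that produces the $+2$ discrepancy distinguishing cases (i) and (ii).
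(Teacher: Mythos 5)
Your proof is correct and takes essentially the same route as the paper: the first claim is read off from (\ref{eq13}), and the dichotomy comes from substituting $\phi_2 X$ into the coupled $\phi$--$\phi_1$ Codazzi relation ((\ref{eq11}) for you, (\ref{eq12}) in the paper) and exploiting $\phi_1\phi_2=\phi_3$ together with $\phi\phi_2=\phi_2\phi$ on ${\mathcal Q}$. The only difference is organizational and harmless: the paper derives $\phi X=(2\lambda_2\lambda_3-\alpha(\lambda_2+\lambda_3)+1)\phi_1X$ for an arbitrary $X\in T_\lambda$ and deduces $E_{\pm1}$-membership from $\lVert\phi X\rVert=\lVert\phi_1X\rVert$, whereas you invoke the previously established splitting $T_\lambda=(T_\lambda\cap E_{+1})\oplus(T_\lambda\cap E_{-1})$ and treat each summand separately, concluding via the incompatibility of the two resulting relations.
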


\begin{proof}
The first statement follows from (\ref{eq13}). From (\ref{eq12}) we
obtain for $X \in T_\lambda$ that
\begin{eqnarray*} 0 & = & (2\lambda_2 - \alpha)A\phi_1\phi_2 X -
(\alpha\lambda_2 - 1)\phi_1\phi_2 X + \phi\phi_2 X \\
& = & (2\lambda_2 - \alpha)A\phi_3 X - (\alpha\lambda_2 - 1)\phi_3 X
+ \phi_2\phi X \\
& = & (2\lambda_2 - \alpha)\lambda_3 \phi_3 X - (\alpha\lambda_2 -
1)\phi_3 X + \phi_2\phi X \\
& = & (2\lambda_2\lambda_3 - \alpha(\lambda_2 + \lambda_3) +
1)\phi_3 X + \phi_2\phi X\\
& = & -(2\lambda_2\lambda_3 - \alpha(\lambda_2 + \lambda_3) +
1)\phi_2\phi_1 X + \phi_2\phi X.
\end{eqnarray*}
This implies
$$
0 = (2\lambda_2\lambda_3 - \alpha(\lambda_2 + \lambda_3) + 1)\phi_1
X - \phi X,
$$
from which the assertion easily follows.
\end{proof}

\begin{lm} \label{usefullemma}
Let $\lambda,\lambda_2,\lambda_3 \in \Lambda$ and assume that
$\phi_\nu T_\lambda \subset T_{\lambda_\nu}$ for $\nu = 2,3$. If
$2\lambda_3 \neq \beta_2$ and $2\lambda_2 \neq \beta_3$, then at
least one of the following three statements holds:
\begin{itemize}
\item[(i)] $2\lambda^2 = 1$;
\item[(ii)] $\beta_2\beta_3 = 2$;
\item[(iii)] $\beta_2 = \beta_3$.
\end{itemize}
\end{lm}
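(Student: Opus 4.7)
The plan is to use the hypothesis together with the identity $\phi_2\phi_3X=\phi_1X=-\phi_3\phi_2X$ (valid on ${\mathcal Q}$ in this subsection) to express the eigenvalue of $\phi_1X$ in two different ways and equate them.

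First, pick a nonzero $X\in T_\lambda$. If $2\lambda=\beta_\nu$ for some $\nu\in\{2,3\}$, then (\ref{eq13}) yields $(\beta_\nu\lambda-1)\phi_\nu X=0$; since $\phi_\nu X=J_\nu X\neq 0$ this forces $\beta_\nu\lambda=1$, hence $2\lambda^2=1$, which is conclusion (i). So from now on assume $2\lambda\neq\beta_2$ and $2\lambda\neq\beta_3$. Then (\ref{eq13}) together with the hypothesis $\phi_\nu T_\lambda\subset T_{\lambda_\nu}$ forces the explicit formulas
\[
\lambda_2=\frac{\beta_2\lambda-1}{2\lambda-\beta_2},\qquad \lambda_3=\frac{\beta_3\lambda-1}{2\lambda-\beta_3}.
\]

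Next, since $\phi_3X\in T_{\lambda_3}\subset{\mathcal Q}$ and $2\lambda_3\neq\beta_2$, a second application of (\ref{eq13}) to $\phi_3X$ in place of $X$ (with $\nu=2$) shows that $\phi_2\phi_3X$ is an eigenvector of $A$ with eigenvalue $\frac{\beta_2\lambda_3-1}{2\lambda_3-\beta_2}$; symmetrically, $\phi_3\phi_2X$ is an eigenvector with eigenvalue $\frac{\beta_3\lambda_2-1}{2\lambda_2-\beta_3}$. A short calculation using $\eta_\mu(Y)=0$ and $\eta_\mu(J_\nu Y)=0$ for $Y\in{\mathcal Q}$ (exploiting $\xi=\xi_1$) shows that $\phi_\nu\phi_\mu X=J_\nu J_\mu X$, so that $\phi_2\phi_3X=J_1X=\phi_1X$ and $\phi_3\phi_2X=-\phi_1X$. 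Since $\phi_1X=J_1X$ is nonzero, its eigenvalue under $A$ is unique, and therefore
\[
\frac{\beta_2\lambda_3-1}{2\lambda_3-\beta_2}=\frac{\beta_3\lambda_2-1}{2\lambda_2-\beta_3}.
\]

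The conclusion now reduces to pure algebra. Substituting the formulas for $\lambda_2,\lambda_3$ and clearing denominators, and abbreviating $a=\beta_2\beta_3-2$ and $b=\beta_3-\beta_2$, the displayed identity collapses to $ab(2\lambda^2-1)=0$, which is precisely the trichotomy (ii), (iii), (i) of the lemma. The only delicate point is verifying that $\phi_\nu\phi_\mu$ agrees with $J_\nu J_\mu$ on ${\mathcal Q}$, i.e.\ that no normal correction contributes; this is where the singular type $JN\in{\mathfrak J}N$ assumed throughout the subsection is genuinely used, since it ensures ${\mathcal Q}\perp{\mathfrak J}N$ and consequently the vanishing of the required inner products.
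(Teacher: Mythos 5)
Your proof is correct and follows essentially the same route as the paper: apply (\ref{eq13}) a second time to $\phi_3X$ and to $\phi_2X$, use $\phi_2\phi_3X=\phi_1X=-\phi_3\phi_2X$ to get two expressions for the eigenvalue of $\phi_1X$, and equate them to obtain $(2\lambda^2-1)(\beta_2\beta_3-2)(\beta_3-\beta_2)=0$. Your explicit treatment of the degenerate case $2\lambda=\beta_\nu$ (which the paper leaves implicit, since there it forces $2\lambda^2=1$ via Lemma \ref{lambdabeta}) is a small but welcome addition; the only quibble is that the identity $\phi_\nu\phi_\mu=J_\nu J_\mu$ on ${\mathcal Q}$ holds for any hypersurface, not just in the case $JN\in{\mathfrak J}N$ --- the singular type enters instead through the derivation of (\ref{eq13}) itself.
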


\begin{proof}
Let $X \in T_\lambda$. From Lemma \ref{lambda23} we obtain
\begin{eqnarray*}
A \phi_1 X & = & A\phi_2\phi_3 X = \frac{\beta_2\lambda_3 -
1}{2\lambda_3 - \beta_2}\phi_2\phi_3 X = \frac{(\beta_2\beta_3 -
2)\lambda + (\beta_3 - \beta_2)}{(\beta_2\beta_3 - 2) + 2(\beta_3 -
\beta_2)\lambda}\phi_1X; \\
A \phi_1 X & = & -A\phi_3\phi_2 X = -\frac{\beta_3\lambda_2 -
1}{2\lambda_2 - \beta_3}\phi_3\phi_2 X = \frac{(\beta_2\beta_3 -
2)\lambda + (\beta_2 - \beta_3)}{(\beta_2\beta_3 - 2) + 2(\beta_2 -
\beta_3)\lambda}\phi_1X.
\end{eqnarray*}
Comparing these two equations leads to $ 0 = (2\lambda^2 -
1)(\beta_2\beta_3 - 2)(\beta_3 - \beta_2)$, which implies the
assertion.
\end{proof}

\begin{lm} \label{generalcase}
If $\beta_2^2 \neq 2 \neq \beta_3^2$, then $\beta_2 = \beta_3$.
\end{lm}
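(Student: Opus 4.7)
The plan is to argue by contradiction. Assume $\beta_2 \neq \beta_3$ while still $\beta_2^2 \neq 2 \neq \beta_3^2$. Under these assumptions Lemma~\ref{lambdabeta} rules out $2\lambda = \beta_\nu$ for any $\lambda \in \Lambda$ and $\nu \in \{2,3\}$, and applied once more it rules out $2\lambda_\mu = \beta_\nu$ for the derived curvatures of Lemma~\ref{lambda23}; consequently both Lemma~\ref{lambda23} and Lemma~\ref{usefullemma} are in force for every $\lambda \in \Lambda$. Since $\beta_2 \neq \beta_3$, the trichotomy of Lemma~\ref{usefullemma} collapses to: either $\beta_2\beta_3 = 2$, or $2\lambda^2 = 1$ for every $\lambda \in \Lambda$. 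I would split into these two cases.

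In the first case $\beta_2\beta_3 \neq 2$, every $\lambda \in \Lambda$ equals $\pm 1/\sqrt{2}$. A direct calculation with $\lambda_\nu = (\beta_\nu\lambda - 1)/(2\lambda - \beta_\nu)$ shows $\lambda_2 = \lambda_3 = -\lambda$ for $\lambda = \pm 1/\sqrt{2}$, independently of $\beta_2,\beta_3$. Since the kernel of $\phi_\nu$ is ${\mathbb R}\xi_\nu \perp {\mathcal Q}$, the map $\phi_\nu$ is injective on ${\mathcal Q}$; combined with $\phi_\nu T_\lambda \subset T_{\lambda_\nu}$, this shows that whenever one of $\pm 1/\sqrt{2}$ lies in $\Lambda$, so does the other. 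Substituting $\lambda_2 = \lambda_3 = -\lambda$ into the two alternatives of Lemma~\ref{lambda23} yields $\alpha\lambda = -3/2$ (if $T_\lambda \subset E_{+1}$) and $\alpha\lambda = -1/2$ (if $T_\lambda \subset E_{-1}$). Thus $\lambda = 1/\sqrt{2} \in \Lambda$ forces $\alpha \in \{-3/\sqrt{2},-1/\sqrt{2}\}$ while $\lambda = -1/\sqrt{2} \in \Lambda$ forces $\alpha \in \{3/\sqrt{2},1/\sqrt{2}\}$, and these two sets are disjoint, giving a contradiction.

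The harder case is $\beta_2\beta_3 = 2$, where equation (\ref{eq2}) becomes $\alpha(\beta_2 + \beta_3) = 6$. A direct computation using $\beta_2\beta_3 = 2$ gives $\lambda_2\lambda_3 = 1/2$ for every $\lambda \in \Lambda$, so the $E_{+1}$ branch of Lemma~\ref{lambda23} reads $\alpha(\lambda_2 + \lambda_3) = 3$ and the $E_{-1}$ branch reads $\alpha(\lambda_2 + \lambda_3) = 1$. Setting $\lambda = 0$ makes the $E_{-1}$ identity degenerate to $-2 = 0$, which rules out $T_0 \subset E_{-1}$. For $\lambda \neq 0$ in the $E_{+1}$ branch, elimination using $\alpha(\beta_2 + \beta_3) = 6$ reduces the identity to $(\beta_2 + \beta_3)^2 = 8$; together with $\beta_2\beta_3 = 2$ this forces $\beta_2 = \beta_3 = \pm\sqrt{2}$, contradicting both $\beta_2^2 \neq 2$ and $\beta_2 \neq \beta_3$.

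Therefore $T_\lambda \subset E_{-1}$ for every $\lambda \neq 0$ in $\Lambda$, and $T_0 \subset E_{+1}$ whenever $0 \in \Lambda$. Since $E_{+1}$ has positive rank and ${\mathcal Q}$ is the direct sum of the $T_\lambda$, we conclude $0 \in \Lambda$ and $T_0 = E_{+1}$. The final ingredient is that $\phi_\nu$ interchanges $E_{+1}$ and $E_{-1}$: for $X \in E_{-1}$ one computes $\phi\phi_2 X = JJ_2 X = J_2 J_1 X = -J_3 X$ while $\phi_1\phi_2 X = J_3 X$, so $\phi_2 X \in E_{+1}$, and equal ranks upgrade this to a bijection $E_{+1} \leftrightarrow E_{-1}$. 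Thus $\phi_2 T_0 = E_{-1}$ sits inside $T_{1/\beta_2}$, forcing $T_{1/\beta_2} = E_{-1}$ and $\Lambda = \{0,1/\beta_2\}$; the symmetric argument with $\phi_3$ gives $\Lambda = \{0,1/\beta_3\}$, whence $\beta_2 = \beta_3$, a contradiction. The main obstacle I anticipate is the bookkeeping at the end: one needs to carefully combine the $E_{+1}/E_{-1}$ dichotomy provided by Lemma~\ref{lambda23}, the equal ranks of the two eigenbundles, and the formula $\phi_\nu T_\lambda \subset T_{\lambda_\nu}$ to promote set-theoretic inclusions into the equalities that yield the contradiction.
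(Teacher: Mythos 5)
Your proposal is correct, and the first half follows the paper's route while the second half diverges. Your Case 1 (every $\lambda\in\Lambda$ satisfies $2\lambda^2=1$, hence both $\pm\tfrac{1}{\sqrt{2}}\in\Lambda$, forcing $\alpha<0$ and $\alpha>0$ simultaneously) is essentially the paper's opening move, which shows $\Lambda\neq\{\pm\tfrac{1}{\sqrt{2}}\}$ and then invokes Lemma~\ref{usefullemma} to reduce to $\beta_2\beta_3=2$ or $\beta_2=\beta_3$. In the case $\beta_2\beta_3=2$, however, the paper argues quite differently and more economically: from $\alpha(\beta_2+\beta_3)=6$ and $\beta_2\beta_3=2$ it observes that $\beta_2,\beta_3$ are the roots of $\alpha x^2-6x+2\alpha=0$, while $\lambda_2,\lambda_3$ (for any $\lambda$ with $T_\lambda\subset E_{+1}$) satisfy $\lambda_2\lambda_3=\tfrac{1}{2}$ and $\alpha(\lambda_2+\lambda_3)=3$, so that $2\lambda_2,2\lambda_3$ are roots of the \emph{same} quadratic; hence $\beta_2\in\{2\lambda_2,2\lambda_3\}$ and Lemma~\ref{lambdabeta} forces $\beta_2^2=2$, a contradiction. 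You instead eliminate directly in the $E_{+1}$ identity to get $\lambda(48-6(\beta_2+\beta_3)^2)=0$, conclude that every nonzero eigenvalue lives in $E_{-1}$ and $T_0=E_{+1}$, and then use the fact that $\phi_2,\phi_3$ interchange $E_{\pm1}$ together with $\phi_\nu T_0\subset T_{1/\beta_\nu}$ to force $1/\beta_2=1/\beta_3$. Your route is longer and requires the extra structural bookkeeping you anticipate at the end (which you carry out correctly: the injectivity of $\phi_\nu$ on $\mathcal{Q}$, the equal ranks of $E_{\pm1}$, and the fact that eigenspaces of distinct eigenvalues intersect trivially), but it yields as a by-product the full hypothetical eigenspace decomposition $\Lambda=\{0,1/\beta_2\}$, whereas the paper's shared-quadratic trick reaches the contradiction in one step by recycling Lemma~\ref{lambdabeta}. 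Both arguments are sound and rest on the same supporting lemmas.
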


\begin{proof}
If $\lambda = \frac{1}{\sqrt{2}} \in \Lambda$, then Lemma \ref{lambda23} implies
$\lambda_2 = \lambda_3 = -\frac{1}{\sqrt{2}}$ and $\alpha < 0$.
If $\lambda = - \frac{1}{\sqrt{2}} \in \Lambda$, then Lemma \ref{lambda23} implies
$\lambda_2 = \lambda_3 = \frac{1}{\sqrt{2}}$ and $\alpha > 0$.
Thus we have $\Lambda \neq \{\pm \frac{1}{\sqrt{2}}\}$, and it
follows from Lemma \ref{usefullemma} that $\beta_2\beta_3 = 2$ or
$\beta_2 = \beta_3$.

Let us assume that $\beta_2\beta_3 = 2$. From (\ref{eq2}) we obtain
$\alpha(\beta_2 + \beta_3) = 6$ and hence $\alpha \neq 0$. Moreover,
from $\beta_2\beta_3 = 2$ and (\ref{eq2}) we see that $\beta_2$ and
$\beta_3$ are the solutions of the quadratic equation
\begin{equation} \label{quadratic}
\alpha x^2 - 6x + 2\alpha = 0.
\end{equation}
From (\ref{eq23}) we obtain $\lambda_2\lambda_3 = \frac{1}{2}$. If
we choose $\lambda \in \Lambda$ with $T_\lambda \subset E_{+1}$,
Lemma \ref{lambda23} (i) implies that $\lambda_2$ and $\lambda_3$
are the solutions of the quadratic equation
$$
2\alpha x^2 - 6x + \alpha = 0.
$$
It follows that both $2\lambda_2$ and $2\lambda_3$ are solutions of
the quadratic equation (\ref{quadratic}), which means that $\beta_2
= 2\lambda_2$ or $\beta_2 = 2\lambda_3$. In both cases we deduce
$\beta_2^2 = 2$ from Lemma \ref{lambdabeta}, which is a
contradiction to the assumption. Therefore we must have
$\beta_2\beta_3 \neq 2$, and we conclude that $\beta_2 = \beta_3$.
\end{proof}

\begin{lm} \label{uniquelambda}
Assume that there exist $\lambda \in \Lambda$ and $\nu \in \{2,3\}$
such $2\lambda = \beta_\nu$. Then we have $\lambda =
\frac{1}{\sqrt{2}}$, $\beta_2 = \beta_3 = \sqrt{2}$, $\alpha =
\frac{3}{\sqrt{2}}$ and $E_{-1} \subset T_\lambda$.
\end{lm}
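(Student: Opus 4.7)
The plan breaks into two stages. Lemma \ref{lambdabeta} restricts the possibilities for $(\lambda,\beta_\nu)$ to the matched pairs $(1/\sqrt{2},\sqrt{2})$ and $(-1/\sqrt{2},-\sqrt{2})$; reversing the unit normal $N\mapsto -N$ flips all principal curvatures but leaves the structures $\phi$, $\phi_1$ and hence $E_{\pm 1}$ unchanged, so we may assume $\lambda = 1/\sqrt{2}$, $\nu = 2$, and $\beta_2 = \sqrt{2}$. It remains to show $\beta_3 = \sqrt{2}$ (from which $\alpha = 3/\sqrt{2}$ follows immediately from (\ref{eq2})) and the inclusion $E_{-1} \subset T_{1/\sqrt{2}}$.

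Stage 1. Assuming $\beta_3 \neq \sqrt{2}$, fix $X \in T_{1/\sqrt{2}}$. Equation (\ref{eq13}) for $\nu = 3$ yields $\phi_3 X \in T_{-1/\sqrt{2}}$; applying (\ref{eq13}) for $\nu = 2$ to $\phi_3 X$ and invoking the identity $\phi_2\phi_3 = \phi_1$ on ${\mathcal Q}$ then gives $\phi_1 T_{1/\sqrt{2}} \subset T_{1/\sqrt{2}}$. Substituting into (\ref{eq12}) produces $\phi X = (\sqrt{2}\alpha - 2)\phi_1 X$, and length comparison forces $\alpha \in \{3/\sqrt{2},\,1/\sqrt{2}\}$. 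The value $\alpha = 3/\sqrt{2}$ combined with (\ref{eq2}) gives $\beta_3 = \sqrt{2}$, contradicting the supposition. The value $\alpha = 1/\sqrt{2}$ yields $\beta_3 = -\sqrt{2}/3$ and $T_{1/\sqrt{2}} \subset E_{+1}$; this is the real obstacle. To eliminate it, apply Lemma \ref{lambda23} at $-1/\sqrt{2} \in \Lambda$: one computes $\lambda_2 = \lambda_3 = 1/\sqrt{2}$ and checks that only case (ii) is consistent, so $T_{-1/\sqrt{2}} \subset E_{-1}$. Equation (\ref{eq14}) then produces a new eigenvalue $\mu_0 = 5/(3\sqrt{2}) \in \Lambda$ with $T_{\mu_0} \cap E_{-1} \neq \emptyset$ (since $\phi_1$ preserves $E_{-1}$). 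Running Lemma \ref{lambda23} at $\mu_0$ yields $\lambda_2 = 1/\sqrt{2}$, $\lambda_3 = -7/(9\sqrt{2})$, and the discriminating quantity $2\lambda_2\lambda_3 - \alpha(\lambda_2 + \lambda_3) = -8/9$, which equals neither $0$ nor $-2$. Hence neither alternative of Lemma \ref{lambda23} can hold at $\mu_0$, contradicting $T_{\mu_0} \neq 0$.

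Stage 2. Suppose $\mu \in \Lambda$ with $T_\mu \cap E_{-1} \neq \emptyset$ and $\mu \neq 1/\sqrt{2}$. Then $2\mu \neq \beta_2 = \beta_3 = \sqrt{2}$, so Lemma \ref{lambda23} applies and forces $T_\mu \subset E_{-1}$ together with $\lambda_2 = \lambda_3 = \mu^* := (\sqrt{2}\mu - 1)/(2\mu - \sqrt{2})$ and the relation $2(\mu^*)^2 - 2\alpha\mu^* = 0$. Both roots $\mu^* = 0$ and $\mu^* = 3/\sqrt{2}$ of this equation solve backward to $\mu = 1/\sqrt{2}$, contradicting $\mu \neq 1/\sqrt{2}$. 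Therefore $E_{-1} \subset T_{1/\sqrt{2}}$, completing the proof.

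The principal obstacle throughout is the spurious branch $\alpha = 1/\sqrt{2}$, $\beta_3 = -\sqrt{2}/3$ in Stage 1: it is internally consistent at the level of the immediate Codazzi consequences (\ref{eq11})--(\ref{eq13}) and is exposed only after propagating $\phi_1$ on $T_{-1/\sqrt{2}} \cap E_{-1}$ to exhibit the forbidden eigenvalue $\mu_0$. Keeping careful track of the $E_{\pm 1}$ decomposition is essential, since (\ref{eq14}) and (\ref{eq15}) degenerate on the wrong summand.
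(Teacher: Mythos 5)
Your proof is correct, and while it follows the same skeleton as the paper's argument --- normalise via Lemma \ref{lambdabeta} to $\lambda=\frac{1}{\sqrt{2}}$, $\beta_2=\sqrt{2}$, derive a contradiction from the assumption $\beta_3\neq\sqrt{2}$, then establish $E_{-1}\subset T_\lambda$ separately --- the way you close the case $\beta_3\neq\sqrt{2}$ is genuinely different and, as it happens, more robust. The paper first forces $\Lambda=\{\pm\frac{1}{\sqrt{2}}\}$ via Lemma \ref{usefullemma} and then derives its contradiction from the claim that $\rho_3=\frac{\beta_3\rho-1}{2\rho-\beta_3}$ equals $1\notin\Lambda$ at $\rho=-\frac{1}{\sqrt{2}}$; in fact this quotient equals $\frac{1}{\sqrt{2}}$ for every $\beta_3\neq-\sqrt{2}$, which does lie in $\Lambda$, so the printed final step does not by itself exclude the configuration $\Lambda=\{\pm\frac{1}{\sqrt{2}}\}$ with $\beta_3\neq\pm\sqrt{2}$. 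Your route --- applying (\ref{eq13}) twice to get $\phi_1T_{1/\sqrt{2}}\subset T_{1/\sqrt{2}}$, feeding this into (\ref{eq12}) to obtain $\phi X=(\sqrt{2}\alpha-2)\phi_1X$ and hence $\alpha\in\{\frac{3}{\sqrt{2}},\frac{1}{\sqrt{2}}\}$, and then killing the residual branch $\alpha=\frac{1}{\sqrt{2}}$, $\beta_3=-\frac{\sqrt{2}}{3}$ by producing the eigenvalue $\frac{5}{3\sqrt{2}}$ whose associated values $\lambda_2=\frac{1}{\sqrt{2}}$, $\lambda_3=-\frac{7}{9\sqrt{2}}$ give the discriminating quantity $-\frac{8}{9}\notin\{0,-2\}$ and so violate both alternatives of Lemma \ref{lambda23} --- handles exactly the case that survives the paper's computation, and I have verified all of the arithmetic. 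Your Stage 2 replaces the paper's sign clash via (\ref{eq20}) with the quadratic $\mu^*(\mu^*-\alpha)=0$ from Lemma \ref{lambda23}(ii), both of whose roots pull back to $\mu=\frac{1}{\sqrt{2}}$; this is equally valid and slightly cleaner. Two cosmetic points: $T_\mu\cap E_{-1}\neq\emptyset$ should read $T_\mu\cap E_{-1}\neq\{0\}$, and the final inclusion $E_{-1}\subset T_{1/\sqrt{2}}$ rests on the decomposition $E_{-1}=\bigoplus_{\mu\in\Lambda}(T_\mu\cap E_{-1})$ established just before Lemma \ref{lemmacase2refined}, which is worth citing explicitly.
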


\begin{proof}
Without loss of generality we may assume that $2\lambda = \beta_2$.
Using Lemma \ref{lambdabeta} we can also assume that $\lambda =
\frac{1}{\sqrt{2}}$ and $\beta_2 = \sqrt{2}$ (by choosing a suitable
orientation of the normal vector). Inserting $\beta_2 = \sqrt{2}$
into (\ref{eq2}) gives $(2\sqrt{2} - \alpha)\beta_3 = \sqrt{2}\alpha
- 2$. It follows from this equation that $\alpha \neq 2\sqrt{2}$ and
\begin{equation} \label{eq22}
\beta_3 = \frac{\sqrt{2}\alpha - 2}{2\sqrt{2} - \alpha}.
\end{equation}
This implies $\beta_3 \neq -\sqrt{2}$. We first assume that $\beta_3
\neq \sqrt{2} = \beta_2$. Since $|\Lambda| \geq 2$, there exists
$\rho \in \Lambda \setminus \{\lambda\}$, and any such $\rho$
satisfies $2\rho \neq \beta_2$ (since $\rho \neq \lambda$ and
$2\lambda = \beta_2$) and $2\rho \neq \beta_3$ (since $\beta_3 \neq
\pm \sqrt{2}$ and because of Lemma \ref{lambdabeta}). From Lemma
\ref{lambda23} we see that $\phi_\nu T_\rho \subset T_{\rho_\nu}$
with $\rho_2 = \frac{1}{\sqrt{2}} = \lambda$ and $\rho_3 =
\frac{\beta_3\rho - 1}{2\rho - \beta_3} \in \Lambda$. We have
$2\rho_2 = \sqrt{2} = \beta_2 \neq \beta_3$. Therefore, if $2\rho_3
\neq \beta_2$, we deduce $\rho = -\frac{1}{\sqrt{2}}$ from Lemma
\ref{usefullemma} (since $\sqrt{2} = \beta_2 \neq \beta_3$ and $\rho
\neq \lambda$). Otherwise, if $2\rho_3 = \beta_2 = \sqrt{2}$ we get
$\frac{\beta_3\rho - 1}{2\rho - \beta_3} = \rho_3 =
\frac{1}{\sqrt{2}}$ by Lemma \ref{lambdabeta}, which is equivalent
to $(\sqrt{2}\rho + 1)\beta_3 = (\sqrt{2}\rho + 1)\sqrt{2}$. Since
$\beta_3 \neq \sqrt{2}$ this implies $\rho = -\frac{1}{\sqrt{2}}$ as
well. Altogether we conclude that $\Lambda = \{\pm
\frac{1}{\sqrt{2}}\}$. However, if $\rho = -\frac{1}{\sqrt{2}}$, we
have $1 = \frac{\beta_3\rho - 1}{2\rho - \beta_3} = \rho_3 \in
\Lambda$, which is a contradiction. Hence we must have $\beta_3 =
\beta_2 = \sqrt{2}$. From (\ref{eq22}) we then obtain $\alpha =
\frac{3}{\sqrt{2}}$. Assume that there exists $\rho \in \Lambda
\setminus \{\lambda\}$ such that $T_\rho \cap E_{-1} \neq \{0\}$,
and let $0 \neq X \in T_\rho \cap E_{-1}$. From (\ref{eq23}) we
obtain $\phi_2 X \in T_\lambda \cap E_{+1}$ and $\phi_3 X \in
T_\lambda \cap E_{+1}$. Using (\ref{eq20}) we then get
$$A\phi_3 X =  A \phi_1 \phi_2 X = - A \phi \phi_2 X =
\frac{3}{\sqrt{2}}\phi\phi_2 X = - \frac{3}{\sqrt{2}}\phi_1\phi_2 X
= - \frac{3}{\sqrt{2}}\phi_3 X,
$$
which contradicts $\phi_3 X \in T_\lambda$. Thus we conclude that
there exists no $\rho \in \Lambda \setminus \{\lambda\}$ such that
$T_\rho \cap E_{-1} \neq \{0\}$, which means that $E_{-1} \subset
T_\lambda$.
\end{proof}

We will now use the above results to derive some restrictions for
the principal curvatures:

\begin{prop} \label{pccase1}
Let $M$ be a connected hypersurface in $SU_{2,m}/S(U_2U_m)$, $m \geq
2$. Assume that the maximal complex subbundle ${\mathcal C}$ of $TM$
and the maximal quaternionic subbundle ${\mathcal Q}$ of $TM$ are
both invariant under the shape operator of $M$. If $JN \in
{\mathfrak J}N$, then one the following statements holds:
\begin{itemize}
\item[(i)] $M$ has exactly four
distinct constant principal curvatures
\begin{eqnarray*}
\alpha = 2\coth(2r)\ ,\ \beta = \coth(r)\ , \lambda_1 = \tanh(r)\ ,\
\lambda_2 = 0, & &
\end{eqnarray*}
and the corresponding principal curvature spaces are
$$
T_\alpha = {\mathcal C}^\perp\ ,\ T_\beta = {\mathcal C} \ominus {\mathcal Q}
\ ,\ T_{\lambda_1} = E_{-1}\ ,\ T_{\lambda_2} = E_{+1}.
$$
The principal curvature spaces $T_{\lambda_1}$ and $T_{\lambda_2}$
are complex (with respect to $J$) and totally complex (with respect
to ${\mathfrak J}$).
\item[(ii)] $M$ has exactly three distinct constant principal curvatures
$$
\alpha = 2\ ,\ \beta = 1\ ,\ \lambda = 0
$$
with corresponding principal curvature spaces
$$
T_\alpha = {\mathcal C}^\perp\ ,\ T_\beta = ({\mathcal C} \ominus {\mathcal Q})
\oplus E_{-1}\ ,\ T_\lambda = E_{+1}.
$$
\item[(iii)] We have $\alpha = \frac{3}{\sqrt{2}}$, $\beta_2 =
\beta_3 = \sqrt{2}$, $|\Lambda| \geq 2$, $\lambda =
\frac{1}{\sqrt{2}} \in \Lambda$ and $E_{-1} \subset T_\lambda$.
\end{itemize}
\end{prop}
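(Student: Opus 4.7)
The plan is to run a trichotomy on the set $\Lambda$ of principal curvatures of $A|\mathcal{Q}$, driven by whether some $\lambda\in\Lambda$ makes the Codazzi conclusions degenerate; the three branches produce, in order, cases (ii), (iii), and (i) of the proposition. If some $\lambda\in\Lambda$ satisfies $2\lambda=\alpha$, then Proposition~\ref{2lambdaequalalpha} yields case (ii) at once, after identifying $\mathcal{C}^\perp=\mathbb{R}\xi$ and $\mathcal{C}\ominus\mathcal{Q}=\mathbb{R}\xi_2\oplus\mathbb{R}\xi_3$. If instead some $\lambda\in\Lambda$ and $\nu\in\{2,3\}$ satisfy $2\lambda=\beta_\nu$, then Lemma~\ref{uniquelambda} pins down $\alpha=3/\sqrt{2}$, $\beta_2=\beta_3=\sqrt{2}$, $\lambda=1/\sqrt{2}$ and $E_{-1}\subset T_\lambda$; the remaining requirement $|\Lambda|\geq 2$ I would prove by contradiction --- if $\Lambda=\{1/\sqrt{2}\}$ then $E_{+1}\subset T_{1/\sqrt{2}}$, and equation~(\ref{eq20}) applied to any nonzero $X\in E_{+1}$ produces $A\phi X=-(3/\sqrt{2})\phi X$ with $\phi X\neq 0$, a second eigenvalue. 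This delivers case (iii).

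The substantive work lies in the remaining branch, where $2\lambda\neq\alpha,\beta_2,\beta_3$ for every $\lambda\in\Lambda$. The contrapositive of Lemma~\ref{lambdabeta} gives $\beta_\nu^2\neq 2$, Lemma~\ref{generalcase} then produces $\beta_2=\beta_3=:\beta$, and~(\ref{eq2}) reduces to $\beta^2-\alpha\beta+1=0$, so $\alpha=\beta+\beta^{-1}$ and $\beta\neq 0$. For each $\lambda\in\Lambda$ the quantities $\lambda_2$ and $\lambda_3$ of Lemma~\ref{lambda23} coincide with the single value $\lambda^*:=(\beta\lambda-1)/(2\lambda-\beta)$, and inspection of the proof of that lemma shows that the conditions forcing statements (i) and (ii) there differ by an additive constant~$1$, so $T_\lambda\cap E_{+1}$ and $T_\lambda\cap E_{-1}$ cannot both be nonzero. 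Solving $(\lambda^*)^2-\alpha\lambda^*+1=0$ in the $E_{+1}$ case and $\lambda^*(\lambda^*-\alpha)=0$ in the $E_{-1}$ case gives $\lambda^*\in\{\beta,\beta^{-1}\}$ or $\lambda^*\in\{0,\alpha\}$ respectively, hence $\lambda\in\{0,\beta-\beta^{-1},\beta^{-1},\beta^3/(\beta^2+2)\}$.

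The main obstacle is discarding the two spurious candidates $\beta-\beta^{-1}$ and $\beta^3/(\beta^2+2)$. For this I would exploit an involution argument: since $\phi_\nu$ acts as an isomorphism on $\mathcal{Q}$-eigenvectors, $\lambda\in\Lambda$ forces $\lambda^*\in\Lambda$; hence $\beta-\beta^{-1}\in\Lambda$ would force $\beta\in\Lambda$, and $\beta^3/(\beta^2+2)\in\Lambda$ would force $\alpha\in\Lambda$. Feeding the candidates $\beta$ and $\alpha$ themselves back into the admissible list and computing $\beta^*=\beta-\beta^{-1}$, $\alpha^*=\beta^3/(\beta^2+2)$ directly shows neither image lies in $\{0,\beta,\beta^{-1},\alpha\}$ under $\beta^2\neq 2$; the residual possibility $\beta^2=1$, which would give $\alpha=2\beta$, is incompatible with Case~III because it forces $\Lambda\subset\{0\}$ and hence $E_{-1}=0$, a rank contradiction. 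Thus $\Lambda\subset\{0,\beta^{-1}\}$, and the orthogonal splitting $\mathcal{Q}=E_{+1}\oplus E_{-1}$ combined with $T_0\subset E_{+1}$, $T_{\beta^{-1}}\subset E_{-1}$ forces $T_0=E_{+1}$ and $T_{\beta^{-1}}=E_{-1}$.

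To conclude case (i), constancy of $\alpha$ follows by the Hessian symmetry argument of subsection~\ref{case2}: (\ref{gradient}) yields $\mathrm{grad}^\alpha=(\xi\alpha)\xi$, hence $(\xi\alpha)\,g((A\phi+\phi A)X,Y)=0$, and a direct application of (\ref{eq11}) on $X\in T_{\beta^{-1}}=E_{-1}$ gives $(A\phi+\phi A)X=(2/\beta)\phi X\neq 0$, forcing $\xi\alpha=0$ and hence $\alpha$ constant on the connected $M$. Since $\beta^2\neq 1$ we have $|\alpha|>2$, so after orienting the normal so that $\beta>1$ we may write $\beta=\coth r$ for a unique $r>0$; then $\beta^{-1}=\tanh r$ and $\alpha=2\coth(2r)$. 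The eigenspace identifications $T_\alpha=\mathbb{R}\xi=\mathcal{C}^\perp$ and $T_\beta=\mathbb{R}\xi_2\oplus\mathbb{R}\xi_3=\mathcal{C}\ominus\mathcal{Q}$ are immediate from the setup, while the $J$-invariance and totally complex nature of $T_{\lambda_1},T_{\lambda_2}$ follow from $\phi E_{\pm 1}\subset E_{\pm 1}$ and $\phi_2,\phi_3\colon E_{+1}\to E_{-1}$.
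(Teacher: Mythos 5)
Your proposal is correct and follows essentially the same route as the paper's proof: the same trichotomy on whether some $\lambda\in\Lambda$ satisfies $2\lambda=\alpha$, $2\lambda=\beta_\nu$, or neither, invoking Proposition \ref{2lambdaequalalpha} and Lemma \ref{uniquelambda} for the first two branches, and then Lemmas \ref{lambdabeta}, \ref{generalcase} and \ref{lambda23} together with the involution $\lambda\mapsto\lambda^{*}$ and the quadratics (\ref{eq30})--(\ref{eq31}) for the third; your enumeration of the four candidate eigenvalues $\{0,\,\beta-\beta^{-1},\,\beta^{-1},\,\beta^{3}/(\beta^{2}+2)\}$ and exclusion of the two spurious ones is just a mild reorganization of the paper's argument, which starts from an eigenvalue with eigenspace in $E_{-1}$ and rules out $\lambda^{*}=\alpha$. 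The one place where you are no more careful than the paper itself is the final normalization $\beta=\coth r$ rather than $\tanh r$ (reorienting the normal only controls the sign of $\beta$, not whether $\beta>1$), but this imprecision occurs verbatim in the paper's own proof, so it is not a gap you introduced.
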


\begin{proof}
If there exists a principal curvature $\lambda \in \Lambda$ such
that $2\lambda = \beta_\nu$ for some $\nu \in \{2,3\}$, we get
statement (iii) from Lemma \ref{uniquelambda}. We now assume that
$2\lambda \neq \beta_\nu$ for all $\lambda \in \Lambda$ and all $\nu
\in \{2,3\}$. We have to show that $M$ satisfies (i), (ii) or (iv).

If there exists a principal curvature $\lambda \in \Lambda$ with
$2\lambda = \alpha$, we get case (ii) from Proposition
\ref{2lambdaequalalpha}. Thus we can assume that $2\lambda \neq
\alpha$ for all $\lambda \in \Lambda$. Since $2\lambda \neq
\beta_\nu$ for all $\lambda \in \Lambda$, we obtain from Lemma
\ref{lambdabeta} that $\beta_\nu^2 \neq 2$ for $\nu \in \{2,3\}$.
From Lemma \ref{generalcase} we obtain
$\beta_2 = \beta_3 =: \beta$, and (\ref{eq2}) implies
\begin{equation} \label{eq2beta2beta3}
\beta^2 - \alpha\beta + 1 = 0.
\end{equation}
Note that if $\beta$ is a solution of
(\ref{eq2beta2beta3}), then $\frac{1}{\beta}$ is the other solution.
In case $\beta = 1$ (\ref{eq2beta2beta3}) has a root of multiplicity
two. Let $\lambda \in \Lambda$ and $X \in T_\lambda$. Then, using
Lemma \ref{lambda23}, we see that for $\nu \in \{2,3\}$ we have
$A\phi_\nu X = \lambda^*\phi_\nu X$ with
$$ \lambda^* = \frac{\beta\lambda - 1}{2\lambda - \beta}.$$
Note that $(\lambda^*)^* = \lambda$ since $\beta^2 \neq 2$.
Moreover, we have $T_\lambda \subset E_{\pm 1}$ and
\begin{eqnarray}
T_{\lambda^*} \subset E_{-1}\ {\rm and} & \lambda^{*2} -
\alpha\lambda^* + 1 = 0 & {\rm if}\ T_\lambda \subset E_{+1} \label{eq30}\\
T_{\lambda^*} \subset E_{+1}\ {\rm and} & \lambda^*(\lambda^* -
\alpha) = 0 & {\rm if}\ T_\lambda \subset E_{-1} \label{eq31}
\end{eqnarray}
We choose $\lambda \in \Lambda$ such that $T_\lambda \subset
E_{-1}$. From (\ref{eq2beta2beta3}) and (\ref{eq31}) we then obtain
$\lambda^* = 0$ or $\lambda^* = \alpha$. Assume that $\lambda^* =
\alpha$. Then we have $\alpha \in \Lambda$ and $T_\alpha \subset
E_{+1}$. From (\ref{eq2beta2beta3}) and (\ref{eq30}) we then get
$\alpha^* \in \{\beta,\frac{1}{\beta}\}$. The equation $\alpha^* =
\beta$ is equivalent to $\beta^2 - \alpha\beta - 1 = 0$, which
contradicts (\ref{eq2beta2beta3}). The equation $\alpha^* =
\frac{1}{\beta}$ is equivalent to $\alpha(\beta^2 - 2) = 0$. Since
$\beta^2 \neq 2$ this implies $\alpha = 0$, which contradicts
(\ref{eq2beta2beta3}). Therefore $\lambda^* = \alpha$ is impossible,
and we conclude that $\lambda^* = 0$ and hence $\lambda =
\frac{1}{\beta}$. Altogether we conclude that $\Lambda =
\{0,\frac{1}{\beta}\}$, $T_0 = E_{+1}$ and $T_{\frac{1}{\beta}} =
E_{-1}$.

From Equation (\ref{gradient}) we see that the gradient ${\rm
grad}^\alpha$ of $\alpha$ on $M$ satisfies ${\rm grad}^\alpha =
(\xi\alpha)\xi$, and as in the proof of Proposition \ref{pccase2} we
obtain $(\xi\alpha)g((A \phi + \phi A)X ,Y) = 0$ for all vector
fields $X,Y$ on $M$. Since $E_{-1}$ is invariant under both $\phi$
and $A$, we get for all $X \in E_{-1}$ that
$$
0 = (\xi\alpha)g((A \phi + \phi A)X ,\phi X) =
\frac{1}{\beta}(\xi\alpha)g(\phi X,\phi X),
$$
which implies $\xi\alpha = 0$. Therefore ${\rm grad}^\alpha =
(\xi\alpha)\xi= 0$, and since $M$ is connected we conclude that
$\alpha$ is constant.

It follows easily from (\ref{eq2beta2beta3}) that $\alpha^2 \geq 4$.
If $\alpha^2 = 4$, then $\beta = \frac{\alpha}{2}$ and hence
$2\lambda_1 - \alpha = 0$ for $\lambda_1 = \frac{1}{\beta} \in
\Lambda$, which contradicts our assumption that $2\lambda \neq
\alpha$ for all $\lambda \in \Lambda$. Thus we have $\alpha^2 > 4$
and we can write $\alpha = 2\coth(2r)$ for some $r \in {\mathbb
R}_+$ (and possibly changing the orientation of the normal vector).
From (\ref{eq2beta2beta3}) we then obtain $\beta = \coth(r)$ and
therefore $\lambda_1 = \frac{1}{\beta} = \tanh(r)$. Altogether this
shows that statement (i) holds.
\end{proof}

Assume that $M$ satisfies property (i) in Proposition \ref{pccase1}.
For $p \in M$ we denote by $c_p : {\mathbb R} \to
SU_{2,m}/S(U_2U_m)$ the geodesic in $SU_{2,m}/S(U_2U_m)$ with
$c_p(0) = p$ and $\dot{c}_p(0) = N_p$, and define the smooth map
$$
F : M \to SU_{2,m}/S(U_2U_m), p \mapsto c_p(r).
$$
Geometrically, $F$ is the displacement of $M$ at distance $r$ in
direction of the unit normal vector field $N$. For each $p \in M$
the differential $d_pF$ of $F$ at $p$ can be computed using Jacobi
vector fields by means of $d_pF(X) = Z_X(r)$, where $Z_X$ is the
Jacobi vector field along $c_p$ with initial value $Z_X(0) = X$ and
$Z_X^\prime(0) = -AX$. Using the explicit description of the Jacobi
operator $R_N$ given in Table \ref{Jacobi} for the case $JN = J_1N
\in {\mathfrak J}N$ we get
$$
Z_X(r) = \begin{cases} E_X(r) & ,\ {\rm if}\ X \in T_{\lambda_2},\\
(\cosh(r) - \kappa\sinh(r))E_X(r) & ,\ {\rm if}\ X \in T_\kappa\
{\rm and}\ \kappa \in
\{\beta,\lambda_1\},\\
\left(\cosh\left(2r\right) -
\frac{\alpha}{2}\sinh\left(2r\right)\right)E_X(r) & ,\ {\rm if}\ X
\in T_\alpha,
\end{cases}
$$
where $E_X$ denotes the parallel vector field along $c_p$ with
$E_X(0) = X$. This shows that the kernel ${\rm ker}\,dF$ of $dF$ is
given by
$$
{\rm ker}\,dF = T_\alpha \oplus T_\beta = {\mathfrak J}N = {\mathcal Q}^\perp,
$$
and that $F$ is of constant rank equal to the rank of the
quaternionic vector bundle ${\mathcal Q}$, which is equal to
$4(m-1)$. Thus, locally, $F$ is a submersion onto a
$4(m-1)$-dimensional submanifold $B$ of $SU_{2,m}/S(U_2U_m)$.
Moreover, the tangent space of $B$ at $F(p)$ is obtained by parallel
translation of $(T_{\lambda_1} \oplus T_{\lambda_2})(p) = {\mathcal
Q}(p)$, which is a quaternionic (with respect to ${\mathfrak J}$)
and complex (with respect to $J$) subspace of
$T_pSU_{2,m}/S(U_2U_m)$. Since both $J$ and ${\mathfrak J}$ are
parallel along $c_p$, also $T_{F(p)}B$ is a quaternionic (with
respect to ${\mathfrak J}$) and complex (with respect to $J$)
subspace of $T_{F(p)}SU_{2,m}/S(U_2U_m)$. Thus $B$ is a quaternionic
and complex submanifold of $SU_{2,m}/S(U_2U_m)$. Since every
quaternionic submanifold of a quaternionic K\"{a}hler manifold is
necessarily totally geodesic (see e.g.\ \cite{G}), we see that $B$
is a totally geodesic submanifold of $SU_{2,m}/S(U_2U_m)$.
Using the concept of duality between the symmetric spaces
$SU_{2,m}/S(U_2U_m)$ and $SU_{2+m}/S(U_2U_m)$, it follows from the
classification of totally geodesic submanifolds in complex
$2$-plane Grassmannians (see \cite{K}), that $B$ is an open part of
$SU_{2,m-1}/S(U_2U_{m-1})$ embedded in $SU_{2,m}/S(U_2U_m)$ as a
totally geodesic submanifold. Rigidity of totally geodesic
submanifolds implies that $M$ is an open part of the tube with
radius $r$ around $SU_{2,m-1}/S(U_2U_{m-1})$  in
$SU_{2,m}/S(U_2U_m)$.

Now assume that $M$ satisfies property (ii) in Proposition
\ref{pccase1}. As above we define $c_p,F,X_Z,E_X$, and we get
$$
Z_X(t) = \begin{cases} E_X(t) & ,\ {\rm if}\ X \in T_\lambda,\\
\exp(-t)E_X(t) & ,\ {\rm if}\ X \in
T_\beta,\\
\exp(-2t)E_X(t) & ,\ {\rm if}\ X \in T_\alpha
\end{cases}
$$
for all $t \in {\mathbb R}$. Now consider a geodesic variation in
$SU_{2,m}/S(U_2U_m)$ consisting of geodesics $c_p$. The
corresponding Jacobi field is a linear combination of the three
types of the Jacobi fields $Z_X$ listed above, and hence its length
remains bounded when $t \to \infty$. This shows that all geodesics
$c_p$ in $SU_{2,m}/S(U_2U_m)$ are asymptotic to each other and hence
determine a singular point $z \in SU_{2,m}/S(U_2U_m)(\infty)$ at
infinity. Therefore $M$ is an integral manifold of the distribution
on $SU_{2,m}/S(U_2U_m)$ given by the orthogonal complements of the
tangent vectors of the geodesics in the asymptote class $z$. This
distribution is integrable and the maximal leaves are the
horospheres in $SU_{2,m}/S(U_2U_m)$ whose center at infinity is $z$.
Uniqueness of integral manifolds of integrable distributions finally
implies that $M$ is a open part of a horosphere in
$SU_{2,m}/S(U_2U_m)$ whose center is the singular point $z$ at
infinity.

Finally, assume that $M$ satisfies property (iii) in Proposition \ref{pccase1}.
Let $t \in {\mathbb R}_+$ such that $\coth(t) =
\sqrt{2} = \beta$. Then we have $\alpha = \frac{3}{\sqrt{2}} = 2\coth(2t)$ and
$\lambda = \frac{1}{\sqrt{2}} = \tanh(t)$.
As above we define $c_p,F,X_Z,E_X$.
Since $M$ is a hypersurface, also $M_r =
F(M)$ is (locally) a hypersurface for sufficiently small $r \in {\mathbb R}_+$.
The tangent vector $\dot{c}_p(r)$ is a unit normal vector of $M_r$ at $c_p(r)$.
Since $\dot{c}_p(0) = N_p$ is a singular tangent vector of type $JX \in {\mathcal J}X$,
every tangent vector of $c_p$ is singular and of type $JX \in {\mathcal J}X$.
The tangent space $T_{c_p(r)}M_r$ of $M_r$ at $c_p(r)$ is obtained by
parallel translation of $T_pM$ along $c_p$ from $c_p(0)$ to $c_p(r)$.
We denote by ${\mathcal C}_r$ the maximal complex subbundle of $TM_r$ and by
${\mathcal Q}_r$ the maximal quaternionic subbundle of $TM_r$.
Let $A_r$ be the shape operator of $M_r$ with respect to $\dot{c}_p(r)$.
For $X \in T_\alpha$ we have $A_rZ_X(r) = -Z_X^\prime(r)$ with
$$
Z_X(r) = \left(\cosh(2r) - \coth(2t)\sinh(2r)\right)E_X(r).
$$
It follows that $E_X(r)$ is a principal curvature vector of $M_r$ with corresponding
principal curvature
$$
\alpha_r = -\frac{2\sinh(2r) - 2\coth(2t)\cosh(2r)}{\cosh(2r) - \coth(2t)\sinh(2r)} = 2\coth(2(r+t)).
$$
Since $T_\alpha = {\mathcal C}^\perp = {\mathbb R}JN$ and $J$ is parallel, we
see that $({\mathcal C}_r)^\perp$, and hence also ${\mathcal C}_r$, are invariant under the shape
operator of $M_r$. For $X \in T_\beta$ we have $A_rZ_X(r) = -Z_X^\prime(r)$ with
$$
Z_X(r) = \left(\cosh(r) - \coth(t)\sinh(r)\right)E_X(r).
$$
Since $T_\beta = {\mathcal C} \ominus {\mathcal Q}$ and both $J$ and ${\mathcal J}$
are parallel, we conclude that ${\mathcal C}_r \ominus {\mathcal Q}_r$ is invariant under $A_r$.
Altogether this implies that ${\mathcal Q}_r$ is invariant under the shape operator of $M_r$.
We thus have proved that $M_r$ satisfies the assumptions of Proposition \ref{pccase1}.
It is easy to see that $\alpha_r \notin \{\frac{3}{\sqrt{2}},2\}$ for $r \in {\mathbb R}_+$, and hence
the principal curvatures of $M_r$ must satisfy (i) in Proposition \ref{pccase1}.
Therefore $M_r$ is an open part of a tube with radius $r+t$ around a totally geodesic $SU_{2,m-1}/S(U_2U_{m-1})$ in
$SU_{2,m}/S(U_2U_m)$. This implies that $M$ is an open part of a tube with radius $t$ around
a totally geodesic $SU_{2,m-1}/S(U_2U_{m-1})$ in $SU_{2,m}/S(U_2U_m)$.

Altogether we have now proved the following result:

\begin{thm} \label{resultcase1}
Let $M$ be a connected hypersurface in $SU_{2,m}/S(U_2U_m)$, $m \geq
2$. Assume that the maximal complex subbundle ${\mathcal C}$ of $TM$
and the maximal quaternionic subbundle ${\mathcal Q}$ of $TM$ are
both invariant under the shape operator of $M$. If the normal bundle of $M$
consists of singular tangent vectors of type $JX \in
{\mathfrak J}X$, then one the following statements holds:
\begin{itemize}
\item[(i)] $M$ is an open part of a tube around a totally geodesic $SU_{2,m-1}/S(U_2U_{m-1})$ in
$SU_{2,m}/S(U_2U_m)$;
\item [(ii)] $M$ is an open part of a horosphere in $SU_{2,m}/S(U_2U_m)$
whose center at infinity is singular and of type $JX \in {\mathfrak
J}X$.
\end{itemize}
\end{thm}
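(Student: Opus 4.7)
The plan is to start from Proposition \ref{pccase1}, which has already reduced the situation to three possibilities (i)--(iii) for the principal curvature data, and then to show that the first two correspond geometrically to a tube around $SU_{2,m-1}/S(U_2U_{m-1})$ and a horosphere respectively, while the third can be reduced to the first by moving a short distance in the normal direction.

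For case (i) of Proposition \ref{pccase1}, I would consider the normal displacement map $F : M \to SU_{2,m}/S(U_2U_m)$ given by $F(p) = c_p(r)$, where $c_p$ is the geodesic with $c_p(0)=p$ and $\dot c_p(0)=N_p$. The differential $d_pF(X) = Z_X(r)$ is computed by the standard Jacobi field $Z_X$ with $Z_X(0)=X$, $Z_X'(0)=-AX$. Using Table \ref{Jacobi} for the case $JN=J_1N\in\mathfrak{J}N$, together with the explicit principal curvatures $\alpha=2\coth(2r)$, $\beta=\coth(r)$, $\lambda_1=\tanh(r)$, $\lambda_2=0$, one checks that the coefficients in front of the parallel translates $E_X(r)$ vanish precisely for $X\in T_\alpha\oplus T_\beta=\mathfrak{J}N=\mathcal{Q}^\perp$, so $\ker dF = \mathcal{Q}^\perp$ and $dF$ has constant rank $4(m-1)$. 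The image $B$ is locally a submanifold whose tangent space is the parallel translate of $\mathcal{Q}$, hence a $J$- and $\mathfrak{J}$-invariant subspace. Since every quaternionic submanifold of a quaternionic K\"{a}hler manifold is totally geodesic, $B$ is totally geodesic. Invoking duality with $SU_{2+m}/S(U_2U_m)$ and the classification of totally geodesic submanifolds of complex $2$-plane Grassmannians from \cite{K}, $B$ must be an open part of $SU_{2,m-1}/S(U_2U_{m-1})$, and by rigidity $M$ is an open part of a tube of radius $r$ around it.

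For case (ii), I would repeat the Jacobi field computation with $\alpha=2$, $\beta=1$, $\lambda=0$, so that
\begin{equation*}
Z_X(t) = \begin{cases} E_X(t), & X\in T_\lambda,\\
e^{-t}E_X(t), & X\in T_\beta,\\
e^{-2t}E_X(t), & X\in T_\alpha.
\end{cases}
\end{equation*}
Every Jacobi field along $c_p$ vanishing at no finite time remains bounded as $t\to\infty$, so all the geodesics $c_p$ are asymptotic to one another and define a common point $z$ at infinity which, by the normal being singular of type $JN\in\mathfrak{J}N$, is singular. Hence $M$ is an integral submanifold of the distribution orthogonal to this asymptote class, and by the usual uniqueness argument $M$ is an open part of the horosphere with center $z$.

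The main obstacle is case (iii), where $\alpha=\tfrac{3}{\sqrt 2}$, $\beta_2=\beta_3=\sqrt 2$, $\lambda=\tfrac{1}{\sqrt 2}\in\Lambda$ and $E_{-1}\subset T_\lambda$, but the structure of $T_\lambda\cap E_{+1}$ is not pinned down. My plan is to show that an infinitesimal normal displacement eliminates the degeneracy. Fix $t\in\mathbb{R}_+$ with $\coth t=\sqrt 2$, so that the exceptional values are $\alpha=2\coth(2t)$ and $\lambda=\tanh t$. For small $r>0$ form $M_r=F(M)$ as before; this is locally a hypersurface with unit normal $\dot c_p(r)$, and because the normal geodesic is singular of the same type at every point, $JN_r\in\mathfrak{J}N_r$ holds on $M_r$. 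Using the Jacobi field formulas from Table \ref{Jacobi}, I would compute that on $T_\alpha$ the new principal curvature is $2\coth(2(r+t))$, on $T_\beta$ it is $\coth(r+t)$, on $E_{-1}\subset T_\lambda$ it is $\tanh(r+t)$, while any vectors in the undetermined part $T_\lambda\cap E_{+1}$ still yield principal curvature $0$. Since $T_\alpha={\mathcal C}^\perp={\mathbb R}JN$ and $T_\beta={\mathcal C}\ominus{\mathcal Q}$ are parallel along $c_p$ and $J$, $\mathfrak{J}$ are parallel, ${\mathcal C}_r$ and ${\mathcal Q}_r$ remain invariant under $A_r$, so Proposition \ref{pccase1} applies to $M_r$. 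For $r>0$ the new value $\alpha_r=2\coth(2(r+t))\notin\{2,\tfrac{3}{\sqrt 2}\}$, ruling out cases (ii) and (iii), so $M_r$ must satisfy case (i) and is therefore an open part of a tube of radius $r+t$ around a totally geodesic $SU_{2,m-1}/S(U_2U_{m-1})$. Retracting back along the geodesics $c_p$ for distance $r$, one obtains that $M$ is an open part of the tube of radius $t$ around the same totally geodesic submanifold. Combining the three cases proves the theorem. The subtle point in the argument will be verifying that the asymptotic estimates and parallel-transport bookkeeping in case (iii) really do identify $M_r$ as satisfying Proposition \ref{pccase1}(i), since otherwise the reduction loop does not close.
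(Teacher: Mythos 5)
Your proposal is correct and follows essentially the same route as the paper: the Jacobi-field displacement map for case (i) leading via \cite{G}, duality and \cite{K} to the tube around $SU_{2,m-1}/S(U_2U_{m-1})$, the bounded-Jacobi-field asymptote argument for case (ii) giving the horosphere, and the normal displacement by $r$ with $\coth(t)=\sqrt{2}$ to push case (iii) into case (i). The only quibble is your incidental claim that vectors in $T_\lambda\cap E_{+1}$ of the displaced hypersurface still have principal curvature $0$ (and that $E_{-1}$ gives $\tanh(r+t)$ rather than $\tanh(t-r)$); these computations are neither correct as stated nor needed, since invariance of ${\mathcal C}_r$ and ${\mathcal Q}_r$ already follows from the behaviour on $T_\alpha$ and $T_\beta$ alone, exactly as in the paper.
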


The main result, Theorem \ref{mainresult}, now follows by combining Proposition \ref{key},
Theorem \ref{resultcase2} and Theorem \ref{resultcase1}.

\end{document}